\documentclass[12pt]{article}
\usepackage{amsmath,amssymb,amsfonts}
\usepackage{amsthm}
\usepackage[mathscr]{euscript}
\usepackage{color}
\usepackage[T1]{fontenc}
\usepackage{graphicx}
\usepackage{eepic}
\usepackage{epic}
\textheight23.8cm
\textwidth17.3cm
\hoffset-2.35cm
\voffset-2cm


\newtheorem{theorem}{Theorem}[section]
\newtheorem{fact}[theorem]{Fact}

\newtheorem{corollary}[theorem]{Corollary}

\newtheorem{example}[theorem]{Example}
\newtheorem{lemma}[theorem]{Lemma}

\newtheorem{proposition}[theorem]{Proposition}

\theoremstyle{definition}
\newtheorem{definition}[theorem]{Definition}
\newtheorem{thmalph}{Theorem}

\newtheorem{probalph}[thmalph]{Problem}

\theoremstyle{remark}
\newtheorem{remark}[theorem]{Remark}

\newcommand{\la}{\mathbb \lambda}

\newcommand{\tpi}{\tilde{\pi}}
\newcommand{\uxi}{\underline{\xi}}

\newcommand{\ux}{\underline{x}}
\newcommand{\uz}{\underline{z}}

\newcommand{\eps}{\epsilon}

\newcommand{\mP}{\mathbb P}
\newcommand{\mS}{\mathbb S}

\newcommand{\mC}{\mathbb C}

\newcommand{\mJ}{\mathbb J}

\newcommand{\lC}{{\cal C}}

\newcommand{\lD}{{\cal D}}

\newcommand{\lN}{{\cal N}}
\newcommand{\lO}{{\cal O}}

\newcommand{\mR}{\mathbb R}
 \newcommand{\mN}{\mathbb N}
\newcommand{\be}{\begin{eqnarray}}
\newcommand{\ee}{\end{eqnarray}}
\newcommand{\bd}{\begin{definition}}
\newcommand{\ed}{\end{definition}}
\newcommand{\br}{\begin{remark}}
\newcommand{\er}{\end{remark}}

\newcommand{\gog}{{\mathfrak g}}
\newcommand{\gom}{{\mathfrak m}}

\newcommand{\gon}{{\mathfrak n}}
\newcommand{\gop}{{\mathfrak p}}

\newcommand{\goh}{{\mathfrak h}}

\newcommand{\gol}{{\mathfrak l}}

\newcommand{\sh}{{\fam2 O}}

\newcommand{\fn}{{\fam2 C}}

\newcommand{\V}{\mathcal{V}}

\newcommand{\bl}{\begin{lemma}}
\newcommand{\el}{\end{lemma}}

\newcommand{\bp}{\begin{picture}}
\newcommand{\ep}{\end{picture}}
\newcommand{\bi}{\begin{itemize}}
\newcommand{\ei}{\end{itemize}}
\newcommand{\bq}{\begin{quotation}}
\newcommand{\eq}{\end{quotation}}
\newcommand{\pa}{\partial}

\newcommand{\Pol}{\operatorname{Pol}}

\newcommand{\ad}{\operatorname{ad}}

\newcommand{\Ind}{\operatorname{Ind}}
\newcommand{\Id}{\operatorname{Id}}
\newcommand{\Hom}{\operatorname{Hom}}

\newcommand{\ch}{{\,\vee}}


\usepackage{hyperref}%

\begin{document}
\date{}
\title{Branching laws for Verma modules and applications in parabolic geometry. I}
\author{ Toshiyuki Kobayashi,
         Bent \O rsted, \\
         Petr Somberg, 
         Vladimir Sou\v cek}

\maketitle

\abstract 

We initiate a new study of differential
operators with symmetries and combine this with the study of
branching laws for Verma modules
 of reductive Lie algebras. 
By the criterion for discretely decomposable
 and multiplicity-free restrictions
 of generalized Verma modules
 [T. Kobayashi,
\href{http://dx.doi.org/10.1007/s00031-012-9180-y}
{Transf. Groups (2012)}],
 we are brought to natural settings
 of parabolic geometries
 for which there exist unique equivariant differential operators
 to submanifolds.  
Then we apply a new method (F-method)
 relying on  the  Fourier transform
 to find singular vectors in generalized Verma modules,
which significantly simplifies
 and generalizes many preceding works.  
In certain cases,
 it also determines the Jordan--H{\"o}lder series
 of the restriction 
 for singular parameters.  
The F-method yields an explicit formula
 of such unique operators,
 for example,
 giving an intrinsic and new proof of
 Juhl's conformally invariant differential operators
[Juhl, \href{http://dx.doi.org/10.1007/978-3-7643-9900-9}
{Progr. Math. 2009}]
 and its generalizations to spinor bundles.
This article is the first in the series,
 and the next ones include
 their extension
 to curved cases
 together with more applications
 of the F-method
 to various settings in parabolic geometries.  
\vskip 0.8pc
{{\bf {Key words}}: F-method, branching law,  conformal geometry, parabolic geometry, 
equivariant differential operator, Verma module, symmetric pairs }.
\vskip 0.8pc
{{\bf {2010 MSC}}: 
Primary 
53A30 
;
Secondary
22E47, 
33C45, 
58J70.  
}

\endabstract

\tableofcontents

\section{Introduction}
 \numberwithin{equation}{section}
 \setcounter{equation}{0}
Let $G'\subset G$ be a pair of real reductive Lie groups. The main objects of this paper are $G'$-equivariant differential operators between two homogeneous vector bundles over two real flag manifolds 
 $N=G'/P'$ and 
 $M=G/P,$ where $N$ is a submanifold of $M$ corresponding to $G'\subset G.$ We provide a new method 
 (\textquotedbl{F-method}\textquotedbl) for constructing explicitly such operators, and demonstrate 
its effectiveness in several concrete examples.

On the algebraic level --- in the dual language of
homomorphisms between (generalized) Verma modules, 
the whole construction is connected
 to natural questions in
representation theory, namely branching laws; 
 no attempts at a systematic approach
to branching laws for Verma modules has been made
 until quite recently,
and our results might be of independent interest
from this point of view. 
Restricting Verma modules
 to reductive subalgebras appears manageable
 at a first glance,
 however, it involves sometimes wild
problems
 such as the effect
 of \textquotedbl{hidden continuous spectrum}\textquotedbl\
 as was revealed in \cite{k2}.  
Nevertheless
 there is a considerably rich family
 of examples
with good behaviour
 such as discretely decomposable
 and multiplicity-free restrictions \cite{k1},
 which are at the same time particularly
important for our geometric purposes.

Some of the operators we construct 
for example, 
 (powers of) the wave operator and
Dirac operator
 appeared previously
 in physics.  
Since a large amount of natural differential operators
 have been already found in parabolic geometries, it is
worth pointing out,
 that the ones treated as a prototype here are
exactly the ones that are the hardest to find by the previous
methods (essentially coming from the BGG resolution).  
Further we extend this prototype
 in two folds
 to arbitrary signatures
 in pseudo-Riemannian manifolds (Theorem \ref{thm:4.3})
 and to Dirac operators (Theorem \ref{thm:5.6})
 by the new method.  
We work primarily in the model case situation
 where
the manifold is a real flag manifold, 
but we see in the second part of the series \cite{KOSS2}
 that (as seen for example in the case of
conformal geometry)  it is both possible and
interesting to extend to the \textquotedbl{curved case}\textquotedbl\ of
manifolds equipped with the corresponding parabolic
geometry.

The results we are going to present are inspired by geometrical
considerations. In particular, they correspond to
differential invariants (of higher order in general)
in the case of models for parabolic geometries.
To be precise, let $G$ be a real reductive Lie group, $P$ a parabolic subgroup of $G,$ and $G'$ a reductive subgroup of $G$ such that $P':=P\cap G'$ is a parabolic subgroup of $G'.$ We consider $G'$-equivariant differential operators acting on sections of homogeneous vector bundles
over homogeneous models over $G'/P'$ and $G/P.$  
In effect, what happens is that initial sections on $G/P$ are differentiated
and then restricted to the submanifolds $G'/P'$, and this combined operation
commutes with the action of  the group $G'$.

Explicit formulae for invariant differential operators
constructed in the paper are described in the simplest possible coordinates, i.e., in the noncompact picture.
In principle, there are methods (based on factorization identities) how to compute explicit form
of the differential operators in compact picture but the work needed to do so is nontrivial. An example of such computation can be found
in \cite[Chapt. 5.2]{ju}.
  
Our language chosen for presenting these results is algebraic, 
however, relies at a stage on certain analytic techniques.   
The first step
 is to 
 translate geometrical problems into branching problems 
 of generalized Verma modules for the Lie algebra of $G$
 induced from $P$
 when restricted to the Lie algebra of $G'$. 
Let us recall
 what is known and what is not known.

The {\it{existence}}
 of equivariant differential operators
 is assured
 by the discrete decomposability (Definition \ref{def:3.1})
 of the restriction in the BGG category ${\mathcal{O}}$.  
A general theory of discretely decomposable restrictions
 in the BGG category
$\mathcal{O}$
 as well as
 in the category of Harish-Chandra modules
 was established
 in \cite{kob, kob2, kob3, k2}. 
Moreover,
 the {\it{uniqueness}} of such operators
 is guaranteed
 by the multiplicity-freeness 
 of the restriction.  
An explicit formula
 of the branching law
 for reductive symmetric pairs $(\gog,\gog')$
 was proved in \cite{k1,k2}, 
 which includes the classical Hua--Kostant--Schmid formula
 and also the decomposition
 of the tensor product
  of two modules
 as special cases.  
A short summary is given in Section \ref{sec:3}
 in a way
 that we need.
These general results help us to single out
 appropriate geometric settings
 for which we could expect to construct natural equivariant operators
 to submanifolds,
 however, 
 we need another idea
 to answer
 the following delicate algebraic problems
 of branching laws, 
 which are closely related
 with our geometric interest
 in finding explicit formulae
 equivariant operators
 in parabolic geometries.  
In what follows,  $M^\gog_\gop(\la)\equiv M^\gog_\gop(F_\la)$ 
denotes the $\gog$-module
 induced from an irreducible finite-dimensional $\gop$-module $F_\la$ with
 highest weight $\la$.
\vskip 2mm
\begin{probalph}
\label{prob:A}
Find precisely
 where irreducible ${\mathfrak {g}}'$-submodules
 are located  in a generalized Verma module $M^\gog_\gop(\la)$ of $\gog$.
\end{probalph}

\vskip 2mm
\begin{probalph}
\label{prob:B}
Find the Jordan--H{\"o}lder series
 of a generalized Verma module
 $M^\gog_\gop(\la)$
 of ${\mathfrak {g}}$
 regarded as a module
 of a reductive subalgebra ${\mathfrak {g}}'$
 by the restriction.  
\end{probalph}

\vskip 2mm

Problem \ref{prob:A} is to ask for an
{explicit description of {\it{${\mathfrak {g}}'$-singular vectors}}
 (i.e., vectors annihilated by the action of the nilpotent radical
of $\gop'=\gog'\cap\gop$, see Section \ref{sec:3}
 for the definition)
 in the generalized Verma module $M^\gog_\gop(\la)$) of $\mathfrak g$,
and in turn, is equivalent to our geometric question,
 namely, to construct equivariant
 differential operators
 explicitly from real flag varieties
 to subvarieties (see Theorem \ref{folkloreg}).  
Problem \ref{prob:B} concerns 
 with the case
 where $M^\gog_\gop(\la)$ is not completely reducible,
 in particular, for non-generic parameter $\lambda$.  
It should be noted that even in the case $\mathfrak{g}=\mathfrak{g}'$,
 Problem \ref{prob:B} is already difficult and unsolved in general.
Furthermore, complete reducibility as a $\gog'$-module is
another thing than complete reducibility as a $\gog$-module, and 
it seems  that Problem \ref{prob:B} has never been studied
 before in the case where ${\mathfrak {g}}' \subsetneqq {\mathfrak {g}}$
 (even for Lie algebras of small dimensions).
The new ingredient of Problem \ref{prob:B} is to understand how non-trivial 
 ${\mathfrak {g}}$-extensions occurring in $M^\gog_\gop(\la)$ behave
 when restricted to the subalgebra ${\mathfrak {g}}'$.  

We are interested in Problems \ref{prob:A} and \ref{prob:B},
 in particular, 
when we know a priori the restriction 
 $M^\gog_\gop(\la)|_{\gog'}$ is isomorphic to  a multiplicity-free direct sum
 of irreducible ${\mathfrak {g}}'$-modules for generic parameter $\lambda$.  

In the category ${\mathcal{O}}$, 
every irreducible ${\mathfrak {g}}'$-submodule
 contains a singular vector,
 and conversely,
 every singular vector generates
 a ${\mathfrak {g}}'$-submodule
 of finite length.  
Thus the structure
 of the set of all singular vectors
 is a key to the above mentioned problems.  
In the case
 of conformal densities,
 singular vectors
 were found by using the recurrence relations
 in certain generalized Verma modules
 by A. Juhl \cite{ju}.  
However,
 it seems hard
 to apply such a combinatorial method
 in a more general setting
 due to its computational complexity.

Our method 
 to attack Problems \ref{prob:A} and \ref{prob:B} is based 
on the \textquotedbl{Fourier transform}\textquotedbl\ of generalized Verma modules;
we call it {\it the F-method}.  
The idea is to characterize
 the set of all singular vectors
 by means of a system
 of partial differential equations
 on the Fourier transform side.  
It was first suggested by T. Kobayashi, March 2010, 
 with a number of new examples.   
In contrast to the existing combinatorial techniques
 to find singular vectors, 
 the F-method is more conceptual.

For example, 
the coefficients
 of Juhl's families of equivariant differential operators for the conformal group
 (see \eqref{acoef})
 coincide with those of the Gegenbauer
 polynomials.  
This was discovered 
 by Juhl \cite{ju},
 but the combinatorial proof there based on recurrence relations
 did not explain the origin of the special functions 
 in formulae.  
Our new method is completely different and explains their appearance
in a natural way (see Section \ref{sec:4}).

The F-method itself is briefly described in Section 2.
The key idea of the F-method is 
 to take the Fourier transform 
 of Verma modules
after realizing them 
 in the space of distributions supported
 at the origin 
 on the flag variety.  
Then we can transfer the algebraic branching problem for generalized Verma modules into an analytic problem,
 to find polynomial solutions
 to a system of partial differential equations. 
In the setting we consider,
 it leads to an ordinary differential equation
(due to symmetry involved).  
The resulting second-order differential equations control
 all the family
 of equivariant differential operators (of arbitrarily high order).
The polynomial solutions are the Fourier transform of 
singular vectors.  
Hence this new method offers a uniform and effective tool
to find explicitly singular vectors in many different cases. 

In Section 3, we discuss a class of branching problems for modules
in the parabolic BGG category $\lO^{\gop}$ having a discrete decomposability property
with respect to reductive subalgebras $\gog'$.  
Moreover, one of our guiding principles is
to focus on multiplicity-free cases which were obtained systematically
 in \cite{k1,k2}
 by two methods
--- 
 by visible actions
 on complex manifolds
 and by purely algebraic methods.  
Branching rules
are given in terms of the Grothendieck group 
of the category $\lO^{\gop}$, and they give
geometric settings
 where we shall apply the F-method.  

The rest of the paper contains applications of the F-method for descriptions of the space of all 
singular vectors in particular cases of conformal geometry. 
It contains a complete answer
 to Problems \ref{prob:A} and \ref{prob:B}
 for generalized Verma modules
 of scalar type in the case
 where $(G,G') = (SO_0(p,q),SO_0(p,q-1))$,
 see Theorems \ref{basis} and \ref{T.3.8},  
respectively.  
The explicit construction
 of equivariant  differential operators
 for the particular examples
 of pseudo-Riemannian manifolds
 of arbitrary signature $(p,q)$
 is given in Theorem \ref{thm:4.3},
 extending a theorem of Juhl.  
A further generalization
 to spinor-valued sections
 is discussed in Section \ref{sec:5}, 
and explicit formulae
 of equivariant differential operators 
 for the conformal group
 are given in Theorem \ref{thm:5.6}
 by using the Dirac operator
 and the coefficients of Gegenbauer polynomials.  
Again the main machinery is the F-method.

As we already emphasized, our original motivation for the study of branching rules 
for generalized Verma modules came from differential geometry.  
In fact,
 there is a substantial relation of the curved version
of the Juhl family and {a notion} of $Q$-curvature and conformally invariant
powers of the Laplace operator. 
In the second part
 of the series \cite{KOSS2}, 
 we construct the curved version of the Juhl family 
 and its generalization 
 by using the result
 of this article
 and the ideas of semi-holonomic Verma modules. 
 
To summarize, we have in this paper 
 provided a new method,
 and some new results concerning the relation
between several important topics in representation theory
 and parabolic geometry, namely branching laws for generalized Verma modules 
and the construction of equivariant  differential operators 
 to submanifolds. 
In the second part of the series, \cite{KOSS2},
 we give further applications
 of the F-method to some other examples
 of parabolic geometries 
with commutative nilradical,
 e.g., 
 the projective geometry, Grassmannian 
geometry and Rankin--Cohen brackets  as an example of branching 
rules for the symmetric pair $(G\times G,\Delta (G)),$ where $
\Delta:G\to G \times G$ is a diagonal embedding, 
 and discuss their \textquotedbl{curved versions}\textquotedbl.  

In the paper, we use the following notation:
 $\mN=\{0,1,2,\cdots \}$, $\mN_+=\{1,2,\cdots \}$.

\section{Problems and methods for their solutions}
\label{sec:2 correction}
 \numberwithin{equation}{section}
 \setcounter{equation}{0}

The first aim of this section is to explain in more details the connection between geometric
and algebraic side of the problem and
a method to find ${\mathfrak {g}}'$-singular vectors in Verma modules
 of ${\mathfrak {g}}$, 
 where ${\mathfrak {g}}' \subset {\mathfrak {g}}$ are
a pair of complex reductive Lie algebras.  The second aim 
is   to discuss the general idea of a new approach
how to describe equations for singular vectors by using Fourier analysis.  
The main advantage of the method, 
which we call \textquotedbl{F-method}\textquotedbl , 
is
 that a combinatorially complicated 
 problem
 of finding singular vectors
 by the existing methods
 is converted to a more conceptual question
 to find polynomial solutions
 of a certain system of partial or ordinary 
differential equations, 
 see \eqref{eqn:phi}. 
Explicit examples showing how the method works   in various situations (related in problems in differential geometry)
 can be found in the latter
  half of the paper, 
 in the second part \cite{KOSS2}
 of the series, and \cite{KP2}. 
 
\subsection{Two dual faces of the problem}
Let $G' \subset G$ be real reductive Lie groups,
 and ${\mathfrak{g}}' \subset {\mathfrak {g}}$
 their complexified Lie algebras.  
In the paper, we are studying two closely related problems.
On the side of geometry, we are going to construct intertwining differential operators between principal series representations
 of the two groups $G$ and $G'$.
On algebraic side, we are going to construct homomorphisms between generalized Verma modules
 of the two Lie algebras ${\mathfrak{g}}'$ and ${\mathfrak {g}}$. 
The relation between these geometric and algebraic sides is 
 classically known when $G' = G$ (see 
 $\cite{BS}$, 
 for instance). 
We generalize it to the case $G' \ne G$
 in connection with branching problems
 in representation theory.  
The treatment here
 is based on recent publications
 \cite{CSS1, CSS2} by using jet bundles.  
A self-contained account (in a more general situation)
 by a different approach
 can be found in \cite[Sect. 2]{KP}.

Let $G$ be a connected real reductive Lie group
 with Lie algebra $\gog({\mathbb{R}})$.  
Let $x \in {\mathfrak {g}}({\mathbb{R}})$ be a hyperbolic element.  
This means that $\ad(x)$ is diagonalizable and its
eigenvalues are all real. 
Then we have the following Gelfand--Naimark decomposition
$$
\gog({\mathbb{R}})
=\gon_-({\mathbb{R}}) + \gol ({\mathbb{R}})+ \gon_+({\mathbb{R}}),
$$
according to the negative, zero,
 and positive eigenvalues of $\ad(x)$.  
The subalgebra $\gop({\mathbb{R}}):=\gol({\mathbb{R}}) + \gon_+({\mathbb{R}})$
 is a parabolic subalgebra of $\gog({\mathbb{R}})$, 
 and its normalizer $P$ in $G$
 is a parabolic subgroup of $G$.  
Subgroups $N_\pm\subset G$ are defined
 by $N_\pm=\exp \gon_\pm({\mathbb{R}}).$

 Given a complex finite-dimensional $P$-module $V$,
 we consider the unnormalized induced representation $\pi$
 of $G$ on the space $\operatorname{Ind}_P^G(V)$
 of smooth sections
 for the homogeneous vector bundle
 ${\mathcal{V}}:=G \times_P V \to G/P$.  
We can identify this space
 with 
$$
\lC^\infty(G,V)^P:=\{f\in\lC^\infty(G,V):f(g\,p)=p^{-1}\cdot f(g),\,g\in G,p\in P\}.
$$
 
Moreover,
we shall also need the space $J^k_e(G,V)^P$ of $k$-jets in $e\in G$
of $P$-equivariant maps and its projective limit
$$
J^\infty_e(G,V)^P={\lim_{\longrightarrow}}{}_{\, k} J^k_e(G,V)^P.
$$

Let $U(\gog)$
 denote the universal enveloping algebra
 of the complexified Lie algebra ${\mathfrak {g}}$
 of ${\mathfrak {g}}({\mathbb{R}})$.  
Let $V^\ch$ denote the contragredient representation.  
Then $V^\ch$ extends to a representation
 of the whole enveloping algebra $U(\gop).$
The generalized Verma module $M^\gog_\gop(V^\ch)$ is  defined by
$$
M^\gog_\gop(V^\ch):= 
U(\gog)\otimes_{U(\gop)}V^\ch.  
$$
It is a well-known fact that there is a non-degenerate invariant pairing between
$J^\infty_e(G,V)^P$  and $M^\gog_\gop(V^\ch).$
In what follows, we present a version better adapted for our needs. All details
 of the proof  of the version of the claim presented below can be found in
\cite[App.1]{CSS2},
 which is an extended version of the paper $\cite{CSS1}.$

\begin{fact}\label{folklore}
Let $G$ be a connected semisimple Lie group
 with complexified Lie algebra $\gog$, 
 and $P$ a parabolic subgroup of $G$
 with complexified Lie algebra $\gop.$
Suppose further that $V$ is a finite-dimensional $P$-module and
$V^\ch$ its dual.
Then there is a $({\mathfrak {g}}, P)$-invariant  
 pairing between  
 $J^\infty_e(G,V)^P$  and   $M^\gog_\gop(V^\ch),$ which identifies
 $M^\gog_\gop(V^\ch)$ with the space of all linear maps from  
 $J^\infty_e(G,V)^P$ to $\mC$ that factor through $J^k_e(G,V)^P$ for some $k.$
 
\end{fact}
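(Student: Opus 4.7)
The plan is to construct the pairing explicitly using the action of $U(\gog)$ as invariant differential operators, and then to identify both sides concretely using the Gelfand--Naimark decomposition together with the PBW theorem.

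First I would define the pairing. The Lie algebra $\gog$ acts on $\lC^\infty(G,V)^P$ by differentiating the left-translation representation, and this extends canonically to a $U(\gog)$-action, written $u \cdot f$. For $u\in U(\gog)$ and $v^\ch\in V^\ch$, set
$$\langle u \otimes v^\ch,\, f\rangle := \langle v^\ch,\, (u \cdot f)(e)\rangle.$$
The first check is well-definedness over $U(\gop)$: for $X \in \gop$, the $P$-equivariance $f(g p) = p^{-1}\cdot f(g)$ forces $(X\cdot f)(e) = -X\cdot f(e)$, where on the right $X$ acts on $V$ through the $\gop$-representation, and this is dual to the $\gop$-action on $V^\ch$. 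Iterating, one obtains $\langle uX\otimes v^\ch,f\rangle=\langle u\otimes X\cdot v^\ch,f\rangle$ for $X\in\gop$, so the pairing descends to $M^\gog_\gop(V^\ch)=U(\gog)\otimes_{U(\gop)}V^\ch$. The $(\gog,P)$-invariance is then immediate from the construction.

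Next I would check factorization through finite jets. Elements of the PBW filtration piece $U^k(\gog)$ act as differential operators of order at most $k$ on $\lC^\infty(G,V)^P$, so $(u \cdot f)(e)$ depends only on the $k$-jet of $f$ at $e$. Hence any $\xi \in M^\gog_\gop(V^\ch)$ of PBW-degree $\le k$ induces a well-defined functional on $J^k_e(G,V)^P$, natural in $k$. To establish the stated identification, I would use PBW to write $M^\gog_\gop(V^\ch)\cong U(\gon_-)\otimes V^\ch$ as filtered vector spaces, and use Gelfand--Naimark to identify $J^\infty_e(G,V)^P$ with $\widehat{S}(\gon_-^*)\otimes V$: since the orbit map $N_- \to G/P$ is a local diffeomorphism at $e$, any $P$-equivariant infinite jet on $G$ at $e$ is uniquely determined by the jet at $0\in\gon_-$ of its restriction to $N_-$, which is a $V$-valued formal Taylor series. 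Under these two identifications, the pairing becomes the natural perfect duality between $U(\gon_-)$ and $\widehat{S}(\gon_-^*)$, tensored with the duality between $V^\ch$ and $V$; a functional of PBW-degree $\le k$ then corresponds exactly to one factoring through the degree-$\le k$ polynomial truncation, i.e., through $J^k_e(G,V)^P$.

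The hard part will be that the reduction to the algebraic duality $U(\gon_-)\times \widehat{S}(\gon_-^*)\to\mC$ is not quite tautological: the PBW isomorphism $U(\gon_-)\cong S(\gon_-)$ is only one of filtered vector spaces, and the action of $u\in U(\gon_-)$ as a right-invariant differential operator on $N_-$ picks up lower-order corrections from the non-abelian bracket on $\gon_-$. One must verify that these lower-order terms are compatible with the PBW filtration on both sides so that the leading symbol controls the pairing at each filtration level; equivalently, that evaluation at $e$ exactly kills the correction terms modulo lower filtration. This bookkeeping is precisely what is carried out in \cite[App.1]{CSS2}, and once it is in place, perfectness of the duality and the identification with functionals factoring through some $J^k_e(G,V)^P$ follow at once.
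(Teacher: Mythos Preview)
Your proposal is correct and follows the standard approach. The paper itself does not give a proof of this fact but simply refers to \cite[App.~1]{CSS2} for all details; your sketch---defining the pairing by $\langle u\otimes v^\ch,f\rangle=\langle v^\ch,(u\cdot f)(e)\rangle$, checking descent to the tensor product over $U(\gop)$, and then identifying both sides via PBW and the open Bruhat cell---is exactly the argument one finds there (and is also the content of the closely related Fact~2.2 in the paper, stated for distributions on $N_-$). Your acknowledgment that the filtration bookkeeping for the non-abelian corrections is the technical core, deferred to \cite{CSS2}, matches the paper's own treatment.
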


The statement above has the following classical corollary
 (see \cite{BS} for instance), which
explains the relation between the geometrical problem of finding  
$G$-equivariant differential operators between induced representations
and the algebraic problem of finding homomorphisms between generalized
Verma modules.

\begin{corollary}
\label{cor:2.2}
Let $V$ and $V'$ be two finite-dimensional $P$-modules.
Then the space of $G$-equivariant differential operators
from $\Ind_P^G(V)$ to  $\Ind_P^G(V')$ is isomorphic to
 the space of $(\gog,P)$-homomorphisms
from $M^\gog_\gop((V')^\ch)$ to $M^\gog_\gop(V^\ch)$.
\end{corollary}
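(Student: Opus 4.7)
The plan is to traverse the chain
\begin{align*}
\{G\text{-equivariant diff.\ ops.}\,\Ind_P^G(V)\to\Ind_P^G(V')\}
&\;\longleftrightarrow\; \Hom_P\!\bigl(J^\infty_e(G,V)^P,\,V'\bigr)_{\mathrm{fin}} \\
&\;\longleftrightarrow\; \Hom_P\!\bigl((V')^{\ch},\,M^\gog_\gop(V^{\ch})\bigr) \\
&\;\longleftrightarrow\; \Hom_{(\gog,P)}\!\bigl(M^\gog_\gop((V')^{\ch}),\,M^\gog_\gop(V^{\ch})\bigr),
\end{align*}
where the subscript ``fin'' means the map factors through $J^k_e(G,V)^P$ for some finite $k$.

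First I would reduce a $G$-equivariant differential operator to a datum at the base point. Since $G$ acts transitively on $G/P$, a $G$-equivariant differential operator $D\colon \Ind_P^G(V)\to\Ind_P^G(V')$ is entirely determined by the value of $Df$ at the identity coset; moreover, by the locality of differential operators, this value depends only on a finite-order jet of $f$ at $e$. Thus $D$ is encoded by a linear map $D_e\colon J^k_e(G,V)^P\to V'$ for some $k$. The transitive $G$-action forces $G$-equivariance of $D$ to be equivalent to $P$-equivariance of $D_e$ (since the stabilizer of $eP$ is $P$). Conversely, any $P$-equivariant jet map $D_e$ extends uniquely to a $G$-equivariant differential operator by translating along $G$. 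This yields the first bijection.

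Next I would pass to the dual picture. A $P$-equivariant linear map $\varphi\colon J^\infty_e(G,V)^P\to V'$ that factors through some $J^k_e$ is, by the canonical isomorphism $\Hom(X,V')\cong\Hom((V')^{\ch},X^*)$, the same datum as a $P$-equivariant map $(V')^{\ch}\to \bigl(J^\infty_e(G,V)^P\bigr)^*_{\mathrm{fin}}$ landing in the finite-order functionals. Here one must verify that $P$-equivariance is preserved under the dualization, but that is a direct consequence of the finite dimensionality of $V'$. At this point Fact~\ref{folklore} applies verbatim: the space of finite-order functionals on $J^\infty_e(G,V)^P$ is canonically identified, as a $(\gog,P)$-module, with $M^\gog_\gop(V^{\ch})$. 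Consequently, $P$-equivariant differential operators correspond bijectively to $P$-homomorphisms $(V')^{\ch}\to M^\gog_\gop(V^{\ch})$.

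Finally I invoke the universal property (Frobenius reciprocity) of the generalized Verma module: for any $\gog$-module $X$, restriction to the generating fiber gives a natural isomorphism
\[
\Hom_{(\gog,P)}\!\bigl(M^\gog_\gop((V')^{\ch}),\,X\bigr)\;\xrightarrow{\sim}\;\Hom_{P}\!\bigl((V')^{\ch},\,X\bigr),
\]
obtained from $M^\gog_\gop((V')^{\ch})=U(\gog)\otimes_{U(\gop)}(V')^{\ch}$. Applying this with $X=M^\gog_\gop(V^{\ch})$ completes the chain.

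The main obstacle, and where care is required, is the compatibility step in the middle: one must check that the $(\gog,P)$-action on $M^\gog_\gop(V^{\ch})$ furnished by Fact~\ref{folklore} is the \emph{same} as the one appearing on the right side of the final Frobenius isomorphism. Concretely, the action of $U(\gon_-)$ on the Verma module must match the action on finite-order functionals dual to the action of $U(\gon_-)$ on jets at $e$ (with the usual sign/transpose conventions). Once this identification of module structures is pinned down — which is precisely the content of Fact~\ref{folklore} as stated — the isomorphism of the corollary follows formally from composing the three bijections above.
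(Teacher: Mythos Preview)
Your proposal is correct and follows essentially the same three-step approach the paper uses (in the proof of the more general Theorem~\ref{folkloreg}, of which Corollary~\ref{cor:2.2} is the special case $G'=G$, $P'=P$): reduce to $P$-equivariant jet maps $J^\infty_e(G,V)^P\to V'$ factoring through some finite $J^k_e$, dualize via the pairing of Fact~\ref{folklore} to land in $\Hom_P((V')^\ch,M^\gog_\gop(V^\ch))$, then apply Frobenius reciprocity. The only cosmetic difference is that the paper phrases the middle step as an identification with $(M^\gog_\gop(V^\ch)\otimes V')^{P}$ rather than via $\Hom(X,V')\cong\Hom((V')^\ch,X^*)$, but these are the same thing.
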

 
We now discuss a generalization of Corollary \ref{cor:2.2}
 to the 
case of homogeneous vector bundles over {\it different} flag manifolds, which is formulated and proved below.
Suppose that, in addition to the pair $P\subset G$ used
above, we  consider another pair $P'\subset G',$
 such that $G'$ is a reductive
subgroup of $G$ and $P'=P \cap G'$ is a parabolic subgroup of $G'.$
We shall see
 that this happens if
 ${\mathfrak {p}}$ is ${\mathfrak {g}}'$-compatible
 in the sense of Definition \ref{def:3.2}.
 In this case, 
 ${\mathfrak {n}}_+':={\mathfrak {n}}_+ \cap {\mathfrak {g}}'$
 is the nilradical 
 of ${\mathfrak {p}}'$, 
 and we set $L'=L \cap G'$
 for the corresponding Levi subgroup in $G'$. 
 
For any smooth vector bundle 
$\V\to M,$ there exists a unique (up to isomorphism) vector bundle
$J^k(\V)$ over $M$ (called the $k$-th jet prolongantion of $\V$)
together with the canonical differential operator
$$
J^k:\fn^\infty(M,{\mathcal{V}})\rightarrow \fn^\infty(M,J^k{\mathcal{V}})  
$$ of order $k.$
Recall that a linear operator $D:\fn^\infty(M,{\mathcal{V}})
\rightarrow\fn^\infty(M,{\mathcal{V}}')$ between two smooth vector bundles
over $M$ is called a differential operator of order at most $k,$
if there is a bundle morphism $Q:J^k{\mathcal{V}}\to{\mathcal{V}}'$
such that $D=Q\circ J^k.$
We need a generalization of this definition to the case of a linear operator acting between vector fibre bundles over two {\it different} smooth manifolds.

\begin{definition}
\label{diffop} 
Fix $k\in\mN$.
Let  $p:N\rightarrow M$ be a smooth map
 between two smooth manifolds and
let ${\mathcal{V}}\rightarrow M$ 
 (respectively, 
 ${\mathcal{V}}'\rightarrow N$)
 be two smooth vector bundles.

A linear map $D:\fn^\infty(M,{\mathcal{V}})
\rightarrow\fn^\infty(N,{\mathcal{V}}')$
 is said to be a differential
operator of order $k,$ if there exists a bundle map
$Q:\fn^\infty(N,p^*(J^k{\mathcal{V}}))\rightarrow\fn^\infty(N,{\mathcal{V}}')$
such that
$$
D=Q\circ p^*\circ J^k.
$$
\end{definition}

An alternative definition of a differential operator
can be based on suitable local properties.
For operators between bundles over the same manifold, the relation between both definitions is contained in the classical theorem of Peetre ($\cite{Peetre}$).  For a more general situation (including the case of an linear operator between bundles over different manifolds),
 an analogue of the Peetre theorem also holds, 
 see \cite[Sect.2]{KP} and 
\cite[Chapt. 19]{KMS}. 
 
 Now we can formulate and prove a generalization of Corollary \ref{cor:2.2} in a more general situation
as follows:

\begin{theorem}\label{folkloreg}
The set of all $G'$-equivariant differential operators from
$\Ind_P^G(V)$ to $\Ind_{P'}^{G'}(V')$ is in one-to-one
  correspondence with the space of all $(\gog',P')$-homomorphisms
from  $M^{\gog'}_{\gop'}({V'}^\ch)$ to $M^\gog_\gop(V^\ch).$
\end{theorem}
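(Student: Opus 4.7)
The plan is to reduce the theorem to the known same-group duality of Fact~\ref{folklore} by analyzing a $G'$-equivariant differential operator through its action on infinite-order jets at the base point. First I would invoke Definition~\ref{diffop} to write any $G'$-equivariant differential operator $D \colon \Ind_P^G(V) \to \Ind_{P'}^{G'}(V')$ of order at most $k$ as $D = Q \circ p^\ast \circ J^k$, where $p \colon G'/P' \hookrightarrow G/P$ is the embedding induced by $G' \subset G$ and $Q \colon p^\ast(J^k\V) \to \V'$ is a bundle map; the $G'$-equivariance of $D$ is then equivalent to that of $Q$. Here one also needs the Peetre-type theorem in the form cited from \cite{KP, KMS} to guarantee that every $G'$-equivariant operator is of finite order, so that such a factorization actually exists.

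Since $G'/P'$ is homogeneous under $G'$, evaluation at the base point $eP'$ sets up a bijection between $G'$-equivariant bundle maps $p^\ast(J^k\V) \to \V'$ and $P'$-equivariant linear maps from the fiber $(p^\ast J^k\V)_{eP'} \cong J^k_e(G,V)^P$ to $V' \cong (\V')_{eP'}$, where $P'$ acts on $J^k_e(G,V)^P$ via the inclusion $P' \subset P$. Passing to the inductive limit in $k$, the set of all $G'$-equivariant differential operators is in bijection with the space of $P'$-equivariant linear maps $\psi \colon J^\infty_e(G,V)^P \to V'$ that factor through some $J^k_e(G,V)^P$.

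The last step is to dualize $\psi$ via Fact~\ref{folklore}. For each $v'^\ch \in (V')^\ch$, the functional $v'^\ch \circ \psi$ on $J^\infty_e(G,V)^P$ factors through a finite jet space, and hence corresponds to a unique element $\Psi(v'^\ch) \in M^\gog_\gop(V^\ch)$. Because the pairing of Fact~\ref{folklore} is $(\gog,P)$-invariant, restricting to $\gop' \subset \gop$ and $P' \subset P$ converts the $P'$-equivariance of $\psi$ into $(\gop',P')$-equivariance of the linear map $\Psi \colon (V')^\ch \to M^\gog_\gop(V^\ch)$. By the universal property of the $\gog'$-Verma module induced from $\gop'$, such a $(\gop', P')$-equivariant map extends uniquely to a $(\gog',P')$-homomorphism $M^{\gog'}_{\gop'}((V')^\ch) \to M^\gog_\gop(V^\ch)$, and composing the three bijections yields the theorem. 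The delicate point is bookkeeping in this last step: one must carefully verify that the $P$-action on jets (inherited from the geometric $G$-action) intertwines under the pairing with left multiplication on $M^\gog_\gop(V^\ch)$, so that restricting equivariance from $(\gog,P)$ to $(\gog',P')$ really matches the two sides, and in particular that the $\gog'$-action on the target Verma module (by restriction from $\gog$) is the one that appears naturally on the differential operator side through $G'$-equivariance.
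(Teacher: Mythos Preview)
Your proposal is correct and follows essentially the same route as the paper: reduce a $G'$-equivariant differential operator to a $P'$-equivariant map $J^\infty_e(G,V)^P \to V'$ factoring through a finite jet, dualize via the pairing of Fact~\ref{folklore} to obtain an element of $\Hom_{P'}((V')^\ch, M^\gog_\gop(V^\ch))$, and then apply Frobenius reciprocity (the universal property of the induced module) to land in $\Hom_{(\gog',P')}(M^{\gog'}_{\gop'}((V')^\ch), M^\gog_\gop(V^\ch))$. One minor remark: since Definition~\ref{diffop} already requires finite order, you do not actually need to invoke the Peetre-type theorem here; the paper cites it only to note that an alternative local characterization would be equivalent.
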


 
\begin{proof}
The inclusion $i:G'\to G$ induces a smooth map $i:G'/P'\to G/P.$
The fibers of  $J^k({\mathcal{V}})$ and $i^*(J^k({{V}}))$ over $o\in G'/P'$ are both
isomorphic to  $J^k_e(G,{{V}})^P, $ hence $G'$-equivariant bundle maps from 
$i^*(J^k({\mathcal{V}}))$ to $V'$
over $G'/P'$ are
in bijective correspondence with elements in 
$\Hom_{P'}(J^k(G,{{V}})^P,V').$  It implies that $G'$-equivariant differential operators
from $\Ind_P^G(V)$ to $\Ind_{P'}^{G'}(V')$ are in one-to-one correspondence with
$P'$-homomorphisms from $J^\infty_e(G,{{V}})^P$ to $V'$ that factor through $J^k_e(G,V)^P$ for some $k.$
By using the pairing in  Fact \ref{folklore},
 such homomorphisms are in bijective correspondence
 with elements in 
$
(M^\gog_\gop(V^\ch)\otimes V')^{P'}
 \simeq \Hom_{P'}((V')^\ch,M^\gog_\gop(V^\ch)).
$
Finally, the Frobenius reciprocity gives us the bijective correspondence
between the spaces $\Hom_{P'}((V')^\ch,M^\gog_\gop(V^\ch))$
and $\Hom_{(\gog',P')}(M^{\gog'}_{\gop'}({V'}^\ch), M^\gog_\gop(V^\ch)).$
\end{proof}

A detailed account of Theorem \ref{folkloreg}
 (in a more general setting)
 with a different proof
 can be found in \cite{KP}.  
See also \cite[Chap. 3]{KSpeh} for the general perspectives
 of {\it{continuous}} symmetry breaking operators
 which include $G'$-equivariant
 {\it{differential}} operators
 as a special case.



\subsection{F-method}
Let us recall that we now consider a general setting
 with a given pair $(P,G)$  together with  another pair $(P',G')$,
 such that $G'\subset G$ is a reductive
subgroup of $G$, $P=LN_+$ is a Levi decomposition of a (real)
parabolic subgroup of $G$, and $P'=P \cap G'$ is a parabolic subgroup of $G'$
with Levi decomposition $L'N_+'\equiv (L\cap G')(N_+\cap G')$. We write
$\gog,\gog',\gop,\gop',\gol,\gol',\gon_+,$ and $\gon_+'$ for the 
complexified Lie algebras of $G,G',P,P',L,L',N_+,$ and $N_+'$, respectively. 
Then ${\mathfrak {n}}_+'={\mathfrak {n}}_+ \cap {\mathfrak {g}}'$
 is the nilradical of ${\mathfrak {p}}'$.  
A usual classical setting 
 is that $G=G'$ and $P=P'$.  

As explained in Theorem \ref{folkloreg},
 a study of intertwining differential operators between principal series of representations
 of the two groups $G$ and $G'$ can be translated to a study of homomorphisms
between generalized Verma modules 
 of the two Lie algebras ${\mathfrak {g}}'$ and ${\mathfrak {g}}$.  
By the universality
 of the tensor product,
 the latter homomorphisms are characterized by the image of the highest weight vectors
 with respect of the parabolic subalgebra $\gop',$ which are
sometimes referred to as {\it singular vectors}. These vectors are annihilated by the nilradical $\gon_+'.$

 The whole procedure of the F-method to find explicit singular 
vectors may be divided into the following three main steps.

\vskip 2mm
 \noindent
{\bf Step 1.} Computation of the infinitesimal action $d\pi(X)$ for $X\in \gon_+({\mathbb{R}})$
 on a chosen principal series representation of $G$ (in the non-compact picture).

\vskip 2mm 
 \noindent
{\bf Step 2. }
Computation of the dual infinitesimal action $d\pi^\ch(X)$ for $X\in \gon_+({\mathbb{R}})$ on the dual space 
$\lD'_{[o]}(N_-,V^\ch)$
of distributions
on $N_-$ with values in $V^\ch$ with support in $[o].$
This space is isomorphic with $M^{\gog}_{\gop}(V^\ch)$ as $\gog$-modules.

\vskip 2mm
\noindent
{\bf Step 3.}
Suppose now that $N_-$ is commutative and 
 we identify $N_-$ with the Lie algebra ${\mathfrak {n}}_-(\mR)$
 via the exponential map.  
The Fourier transform defines an isomorphism 
$F\otimes \Id_{V^\ch}:\lD'_{[o]}(N_-,V^\ch)\to {\operatorname{Pol}}[\gon_+]
\otimes V^\ch$
and the representation $d\pi^\ch$ induces
  the action $d\tilde{\pi}$ of $\gog$
on ${\operatorname{Pol}}[\gon_+] \otimes V^\ch.$


\subsection{Realization of F-method.}

We shall describe now these three steps in more details.

\vskip 1mm

\noindent
{\bf Step 1.}
The fibration  $p:G\to G/P$ is
 a principal fiber bundle with the structure group $P$ over
 the compact manifold $G/P$. 
The manifold $p(N_-\,P)$ is an open dense subset
 of $G/P$,  
sometimes referred to as the big Schubert cell or the open Bruhat cell of $G/P.$ 
It is naturally identified with $N_-$.
Let $o:= e\cdot P\in M\subset G/P$.  
The exponential map
$$
\phi:\gon_-(\mathbb{R})\to G/P, 
\quad
\phi (X):=\exp(X)\cdot o\in G/P
$$
 gives the natural identification
of the vector space $\gon_-(\mathbb{R})$ with 
the open Bruhat cell $N_-\simeq p(N_-\,P)$.

Let $V$ be an irreducible complex finite-dimensional $P$-module, and let us
 consider the corresponding
induced representation $\pi$ of $G$ on $\Ind_P^G(V)\simeq \lC^\infty(G,V)^P.$
The representation of $G$ on $\Ind_P^G(V)$ will be denoted
 by $\pi$, and the infinitesimal representation $d\pi$
 of its complexified Lie algebra $\gog$
will be considered  in the non-compact picture as follows:   
We identify the space of equivariant smooth maps 
$
\lC^\infty(N_-\,P,V)^P
$
with the space $\lC^\infty(N_-,V)$
 as follows:
A function $f\in\lC^\infty(N_-,V)$ corresponds to $\tilde{f}\in\lC^\infty(N_-\,P,V)^P$
defined by $\tilde{f}(n_-\,p)=p^{-1}\cdot f(n_-),\, n_-\in N_-,\,p\in P.$
 The induced representation $d\pi$ defines by restriction the representation of $\gog$ on the space $\lC^\infty(N_-,V).$

An actual computation of the representation $d\pi(Z), Z\in \gon_+({\mathbb{R}})$ 
can be carried out by the usual scheme:
For a given $Z\in \gon_+({\mathbb{R}})$,  
 we consider the one-parameter subgroup $n(t)=\exp(tZ)\in N_+$ and 
rewrite the product $n(t)^{-1}x$,
 for $x\in N_-$
 and for small $t\in{\mathbb R}$ as 
$$
n(t)^{-1}x=\tilde{x}(t)p(t),\,\tilde{x}(t)\in N_-,\, p(t)\in P.
$$
Then, 
 for $f\in\lC^\infty(N_-,V),$
 we have 
\begin{eqnarray}\label{dpi}
[d\pi(Z)f](x)
=
\frac{d}{dt}\big|_{t=0}(p(t))^{-1}\cdot f(\tilde{x}(t)).
\end{eqnarray}
\noindent
{\bf Step 2.}
A simple way how to describe the dual representation $d\pi^\ch$
on the corresponding generalized Verma module is to use a well-known
identification of generalized Verma modules with the spaces of
distributions supported at the origin of $G/P$
 or on the open Bruhat cell $N_-$.  
It goes as follows.

Let $\lD'_{[o]}(N_-,V^\ch)$ denote the space of $V^\ch$-valued distributions on $N_-$ with 
support in the point $\{o\}.$ The Lie algebra $\gog$
acts on this space by the dual action $d\pi^\ch:$
$$
d\pi^\ch(X)(T)(f)=-T(d\pi(X)(f)), X\in\gog, f\in\lC^\infty(N_-,V).
$$
The action can be extended to the action of $U(\gog)$ by
$$
d\pi^\ch(u)(T)(f)=-T(d\pi(X)(u^o)), X\in\gog, f\in\lC^\infty(N_-,V),
$$
where the map $u\to u^o$ is the antiautomorphism of $U(\gog)$ 
acting as $X\mapsto -X$ on $\gog.$

The space  $\lD'_{[o]}(N_-,V^\ch)$ can be identified with a suitable
generalized Verma module:

\vskip 1mm
\noindent
{\bf Fact 2.2}
The linear map
$$
\phi:U(\gog)\otimes_{U(\gop)}V^\ch\to \lD'_{[o]}(N_-,V^\ch)
$$
determined by
$$
\phi(u\otimes v^\ch):f\mapsto \langle v^\ch, (d\pi(u^o)f)(o) \rangle
$$
is a $U(\gog)$-module isomorphism.

The proof of this claim can be found in \cite{BZ, KP}. 

\vskip 5mm
\noindent
{\bf Step 3.}
Suppose now that the unipotent group $N_-$ is commutative.  
We identify $\gon_-(\mR)$ with $N_-$
 via the exponential map.  The Fourier transform gives an isomorphism
\begin{eqnarray}\label{eqn:FT}
{\mathcal F}: \lD'_{[0]}(\gon_-({\mathbb R}))
 \stackrel{\sim}{\longrightarrow} \Pol[\mathfrak {n}_+],\quad T\mapsto {\mathcal F}(T)
\end{eqnarray}
defined by
$$
{\mathcal F}(T)(\xi)=T_x(e^{i\langle x,\xi\rangle}),\,
\mbox{for}\,\,\, x\in\gon_-({\mathbb R}),\xi\in\gon_+,
$$
where $\langle x,\xi\rangle$ is given by the Killing form on $\gog.$

If we consider the space $\lD'_{[0]}(\gon_-(\mR))$ as a convolution algebra
 with the delta function being the unit, then ${\mathcal F}$ is an algebra isomorphism
 mapping the delta function to the constant polynomial $1.$

 The isomorphism (\ref{eqn:FT}) can be extended to distributions with values 
 in $V^\ch$ by
 $$
 {\mathcal F}\otimes \Id_{V^\ch}:\lD'_{[o]}(N_-,V^\ch)
 \to \Pol[\mathfrak {n}_+]\otimes V^\ch.
 $$
 The Fourier transform $ {\mathcal F}\otimes \Id_{V^\ch}$ can be then used to define the action $d\tilde{\pi}$
of $\gog$ on the space $\Pol[\mathfrak {n}_+]\otimes V^\ch.$
Elements of $\gon_+$ act  by differential operators of second order as ${\gon_+}$ is commutative.
Let ${\mathcal F}^{-1}$ be the inverse Fourier transform, and we set
$\varphi:=\phi^{-1}\circ ({\mathcal F}^{-1}\otimes \mathrm{Id}_{V^\ch})$.
Then $\varphi$ gives a bijection 
\begin{eqnarray}\label{eqn:phiPM}
\varphi: \Pol[\mathfrak {n}_+]\otimes V^\ch
\stackrel{\sim}{\longrightarrow} 
U(\gog)\otimes_{U(\gop)}V^\ch.
\end{eqnarray}

 \subsection{Singular vectors in F-method}

\begin{definition}
Let $V$ be any irreducible finite-dimensional $\gop$-module.
Let us define the $L'$-module
\begin{eqnarray}
M_\gop^\gog(V^\ch)^{\gon'_+}
 :=
 \{v\in M^\gog_\gop(V^\ch):d\pi^\ch (Z)v=0
\textrm{  for any } Z\in \gon'_+\}.
\end{eqnarray}
\end{definition}

 The space $M_\gop^\gog(V^\ch)^{\gon'_+}$ of singular vectors  is a principal object of our interest.
For $G'=G,$  the space $M_\gop^\gog(V^\ch)^{\gon'_+}$  is
 of finite-dimension.
Note that for $G' \subsetneqq G,$ the space $M_\gop^\gog(V^\ch)^{\gon'_+}$ is
 infinite-dimensional
 but it is still completely reducible
 as an ${\mathfrak {l}}'$-module. 
Let us decompose $M_\gop^\gog(V^\ch)^{\gon'_+}$
 into irreducible ${\gol}'$-submodules
 and take $W^{\ch}$ to be one of its irreducible submodules. 
Then we get a $\gog'$-homomorphism from $M_{\gop'}^{\gog'}(W^\ch)$
to $M_\gop^\gog(V^\ch).$ 

If $V$ is a $P$-module, then $M_\gop^\gog(V^\ch)$ carries a $(\gog,P)$-module
structure, and therefore, $M_\gop^\gog(V^\ch)^{\gon_+'}$ becomes an $L'$-module.
In this case, we consider irreducible submodules of $L'$ in 
$M_\gop^\gog(V^\ch)^{\gon_+'}$ for $W^\ch$, and regard $W^\ch$ as a $P'$-module
by letting $N_+'$ act trivially. Then we get a $(\gog',P')$-homomorphism
from $M_{\gop'}^{\gog'}(W^\ch)$ to $M_{\gop}^{\gog}(V^\ch)$ via the canonical 
isomorphisms:
\begin{eqnarray}
\mathrm{Hom}_{\gog',P'}(M_{\gop'}^{\gog'}(W^\ch), M_{\gop}^{\gog}(V^\ch)) &\simeq &
\mathrm{Hom}_{\gop',P'}(W^\ch, M_{\gop}^{\gog}(V^\ch))
\nonumber \\ \label{eqn:WMpV}
&\simeq &
\mathrm{Hom}_{L'}(W^\ch, M_{\gop}^{\gog}(V^\ch)^{\gon_+'}).
\end{eqnarray}
Dually,
 in the language 
of differential
operators, we shall get an invariant differential operator from
$\Ind_P^G(V)$ to $\Ind_{P'}^{G'}(W)$ 
 by Theorem \ref{folkloreg}.  
So the knowledge of all irreducible
summands $W^\ch$ of the $L'$-module $M_\gop^\gog(V^\ch)^{\gon'_+}$ gives the knowledge
of all possible targets $\Ind_{P'}^{G'}(W)$ for equivariant differential operators on $\Ind_P^G(V).$

In the F-method,
 we then realize the space $M_\gop^\gog(V^\ch)^{\gon'_+}$  in the space of polynomials on 
$\gon_+$
 with values in $V^\ch$ with action  $d\tpi.$
 It can be done efficiently using the Fourier transform
 as follows:  
 
\begin{definition}  
We define
\begin{align}
 \mbox{\rm Sol}\nonumber
 \equiv& \mbox{\rm Sol}(\mathfrak{g},\mathfrak{g}';V^{\ch})
\\ \label{eqn:sol2l}
 :=&\{f \in \Pol[\mathfrak {n}_+]\otimes V^\ch :
d\tpi(Z) f = 0 \textrm{ for any } Z\in\gon'_+ 
\}.
\end{align}
\end{definition}

The inverse Fourier transform gives
 an $L'$-isomorphism
\begin{equation}
\label{eqn:phi}
\varphi: \mbox{\rm Sol}(\mathfrak{g},\mathfrak{g}';V^{\ch})
 \overset \sim \to
 M_{{\mathfrak {p}}}^{{\mathfrak {g}}}(V^{\ch})^{{\mathfrak {n}}_+'}.  
\end{equation}
An explicit form
of the action $d\tpi(Z)$ leads to 
a (system  of) differential equation for elements in $ \mbox{\rm Sol}$
which makes it possible to describe its structure completely
 in some particular cases of interest. 
We shall see in Sections \ref{sec:4} and \ref{sec:5}
 in certain settings 
the full understanding of the structure of the set $ \mbox{\rm Sol}$
as an $L'$-module
gives complete classification of $\gog'$-homomorphisms
from $M^{\gog'}_{\gop'}(W^\ch)$ to $M^\gog_\gop(V^\ch)$
and answers Problems \ref{prob:A} and \ref{prob:B}.

The transition from $M_\gop^\gog(V^\ch)^{\gon'_+}$ to $ \mbox{\rm Sol}({\mathfrak {g}}, {\mathfrak {g}}';V^{\vee})$ is the key point of the F-method. 
It transforms the algebraic problem
 of finding singular vectors
 in generalized Verma modules 
 (Problem \ref{prob:A}) into an analytic problem
 of solving certain differential equations.
It turns out
 that the F-method is often more efficient
 than other existing algebraic methods
 in finding singular vectors.
Furthermore,
 the F-method clarifies why the combinatorial formula
 appearing in the coefficients
 of intertwining differential operators
 in the example of Juhl \cite[Chapter 5]{ju}
 are related to those of the Gegenbauer polynomials.  
It also reduces substantially the amount of computation needed
 and gives a complete description of the set
of singular vectors
 (e.g. Theorem \ref{basis});
 finally it offers a systematic and effective tool for investigation of singular vectors in
many cases
 (e.g. Theorem \ref{T.3.8}). 
It will be illustrated 
in Sections 4 and 5
 of this article
 as well as
 in the second part of the series
 with a series of different examples.
 
\section{Discretely decomposable branching laws}
\label{sec:3}

\numberwithin{equation}{section}
\setcounter{equation}{0}

Suppose that $\gog\supset \gog'$ are a pair
of complex reductive Lie algebras, and that $X$ is  an irreducible
$\gog$-module. It often happens that the restriction $X|_{\gog'}$
does not contain any irreducible
$\gog'$-module (\cite{KM,KP}). On the other hand, for $X=M^{\gog}_{\gop}(V^\ch)$
belonging to the parabolic BGG category $\lO^{\gop}$ (see below), the restriction $X|_{\gog'}$ needs to contain some irreducible $\gog'$-module for the existence of nonzero $G'$-equivariant differential operators by  Theorem \ref{folkloreg}. This algebraic property is said to be \textquotedbl{discretely decomposable restrictions}\textquotedbl\  in representation theory, which gives a certain constraint on the triple $(\gog,\gog',\gop)$
(Proposition \ref{prop:3.1}).

Further, 
 the uniqueness (up to scaling) of equivariant differential operators
 to submanifolds
 is assured if (\ref{eqn:WMpV}) is one-dimensional, or if the branching law of the restriction
$X|_{\gog'}$ is multiplicity free, by Theorem \ref{folkloreg}.  

In this section we fix notation for the parabolic BGG
 category ${\mathcal{O}}^{\mathfrak{p}}$,
 and summarize the algebraic framework 
 on discretely decomposable restrictions
 and multiplicity-free theorems
 in branching laws
 that were established in \cite{kob,k1,k2}.  
These algebraic results are 
 a guiding principle
 in this current article
 and in the second part \cite{KOSS2}
 of the series for finding appropriate settings
 in parabolic geometry,
 where one could expect to 
 obtain explicit formulas
 of equivariant   differential operators.

\subsection{Category ${\mathcal {O}}$
 and ${\mathcal{O}}^{\mathfrak {p}}$}
\label{subsec:oop}
We begin with a quick review of the parabolic BGG category
$\mathcal{O}^{\mathfrak{p}}$
 (see \cite{H} for an introduction to this area).   

Let ${\mathfrak {g}}$ be a semisimple Lie 
 algebra over $\mathbb C$, 
 and ${\mathfrak {j}}$ a Cartan subalgebra.  
We write $\Delta \equiv \Delta({\mathfrak {g}},
{\mathfrak {j}})$
 for the root system, 
 ${\mathfrak {g}}_{\alpha}$
 ($\alpha \in\Delta$) for the root space, 
 and $\alpha^{\ch}$ for the coroot,
 and 
 $W\equiv W({\mathfrak {g}})$
 for the Weyl group for the root system 
 $\Delta({\mathfrak {g}}, {\mathfrak {j}})$.  
We fix a positive system $\Delta^+$, 
 write  $\rho\equiv \rho({\mathfrak{g}})$ for 
 half the sum of the positive roots,  
 and define a Borel subalgebra
 ${\mathfrak {b}}={\mathfrak {j}}
+{\mathfrak {n}}$
 with nilradical  ${\mathfrak {n}}
:= \oplus_{\alpha\in \Delta^+}
{\mathfrak {g}}_{\alpha}$.   
The Bernstein--Gelfand--Gelfand category
 ${\mathcal {O}}$ 
 (BGG category for short) is defined
 to be the full subcategory 
 of ${\mathfrak {g}}$-modules
 whose objects are finitely generated ${\mathfrak {g}}$-modules $X$
 such that 
 $X$ are ${\mathfrak {j}}$-semisimple 
 and locally ${\mathfrak {n}}$-finite \cite{BGG}.

Let $\mathfrak{p}$ be a parabolic subalgebra
 containing ${\mathfrak {b}}$,
and ${\mathfrak{p}} = {\mathfrak{l}} + {\mathfrak{n}}_+$
 its Levi decomposition 
 with ${\mathfrak{j}} \subset {\mathfrak{l}}$.
We set 
$
     \Delta^+({\mathfrak{l}})
   :=\Delta^+ \cap \Delta({\mathfrak{l}}, {\mathfrak{j}}),
$
and define
\[
{\mathfrak {n}}_-({\mathfrak {l}})
  :=\sum_{\alpha\in \Delta^+ ({\mathfrak {l}})}
  {\mathfrak {g}}_{-\alpha}.
\]
The parabolic BGG category 
${\mathcal{O}}^{\mathfrak {p}}$
 is the full subcategory 
 of ${\mathcal{O}}$
 whose objects $X$ are locally 
 ${\mathfrak {n}}_-({\mathfrak {l}})$-finite.  
We note that ${\mathcal{O}}^{\mathfrak{b}}={\mathcal{O}}$
 by definition.

The set of $\lambda$ for 
 which $\lambda|_{{\mathfrak{j}} \cap [{\mathfrak {l}},{\mathfrak {l}}]}$
 is dominant integral is denoted by 
\[
  \Lambda^+({\mathfrak {l}})
:=
 \{\lambda  \in {\mathfrak {j}}^*
  :
   \langle \lambda,\alpha^\ch\rangle
   \in {\mathbb{N}}
   \text{ for all }\alpha\in \Delta^+({\mathfrak {l}})\}. 
\]
We write $F_\lambda$ for the finite-dimensional simple ${\mathfrak{l}}$-module
 with highest weight $\lambda$,
 inflate $F_\lambda$ to a ${\mathfrak {p}}$-module
 via the projection 
$
   {\mathfrak {p}}\to {\mathfrak {p}}/{\mathfrak {n}}_+\simeq {\mathfrak {l}},
$ 
 and define the generalized Verma module by 
\begin{equation}
\label{eqn:NGpF}
  M_{\mathfrak {p}}^{\mathfrak{g}} (\lambda) 
  \equiv M_{\mathfrak {p}}^{\mathfrak {g}}({F_{\lambda}})
:= U({\mathfrak {g}}) \otimes_{U({\mathfrak{p}})} F_{\lambda}.  
\end{equation}
Then $M_{\mathfrak {p}}^{\mathfrak {g}}({\lambda})\in
{\mathcal{O}}^{\mathfrak {p}}$,
and any simple object in $\mathcal{O}^{\mathfrak {p}}$ is the quotient
of some $M_{\mathfrak {p}}^{\mathfrak {g}}({\lambda})$.
We say $M_{\mathfrak {p}}^{\mathfrak {g}}({\lambda})$ is
 of {\it{scalar type}}
 if $F_{\lambda}$ is one-dimensional, or equivalently,
 if $\langle \lambda, \alpha^\ch\rangle =0$
 for all $\alpha \in \Delta({\mathfrak {l}})$.

If $\lambda \in \Lambda^+({\mathfrak {l}})$
 satisfies 
\begin{equation}
\label{eqn:anti}
   \langle \lambda + \rho , \beta^\ch\rangle \not\in {\mathbb{N}}_+
  \text{ for all } \beta \in \Delta^+ \setminus \Delta({\mathfrak {l}}), 
\end{equation}
then $M_{\mathfrak {p}}^{\mathfrak {g}}({\lambda})$ is simple, 
 see \cite{BD}.

Let ${\mathfrak {Z}}({\mathfrak {g}})$ be the center
 of the enveloping algebra $U({\mathfrak {g}})$, 
and we parameterize maximal ideals of 
 ${\mathfrak {Z}}({\mathfrak {g}})$
 by the Harish-Chandra isomorphism:  
\[
   {\operatorname{Hom}}_{{\mathbb{C}}{\text{-alg}}}
   ({\mathfrak {Z}}({\mathfrak {g}}), {\mathbb{C}})
   \simeq {\mathfrak {j}}^*/W,
   \quad
   \chi_{\lambda} \leftrightarrow \lambda.  
\]
In our normalization, the trivial one-dimensional representation
has a ${\mathfrak {Z}}({\mathfrak {g}})$-infinitesimal
character $\rho \in {\mathfrak {j}}^*/W$.
Then the generalized Verma module $M_{\mathfrak {p}}^{\mathfrak {g}}(\lambda)$
has a ${\mathfrak {Z}}({\mathfrak {g}})$-infinitesimal 
character
$
    \lambda + \rho \in {\mathfrak {j}}^*/W.  
$

We denote by ${\mathcal{O}}^{\mathfrak {p}}_{\lambda}$
 the full subcategory of ${\mathcal{O}}^{\mathfrak {p}}$
 whose objects have generalized ${\mathfrak {Z}}({\mathfrak {g}})$-infinitesimal 
characters $\lambda \in {\mathfrak {j}}^{\ast}/W$, 
 namely,
\[
  {\mathcal{O}}_{\lambda}^{\mathfrak {p}}
  =\bigcup_{n=1}^{\infty}
   \{X \in {\mathcal{O}}^{\mathfrak {p}}:
     (z-\chi_{\lambda}(z))^n v=0
    \quad\text{for any }v \in X
    \text{ and }z \in  {\mathfrak {Z}}({\mathfrak {g}})
   \}.  
\]
Any ${\mathfrak {g}}$-module
 in ${\mathcal{O}}^{\mathfrak {p}}$
 is a finite direct sum of ${\mathfrak {g}}$-modules
 belonging to some ${\mathcal{O}}_{\lambda}^{\mathfrak {p}}$.  
Let $K({\mathcal{O}}_{\lambda}^{\mathfrak {p}})$ be 
 the Grothendieck group 
 of ${\mathcal{O}}_{\lambda}^{\mathfrak {p}}$,
 and set 
\[
     K({\mathcal{O}}^{\mathfrak {p}})
     :=
     {\prod_{\lambda \in {\mathfrak {j}}^*/W}}^{\!\!\!\!\prime}
     K({\mathcal{O}}_{\lambda}^{\mathfrak {p}}), 
\]
where ${\prod}'$ denotes the direct product
 for which the components are zero 
 except for countably 
many constituents.  
Then $K({\mathcal{O}}^{\mathfrak {p}})$
 is a free ${\mathbb{Z}}$-module
 with basis elements $\operatorname{Ch}(X)$
 in one-to-one correspondence with simple modules $X \in {\mathcal{O}}^{\mathfrak {p}}$.  
We note that $K({\mathcal{O}}^{\mathfrak {p}})$
 allows a formal sum of countably many $\operatorname{Ch}(X)$, 
 and contains the Grothendieck group
 of ${\mathcal{O}}_{\lambda}^{\mathfrak {p}}$ as a subgroup.

\subsection{Discretely decomposable 
 branching laws for ${\mathcal {O}}^{\mathfrak {p}}$}

Retain the notation of Section \ref{subsec:oop}.  
Let ${\mathfrak {g}}'$ be a reductive subalgebra
 of ${\mathfrak {g}}$. 
Our subject here is
 to understand the ${\mathfrak {g}}'$-module
 structure of a ${\mathfrak {g}}$-module
 $X \in {\mathcal {O}}^{\mathfrak {p}}$,
 to which we simply refer 
 as the {\it{restriction}}
 $X|_{{\mathfrak {g}}'}$.  
We allow the case 
 where ${\mathfrak {g}}'$
 is not of maximal rank in ${\mathfrak {g}}$.  
This question might look easy
 in the category ${\mathcal{O}}$
 at first glance,
 however, 
 the restriction $X|_{{\mathfrak {g}}'}$
 behaves surprisingly
 in a various
 (and sometimes \textquotedbl{wild}\textquotedbl) manner
 even when $({\mathfrak {g}}, {\mathfrak {g}}')$
 is a reductive symmetric pair.  
In particular,
 it may well happen 
 that the restriction $X|_{{\mathfrak {g}}'}$
 does not contain any simple module of ${\mathfrak {g}}'$,
 which may be considered as the effect  
 of \textquotedbl{hidden continuous spectrum}\textquotedbl\
 (see \cite{k2}).  

In order to exclude \textquotedbl{hidden continuous spectrum}\textquotedbl , 
 we introduced the following notion:
\begin{definition}
[{\cite[Definition 1.1.]{kob3}}]
\label{def:3.1}
A ${{\mathfrak {g}}'}$-module $X$
 is {\it{discretely decomposable}} 
 if there exists an increasing sequence
 of ${\mathfrak {g}}'$-modules
 $X_j$ of finite length ($j \in {\mathbb{N}}$)
 such that $X=\cup_{j=0}^{\infty} X_j$.  
\end{definition}

For an irreducible ${\mathfrak {g}}$-module $X$, 
 the restriction $X|_{{\mathfrak {g}}'}$ contains
 an irreducible ${\mathfrak {g}}'$-module
 if and only if 
 the restriction $X|_{{\mathfrak {g}}'}$ is
 discretely decomposable
 \cite[Sect.1]{kob3}.  
Applying this to $X=M^\gog_\gop(V^\ch)$, we see from Theorem 
\ref{folkloreg} that the concept of 
 \textquotedbl{discretely decomposable restrictions}\textquotedbl\
 exactly guarantees the existence
 of our main objects,
 namely equivariant differential operators
 between two real flag varieties.  

We then ask 
 for which triple 
$
     {\mathfrak {g}}' \subset {\mathfrak {g}}
     \supset {\mathfrak {p}}
$ 
 the restriction $X|_{\mathfrak {g}'}$
 is discretely decomposable
 as a ${\mathfrak {g}}'$-module
 for any $X \in {\mathcal{O}}^{\mathfrak {p}}$.  
A criterion for this was established
 in \cite{k2}
 by using an idea of ${\mathcal{D}}$-modules
 as follows:
Let $G$ be the group $\operatorname{Int}(\gog)$
 of inner automorphisms of ${\mathfrak {g}}$, 
$P\subset G$ the parabolic subgroup of $G$ with Lie algebra 
$\gop$, 
 and $G^\prime\subset G$ a reductive subgroup with Lie algebra 
$\gog^\prime\subset\gog$.

\begin{proposition} 
\label{prop:3.1}
If $G^\prime P$ is  closed in $G,$ then the restriction 
$X|_{\gog^\prime}$ is discretely decomposable for any 
$X\in \sh^\gop$.
The converse statement also holds
 if $(G,G')$ is a symmetric pair.  
\end{proposition}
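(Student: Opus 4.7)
The plan is to invoke the associated variety criterion of \cite{k2}: for an irreducible $\gog$-module $X$, the restriction $X|_{\gog'}$ is discretely decomposable if and only if the orthogonal projection of the associated variety $V_{\gog}(X)$ onto $\gog'$ (with respect to a fixed non-degenerate $\Ad(G)$-invariant form identifying $\gog^{\ast}\simeq\gog$ and $(\gog')^{\ast}\simeq\gog'$) lies inside the nilpotent cone of $\gog'$. Since every object of $\mathcal{O}^{\gop}$ has finite length, it suffices to verify the criterion on irreducible modules, and for any irreducible $X\in\mathcal{O}^{\gop}$ the associated variety is contained in the subspace $\gop^{\perp}\subset\gog^{\ast}$, which under the identification above corresponds to the nilradical $\gon_{-}$ of the opposite parabolic.

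For the forward direction, suppose $G'P$ is closed in $G$. A short orbit-level argument on $G/P$ shows that this closedness is equivalent to $\gop$ being $\gog'$-compatible in the sense of Definition \ref{def:3.2}, and in particular it supplies the direct sum decomposition $\gon_{-}=\gon_{-}'\oplus(\gon_{-}\cap(\gog')^{\perp})$, where $\gon_{-}':=\gon_{-}\cap\gog'$ is the nilradical of the opposite parabolic $\gop'$ of $\gog'$. Orthogonal projection onto $\gog'$ then maps $\gon_{-}$ onto $\gon_{-}'$, which is contained in the nilpotent cone of $\gog'$. The associated variety criterion is therefore fulfilled for every $X\in\mathcal{O}^{\gop}$, which gives discrete decomposability.

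For the converse under the symmetric-pair hypothesis, it is enough to test the criterion on a scalar Verma module $M_{\gop}^{\gog}(\chi)$ at a generic $\chi$, whose associated variety fills all of $\gon_{-}$. Discrete decomposability then forces the orthogonal projection onto $\gog'$ of every element of $\gon_{-}$ to be nilpotent in $\gog'$. Fix a Cartan subalgebra of $\gol$ stable under the involution $\sigma$ defining the symmetric pair and decompose $\gon_{-}$ into joint eigenspaces for this Cartan and for $\sigma$. A root-by-root inspection then shows that the nilpotency of the projection is possible only when $\gon_{-}=\gon_{-}'\oplus(\gon_{-}\cap\gom)$ with $\gom$ the $(-1)$-eigenspace of $\sigma$; this is exactly $\gog'$-compatibility of $\gop$, which by the forward direction is equivalent to closedness of $G'P$. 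The main obstacle is precisely this symmetric-pair converse: for a general reductive pair, pointwise nilpotency of the projection of $\gon_{-}$ need not force a global splitting, and the involutive symmetry is used essentially — in effect through Matsuki's description of the closed $G'$-orbits on $G/P$ — in order to reduce to a root-by-root analysis.
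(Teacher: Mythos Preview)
The paper does not give a proof here but simply cites \cite[Proposition~3.5 and Theorem~4.1]{k2}, noting just before the statement that the criterion there is obtained ``by using an idea of $\mathcal{D}$-modules''. Your outline moves in a related circle of ideas, but it differs from that argument and, more importantly, contains a real gap in the forward direction.

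The gap is your assertion that closedness of $G'P$ in $G$ is \emph{equivalent} to $\gog'$-compatibility of $\gop$. The paper is explicit that compatibility is only a \emph{sufficient} condition for closedness (see the sentence introducing Definition~\ref{def:3.2}: ``Let us consider a simple sufficient condition for the closedness of $G'P$ in $G$\dots''), and in \cite{k2} the two hypotheses are placed in separate propositions precisely because they are not the same. Your ``short orbit-level argument'' is never supplied, and in general no such argument exists: there are reductive pairs $G'\subset G$ and parabolics $P$ with $G'P$ closed for which no hyperbolic element of $\gog'$ defines $\gop$. Without compatibility you do not obtain the splitting $\gon_{-}=\gon_{-}'\oplus(\gon_{-}\cap(\gog')^{\perp})$, and hence you cannot conclude that the projection of $\gon_{-}$ to $\gog'$ lands inside a nilpotent subspace. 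A related point: the associated-variety criterion you quote is not stated in \cite{k2} as an ``if and only if''; the direction you actually need for the forward implication (nilpotency of the projection $\Rightarrow$ discrete decomposability) is the delicate one, and it is not what \cite{k2} uses. The proof of Proposition~3.5 there proceeds instead through Beilinson--Bernstein localization, exploiting the closed embedding $G'/(G'\cap P)\hookrightarrow G/P$ directly.

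Your converse sketch is closer in spirit to \cite[Theorem~4.1]{k2}: the necessary direction of the associated-variety criterion is genuinely available, and the symmetric-pair hypothesis is used essentially. But the step ``pointwise nilpotency of the projection of $\gon_{-}$ forces $\gon_{-}=(\gon_{-}\cap\gog')\oplus(\gon_{-}\cap\gom)$'' is the substantive part and needs more than ``a root-by-root inspection''. Note also that your conclusion there is $\gog'$-compatibility, which is strictly stronger than the closedness that the proposition asserts; so either the argument proves more than Theorem~4.1 of \cite{k2}, or one of the unproven steps is where the discrepancy hides.
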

\begin{proof}
See \cite[Proposition 3.5
 and Theorem 4.1]{k2}.   
\end{proof}

Let us consider a simple sufficient condition
 for the closedness of $G'P$ in $G$, 
 which will be fulfilled
 in all the examples discussed in this article. To that aim, 
let $E$ be a hyperbolic element of $\gog$
 defining a parabolic subalgebra
 $\gop(E)=\gol(E)+\gon(E)$, 
namely, 
 ${\mathfrak {l}}(E)$ and ${\mathfrak {n}}(E)$
 are the sum of eigenspaces
 of $\operatorname{ad}(E)$
 with zero, positive eigenvalues.  

\begin{definition}
[{\cite[Definition 3.7]{k2}}]
\label{def:3.2}
A parabolic subalgebra $\gop$
 is $\gog^\prime$-{\it{compatible}} if there exists a
hyperbolic element $E^\prime\in\gog^\prime$ such that $\gop=\gop(E^\prime)$. 
\end{definition}

If $\gop=\gol +\gon_+$ is $\gog^\prime$-compatible, then $\gop^\prime :=\gop\cap\gog^\prime$
becomes a parabolic subalgebra of $\gog^\prime$ with 
the following Levi decomposition:
$$
\gop^\prime =\gol^\prime +\gon_+^\prime :=(\gol\cap\gog^\prime)+(\gon_+\cap\gog^\prime), 
$$
and $P^\prime:=P \cap G^{\prime}$
 becomes a parabolic subgroup of $G^{\prime}$.  
Hence,  
 $G^{\prime}/P^{\prime}$ becomes automatically 
 a closed submanifold of $G/P$,
 or equivalently,
 $G' P$ is closed in $G$.  
Here is a direct consequence of Proposition \ref{prop:3.1}:  

\begin{proposition}[{\cite[Proposition 3.8]{k2}}]
If $\gop$ is $\gog^\prime$-compatible, 
then the restriction $X|_{\gog^\prime}$ is discretely decomposable for any 
$X\in \sh^\gop$, and any $X_j$ in Definition \ref{def:3.1} belongs to the
parabolic BGG category ${\fam2 O}^{\gop'}$ for $\gog'$-modules.
\end{proposition}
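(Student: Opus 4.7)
The plan is to split the statement into two assertions.

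First, discrete decomposability of $X|_{\gog'}$ is obtained by invoking Proposition \ref{prop:3.1}. The $\gog'$-compatibility provides a hyperbolic element $E'\in\gog'$ with $\gop=\gop(E')$; as explained just before the statement, this forces $\gop':=\gop\cap\gog'$ to be a parabolic subalgebra of $\gog'$ and realizes $G'/P'$ as a closed submanifold of $G/P$. Thus $G'P$ is closed in $G$, and Proposition \ref{prop:3.1} immediately gives discrete decomposability for every $X\in\sh^\gop$.

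Next, I would exhibit an explicit filtration witnessing this, with all terms in $\sh^{\gop'}$. After arranging $E'\in\mathfrak{j}\cap\gog'$, the fact that $\ad(E')$ vanishes on $\gol$ makes $E'$ central in $\gol$, so the $E'$-eigenspaces $X(\mu)\subset X$ are $\gol$-invariant, and in particular $\gol'$-invariant. Combined with local $\gop$-finiteness and finite generation of $X$ in $\sh^\gop$, one checks that each $X(\mu)$ is finite-dimensional and that the eigenvalue set is bounded above and well-ordered from above, say $\mu_0>\mu_1>\mu_2>\cdots$. Setting
\[
W_j:=\bigoplus_{k=0}^{j}X(\mu_k),\qquad X_j:=U(\gog')\cdot W_j,
\]
the subspace $W_j$ is $\gop'$-invariant (since $\gon'_+\subset\gon_+$ strictly raises the $E'$-eigenvalue), so by PBW $X_j=U(\gon'_-)\cdot W_j$ is finitely generated over $\gog'$, and $\bigcup_j X_j=X$.

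The verification $X_j\in\sh^{\gop'}$ is then formal: as a $\gog'$-submodule of $X$, the module $X_j$ inherits $\mathfrak{j}'$-semisimplicity, local finiteness under the nilradical of a Borel of $\gog'$, and local $\gon_-(\gol')$-finiteness (the latter using $\gon_-(\gol')=\gon_-(\gol)\cap\gog'$ and a choice of positive system of $\gog'$ compatible with that of $\gog$). Together with finite generation over $\gog'$, this places $X_j$ in $\sh^{\gop'}$; since the argument uses only that $X_j$ is a finitely generated $\gog'$-submodule of $X$, it applies to any $X_j$ arising from a filtration as in Definition \ref{def:3.1}. Finite length then follows from the standard fact that every object in $\sh^{\gop'}$ has finite length. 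The main obstacle is establishing the finite-dimensionality and discreteness of the set $\{\mu:X(\mu)\neq 0\}$; the cleanest route reduces to $X=M^{\gog}_{\gop}(\lambda)$, since every object of $\sh^\gop$ is a quotient of a finite direct sum of such modules, and for these the PBW isomorphism $M^{\gog}_{\gop}(\lambda)\cong U(\gon_-)\otimes F_\lambda$ together with the strict negativity of $\ad(E')$-eigenvalues on $\gon_-$ makes both claims transparent.
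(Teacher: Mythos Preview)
Your argument for the first assertion coincides with the paper's: the text preceding the proposition observes that $\gog'$-compatibility forces $G'/P'\hookrightarrow G/P$ to be closed, hence $G'P$ is closed in $G$, and then Proposition~\ref{prop:3.1} applies. The paper offers no further proof of this proposition beyond that remark and the citation to \cite[Proposition~3.8]{k2}.

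For the second assertion (that every $X_j$ lies in $\sh^{\gop'}$) the paper gives no argument at all---it simply defers to \cite{k2}. Your direct proof via the $E'$-grading is therefore more than the paper provides, and it is essentially correct. A couple of points deserve sharper phrasing. First, the claim that the $E'$-spectrum on $X$ is reverse well-ordered with finite-dimensional eigenspaces needs a word of justification: it is not automatic from ``bounded above,'' but follows because the $\ad(E')$-eigenvalues on $\gon_-$ are finitely many strictly negative reals, so for a generalized Verma module $U(\gon_-)\otimes F_\lambda$ only finitely many PBW monomials have $E'$-degree above any fixed bound; the general case then follows from finite length in $\sh^\gop$. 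Second, the inheritance of $\mathfrak{j}'$-semisimplicity and local $\gon_-(\gol')$-finiteness requires choosing $\mathfrak{j}'\subset\mathfrak{j}$ and a compatible positive system for $\gog'$; you allude to this, but one should say explicitly that such a choice is possible because $E'\in\gog'$ is semisimple in $\gog$ (as $\gog'$ is reductive in $\gog$), so a Cartan $\mathfrak{j}'$ of $\gog'$ containing $E'$ lies in $\gol'$ and can be extended to a Cartan $\mathfrak{j}$ of $\gog$ inside $\gol$. With these clarifications your proof is complete, and your final remark---that the verification uses only finite generation and being a $\gog'$-submodule of $X$, hence applies to any filtration witnessing Definition~\ref{def:3.1}---correctly handles the ``any $X_j$'' quantifier.
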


Let $\gop$ be a $\gog^\prime$-compatible parabolic subalgebra,
 and keep the above notation. 
We denote by $F^\prime_\mu$ a finite-dimensional simple $\gol^\prime$-module with highest weight 
$\mu\in\Lambda^+(\gol^\prime)$. 
The $\gol^\prime$-module structure 
on the opposite nilradical $\gon_-$ descends to $\gon_-/(\gon_-\cap \gog^\prime)$, 
and consequently extends to the symmetric tensor algebra 
 $S(\gon_-/(\gon_-\cap \gog^\prime))$. 
We set 
$$
m(\lambda ,\mu):=\dim \Hom_{\gol^\prime}(F^\prime_\mu, F_\lambda|_{\gol^\prime}\otimes 
S(\gon_-/(\gon_-\cap \gog^\prime))).
$$
The following identity is a key step 
 to find branching laws
 (in a generic case) 
 for the restriction $X|_{{\mathfrak {g}}'}$
 for $X \in \sh^\gop$: 

\begin{theorem}[{\cite[Proposition 5.2]{k2}}]
\label{groth}
 Suppose that $\gop =\gol +\gon_+$ is 
a $\gog^\prime$-compatible parabolic subalgebra of $\gog$, 
 and $\lambda\in\Lambda^+(\gol)$.
Then 
\begin{enumerate}
\item[{\rm{1)}}]
$m(\lambda ,\mu)<\infty$ for all $\mu\in\Lambda^+(\gol^\prime)$.
\item[{\rm{2)}}]
We have the following identity in $K(\sh^\gop)$: 
$$
M^\gog_\gop(\lambda)|_{\gog^\prime}\simeq \bigoplus_{\mu\in\Lambda^+(\gol^\prime)}
m(\lambda ,\mu)M^{\gog^\prime}_{\gop^\prime}(\mu)
$$
\end{enumerate}
for any generalized Verma modules
 $M^\gog_\gop(\lambda)$ and $M^{\gog^\prime}_{\gop^\prime}(\mu)$
defined respectively by 
$
M^\gog_\gop(\lambda)=U(\gog)\otimes_{U(\gop)}F_\lambda ,\,\, 
M^{\gog^\prime}_{\gop^\prime}(\mu)=U(\gog^\prime)\otimes_{U(\gop^\prime)}F^\prime_\mu.
$
\end{theorem}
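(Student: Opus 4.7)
The plan is to base the argument on the PBW theorem combined with the $\gog'$-compatibility of $\gop$, which provides a canonical $\gol'$-stable direct sum decomposition $\gon_- = \gon_-' \oplus \mathfrak{u}$, where $\mathfrak{u}$ is an $\gol'$-invariant complement isomorphic as an $\gol'$-module to $\gon_-/(\gon_-\cap \gog')$. Such a splitting exists because the hyperbolic element $E' \in \gog'$ defining $\gop = \gop(E')$ acts semisimply on $\gog$ and $\gol'$ respects the $\ad(E')$-grading, so any $\gol'$-stable complement (obtained by semisimplicity of the $\gol'$-action) will do.

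First I would establish the finiteness claim 1). The element $E'$ has strictly negative eigenvalues on $\gon_-$ and in particular on $\mathfrak{u}$, so all $\ad(E')$-weights on $S^k(\mathfrak{u})$ lie in $(-\infty, -ck]$ for some $c>0$, while $F_\lambda$ is finite-dimensional. For a fixed $\mu \in \Lambda^+(\gol')$ the module $F'_\mu$ has a bounded $E'$-eigenvalue spectrum, so only finitely many $k$ can contribute a nonzero $\Hom_{\gol'}(F'_\mu, F_\lambda|_{\gol'}\otimes S^k(\mathfrak{u}))$, and each such Hom space is finite-dimensional. Summing yields $m(\lambda,\mu)<\infty$.

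For assertion 2) I would use the PBW identification $M^\gog_\gop(\lambda) \simeq S(\gon_-) \otimes F_\lambda$ as $\gol'$-modules, and then $S(\gon_-)\simeq S(\gon_-')\otimes S(\mathfrak{u})$ from the decomposition above. The key device is an increasing filtration $0=F^{(-1)}\subset F^{(0)}\subset F^{(1)}\subset\cdots$ of $M^\gog_\gop(\lambda)$ defined by $\mathfrak{u}$-degree (with $F^{(k)}$ spanned by PBW monomials of $\mathfrak{u}$-degree $\leq k$, using a fixed ordering placing $\gon_-'$-generators to the right). Stability of $F^{(k)}$ under $\gol'$ and $\gon_-'$ is immediate. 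Stability under $\gon_+'$ is the subtle point: for $Z\in\gon_+'$ and $Y\in\mathfrak{u}$, the bracket $[Z,Y]\in\gog$ has strictly greater $\ad(E')$-weight than $Y$; decomposing $[Z,Y]$ along $\gog = \gon_-\oplus\gol\oplus\gon_+$ and tracking weights, one checks that in the associated graded $\operatorname{gr}U(\gog)\cong S(\gog)$, commuting $Z$ past $Y$ does not raise the $\mathfrak{u}$-degree. Consequently the successive quotient $F^{(k)}/F^{(k-1)}$ acquires the structure of a $\gog'$-module in which $\gon_+'$ acts trivially on the \textquotedbl{top piece}\textquotedbl\ $S^k(\mathfrak{u})\otimes F_\lambda$, so it is isomorphic as a $\gog'$-module to
\[
U(\gon_-')\otimes \bigl(S^k(\mathfrak{u})\otimes F_\lambda|_{\gol'}\bigr)\;\simeq\;\bigoplus_{\mu}\dim\Hom_{\gol'}\bigl(F'_\mu,\,F_\lambda|_{\gol'}\otimes S^k(\mathfrak{u})\bigr)\,M^{\gog'}_{\gop'}(\mu).
\]
Summing the multiplicities over $k$ produces $m(\lambda,\mu)$ and gives the claimed identity in $K(\sh^{\gop})$, where on the right the countable direct sum is a well-defined element thanks to 1).

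The main obstacle is verifying that the $\mathfrak{u}$-degree filtration is $\gon_+'$-stable and that the associated graded has precisely the asserted $\gog'$-module structure. The heart of the argument is the weight analysis via $\ad(E')$: an element of $\gon_+'$ has strictly positive $\ad(E')$-weight and an element of $\mathfrak{u}$ has strictly negative $\ad(E')$-weight, and the resulting bracket decomposes into components whose actions on PBW monomials either preserve or strictly drop the $\mathfrak{u}$-degree; this is what makes the filtration compatible with the $\gog'$-action and allows passage to the Grothendieck group, which is otherwise insensitive to the precise extension data between the layers.
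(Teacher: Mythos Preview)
The paper does not contain a proof of this statement; it is quoted directly from \cite[Proposition~5.2]{k2} without argument. So there is no in-paper proof to compare against. That said, your sketch is essentially the standard argument and is almost certainly close to what appears in \cite{k2}: reduce everything to a formal character identity coming from the PBW decomposition $S(\gon_-)\simeq S(\gon_-')\otimes S(\gon_-/\gon_-')$ as $\gol'$-modules, and organize it via a filtration so that the restriction makes sense in $K(\mathcal{O}^{\gop'})$.

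One point deserves more care. Your claim that in the associated graded ``$\gon_+'$ acts trivially on the top piece $S^k(\mathfrak{u})\otimes F_\lambda$'' does not follow from the weight analysis you give. Knowing that $[Z,Y]$ has strictly larger $\ad(E')$-weight than $Y$ does not prevent $[Z,Y]$ from having a nonzero component in $\mathfrak{u}$ when $\gon_-$ is not abelian (since $\mathfrak{u}$ may contain several $\ad(E')$-eigenspaces). So the graded piece $F^{(k)}/F^{(k-1)}$ need not literally be isomorphic to $M^{\gog'}_{\gop'}\bigl(S^k(\mathfrak{u})\otimes F_\lambda|_{\gol'}\bigr)$ as a $\gog'$-module. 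However, this does not damage the conclusion: since both modules have the same underlying $\gol'$-module $U(\gon_-')\otimes S^k(\mathfrak{u})\otimes F_\lambda$, they have the same formal $\goh'$-character, hence the same image in the Grothendieck group. You should either (a) replace the ``$\gon_+'$ acts trivially'' assertion by the weaker (and sufficient) statement that the characters match, or (b) restrict to the abelian-nilradical case, where $[\gon_+',\mathfrak{u}]\subset\gol$ and your stronger claim is actually true. Similarly, the $\gog'$-stability of your $\mathfrak{u}$-degree filtration is clean when $\gon_-$ is abelian but requires a more careful reordering argument otherwise; for the Grothendieck-group statement one can bypass this by working directly with characters once discrete decomposability (Proposition~\ref{prop:3.1}) guarantees the restriction lies in $K(\mathcal{O}^{\gop'})$.
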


Finally,
 we highlight the multiplicity-free case, 
 namely, 
 when $m(\lambda, \mu)\le 1$
 and give a closed formula
 of branching laws.  
Suppose now that ${\mathfrak {p}} = \mathfrak{l} + \mathfrak{n}_+$ 
is a parabolic subalgebra
such that the nilradical $\mathfrak{n}_+$ is abelian.
We write
$\mathfrak{g} = \mathfrak{n}_- + \mathfrak{l} + \mathfrak{n}_+$
for the Gelfand--Naimark decomposition.  
Let $\theta$ be an endomorphism of ${\mathfrak {g}}$
 such that $\theta|_{{\mathfrak {l}}}=\operatorname{id}$
 and $\theta|_{{\mathfrak {n}}_+ + {\mathfrak {n}}_-}=-\operatorname{id}$.  
Then $\theta$ is an involutive automorphism
 of ${\mathfrak {g}}$
 because ${\mathfrak {n}}_+$
 is abelian.  

Suppose $\tau$ is another involutive automorphism
 of the complex Lie algebra ${\mathfrak {g}}$ 
 such that $\tau {\mathfrak {l}}={\mathfrak {l}}$
 and $\tau {\mathfrak {n}}_{\pm}={\mathfrak {n}}_{\pm}$.  
Then $\tau \theta = \theta \tau$
 and the parabolic subalgebra
 ${\mathfrak {p}}$ is ${\mathfrak {g}}^{\tau}$-compatible.  
We take a Cartan subalgebra $\mathfrak{j}$ of $\mathfrak{l}$
 such that $\mathfrak{j}^\tau$
is a maximal abelian subspace of ${\mathfrak {l}}^{\tau}$.  
Here, 
 for a subspace $V$ in ${\mathfrak {g}}$, 
 we write 
$
     V^{\pm \tau}:=\{v \in V: \tau v = \pm v\}
$
 for the $\pm 1$ eigenspaces of $\tau$, 
respectively.  
Then 
\[
{\mathfrak {g}}^{\tau\theta}:= {\mathfrak {l}}^{\tau}
                  + {\mathfrak {n}}_-^{-\tau}
                  + {\mathfrak {n}}_+^{-\tau}
\]
is a reductive subalgebra of 
 ${\mathfrak {g}}$.  
We write ${\mathfrak {g}}^{\tau\theta} = \bigoplus_i {\mathfrak {g}}_i^{\tau \theta}$
 for the decomposition 
 into simple or abelian ideals,
 and decompose 
 ${\mathfrak {n}}_{-}^{-\tau}=\bigoplus_i{\mathfrak {n}}_{-,i}^{-\tau}$
 correspondingly.  
Each ${\mathfrak {n}}_{-,i}^{-\tau}$
 is a ${\mathfrak {j}}^{\tau}$-module,
 and we denote by 
$\Delta({\mathfrak {n}}_{-,i}^{-\tau}, {\mathfrak {j}}^{\tau})$
 the set of weights of ${\mathfrak {n}}_{-,i}^{-\tau}$
 with respect to ${\mathfrak {j}}^{\tau}$.  
(We note that
 ${\mathfrak {n}}_{-,i}^{-\tau}=\{0\}$
 except for a single $i$
 in the case where we shall treat,
 and thus we may simply replace
 ${\mathfrak {n}}_{-,i}^{-\tau}$
 by ${\mathfrak {n}}_{-}^{-\tau}$ below
 for actual computations below.)

The roots $\alpha$ and $\beta$ are said to be 
 {\it{strongly orthogonal}}
 if neither $\alpha + \beta$ nor $\alpha - \beta$ is a root.  
We take a maximal set of strongly orthogonal roots 
$\{\nu_1^{(i)}, \cdots, \nu_{k_i}^{(i)}\}$ 
 in  $\Delta({\mathfrak{n}}_{-,i}^{-\tau}, {\mathfrak{j}}^{\tau})$ 
 inductively as follows:
\begin{enumerate}
\item[1)]
$\nu_1^{(i)}$ is the highest root
 of $\Delta({\mathfrak {n}}_{-,i}^{-\tau}, {\mathfrak {j}}^{\tau})$.  
\item[2)]
 $\nu_{j+1}^{(i)}$ is the highest root among the elements
 in $\Delta({\mathfrak{n}}_{-,i}^{-\tau}, {\mathfrak{j}}^{\tau})$
 that are strongly orthogonal to $\nu_1^{(i)}, \cdots, \nu_{j}^{(i)}$
 ($1 \le j \le k_i-1$).   
\end{enumerate}

Then we recall from \cite{k1,k2}
 the multiplicity-free branching law:

\begin{theorem}
\label{thm:3.5}
Suppose that ${\mathfrak {p}}$, 
  $\tau$, 
 and $\lambda$ are as above.  
Then the generalized Verma module
 $M_{\mathfrak {p}}^{\mathfrak {g}}({\lambda})$
 decomposes into a multiplicity-free direct sum of generalized Verma modules
 of ${\mathfrak {g}}^{\tau}$
 in $K({\mathcal{O}}^{\mathfrak {p}})$: 
\begin{equation}\label{eqn:mfbr}
  M_{\mathfrak {p}}^{\mathfrak {g}}(\lambda)|_{\mathfrak {g}^{\tau}}
  \simeq
  \bigoplus 
  M_{\mathfrak {p}^{\tau}}^{\mathfrak {g}^{\tau}}
  (\lambda|_{\mathfrak {j}^{\tau}}+\sum_i \sum_{j=1}^{k_i} a_j^{(i)} \nu_j^{(i)}).  
\end{equation}
Here the summation is taken
 over the following subset
 of ${\mathbb{N}}^k$
 ($k=\sum k_i$) defined by 
\[
\prod_i A_i, 
\qquad
 A_i:=\{(a_j^{(i)})_{1 \le j \le k_i} \in {\mathbb{N}}^{k_i}
:
a_1^{(i)} \ge \cdots \ge a_{k_i}^{(i)} \ge 0
\}.  
\]
The restriction 
$M_{\mathfrak {p}}^{\mathfrak {g}}(\lambda)|_{\mathfrak {g}^{\tau}}$
 is actually
 a direct sum in the parabolic BGG category
 ${\mathcal{O}}^{\mathfrak {p}}$
 if $\lambda$ is sufficiently negative,
 or more generally, 
 if the following two conditions are satisfied
 for $\{a_{j}^{(i)}\}$ in the above range: 
\begin{align}
&\langle \lambda|_{\mathfrak {j}^{\tau}}+\rho({\mathfrak {g}}^{\tau})
+
\sum_i \sum_{j=1}^{k_i}a_{j}^{(i)}\nu_{j}^{(i)}, 
\beta^{\ch}\rangle  \notin {\mathbb{N}}_+
\quad \text{for all }\beta \in \Delta({\mathfrak {n}}_+^{\tau}, {\mathfrak {j}}^{\tau}),
\label{eqn:36a}
\\
&  \lambda|_{{\mathfrak{j}}^{\tau}} 
  +
  \rho({{\mathfrak{g}}^{\tau}})
  + 
\sum_i \sum_{j=1}^{k_i}a_{j}^{(i)}\nu_{j}^{(i)}
\text{
 are all distinct in 
 $({\mathfrak{j}}^{\tau})^*/W({\mathfrak{g}}^{\tau})$.  }
\label{eqn:36b}
\end{align}

\end{theorem}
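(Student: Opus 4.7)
The plan is to combine the character-level branching identity of Theorem \ref{groth} with a classical multiplicity-free decomposition of the symmetric algebra on $\mathfrak{n}_-^{-\tau}$ under $\mathfrak{l}^\tau$, and then to promote the resulting Grothendieck group identity to a genuine direct sum via standard infinitesimal character arguments.

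First, I would verify that $\mathfrak{p}$ is $\mathfrak{g}^\tau$-compatible: since $\tau \theta = \theta \tau$ and $\mathfrak{p}$ is defined by a hyperbolic element $E$ characterizing $\theta$, one can average $E$ over $\{1,\tau\}$ and replace it by an element of $\mathfrak{l}^\tau$ defining the same parabolic. Thus Theorem \ref{groth} applies and yields
\[
M_\mathfrak{p}^\mathfrak{g}(\lambda)\big|_{\mathfrak{g}^\tau}
\;\simeq\;
\bigoplus_{\mu \in \Lambda^+(\mathfrak{l}^\tau)}
m(\lambda,\mu)\, M_{\mathfrak{p}^\tau}^{\mathfrak{g}^\tau}(\mu)
\quad \text{in } K(\mathcal{O}^\mathfrak{p}),
\]
where $m(\lambda,\mu)=\dim\Hom_{\mathfrak{l}^\tau}\!\bigl(F'_\mu,\, F_\lambda|_{\mathfrak{l}^\tau}\otimes S(\mathfrak{n}_-/(\mathfrak{n}_-\cap\mathfrak{g}^\tau))\bigr)$. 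Since $\tau$ acts as $\pm 1$ on $\mathfrak{n}_-$, one has $\mathfrak{n}_-\cap\mathfrak{g}^\tau=\mathfrak{n}_-^\tau$ and therefore $\mathfrak{n}_-/(\mathfrak{n}_-\cap\mathfrak{g}^\tau)\simeq \mathfrak{n}_-^{-\tau}$ as $\mathfrak{l}^\tau$-modules.

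The core step is then to establish, as $\mathfrak{l}^\tau$-modules, the multiplicity-free expansion
\[
S(\mathfrak{n}_-^{-\tau})
\;\simeq\;
\bigoplus_{(a_j^{(i)})\in \prod_i A_i}
V^{\mathfrak{l}^\tau}\!\Bigl(\textstyle\sum_{i}\sum_{j=1}^{k_i} a_j^{(i)}\nu_j^{(i)}\Bigr).
\]
This is the Hua--Kostant--Schmid theorem applied factor by factor: since $\mathfrak{n}_+$ is abelian and $\tau$ is an involution commuting with $\theta$, each simple ideal $\mathfrak{g}_i^{\tau\theta}=\mathfrak{l}_i^\tau+\mathfrak{n}_{-,i}^{-\tau}+\mathfrak{n}_{+,i}^{-\tau}$ gives rise to a Hermitian symmetric situation whose strongly orthogonal root system is exactly $\{\nu_1^{(i)},\dots,\nu_{k_i}^{(i)}\}$, and the products of these decompositions give the whole sum. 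Once this is in hand, since the theorem implicitly deals with the scalar case where $F_\lambda|_{\mathfrak{l}^\tau}$ is one-dimensional of weight $\lambda|_{\mathfrak{j}^\tau}$, tensoring shifts every highest weight by $\lambda|_{\mathfrak{j}^\tau}$ and preserves multiplicity-freeness. Substituting the resulting values $m(\lambda,\mu)\in\{0,1\}$ into Theorem \ref{groth} yields \eqref{eqn:mfbr} in $K(\mathcal{O}^\mathfrak{p})$.

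Finally, to upgrade this identity to a genuine direct sum in $\mathcal{O}^\mathfrak{p}$ under the extra hypotheses \eqref{eqn:36a}--\eqref{eqn:36b}, I would argue in two parts. Condition \eqref{eqn:36a} is precisely the Jantzen-type criterion \eqref{eqn:anti} applied to $\mathfrak{g}^\tau$ at each weight $\lambda|_{\mathfrak{j}^\tau}+\sum a_j^{(i)}\nu_j^{(i)}$, and hence guarantees that each summand $M_{\mathfrak{p}^\tau}^{\mathfrak{g}^\tau}(\mu)$ is simple. Condition \eqref{eqn:36b} ensures that distinct summands carry distinct $\mathfrak{Z}(\mathfrak{g}^\tau)$-infinitesimal characters, so $\Ext^1_{\mathcal{O}^{\mathfrak{p}^\tau}}$ between any two of them vanishes; consequently the finite-length filtration of $M_\mathfrak{p}^\mathfrak{g}(\lambda)|_{\mathfrak{g}^\tau}$ provided by discrete decomposability splits block by block. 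The main technical obstacle is the uniform verification of the multiplicity-free decomposition of $S(\mathfrak{n}_-^{-\tau})$ across all simple ideals of $\mathfrak{g}^{\tau\theta}$ (together with the identification of the resulting $\mathfrak{l}^\tau$-highest weights with the strongly orthogonal root sums $\sum a_j^{(i)}\nu_j^{(i)}$); the subsequent passage from $K(\mathcal{O}^\mathfrak{p})$ to an honest direct sum, once \eqref{eqn:36a} and \eqref{eqn:36b} are granted, is then essentially formal.
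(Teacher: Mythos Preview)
Your proposal is correct and follows essentially the same route as the paper's proof. The paper itself gives only a three-line argument, deferring the identity \eqref{eqn:mfbr} in $K(\mathcal{O}^{\mathfrak{p}})$ to \cite[Theorem 8.3]{k1} and \cite[Theorem 5.2]{k2} (whose proofs proceed exactly as you outline: Theorem \ref{groth} plus the Hua--Kostant--Schmid decomposition of $S(\mathfrak{n}_-^{-\tau})$ as an $\mathfrak{l}^\tau$-module), and then invokes the same infinitesimal-character/no-extension argument you give for the passage from the Grothendieck group to a genuine direct sum under \eqref{eqn:36a}--\eqref{eqn:36b}.
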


\begin{proof}
The formula \eqref{eqn:mfbr} was proved 
 in \cite[Theorem 8.3]{k1}
 (in the framework of holomorphic discrete series representations)
 and in \cite[Theorem 5.2]{k2}
 (in the framework of generalized Verma modules)
 under the assumption that $\lambda$ is sufficiently negative
 and that ${\mathfrak {g}}^{\tau \theta}$ is simple.  
The latter proof shows in fact 
 that the identity \eqref{eqn:mfbr} holds 
 in $K({\mathcal{O}}^{\mathfrak {p}})$ for all $\lambda$.  
Since two modules
 with different infinitesimal characters
 do not have extension,
 the last statement follows.  
\end{proof}
\begin{remark}
In the case ${\mathfrak {l}}={\mathfrak {g}}^{\tau}$, 
 each summand of the right-hand side
 is finite-dimensional
 because the parabolic subalgebra
 ${\mathfrak {p}}^{\tau}$
 coincides with ${\mathfrak {g}}^{\tau}$.  
In this very special case,
 Theorem \ref{thm:3.5} for sufficiently negative $\lambda$
 was proved earlier by B. Kostant and W. Schmid.  
We also note that Theorem \ref{thm:3.5} includes the decomposition 
 of the tensor product of two representations for generic parameters because
 the pair $({\mathfrak{g}} \oplus {\mathfrak{g}}, 
\operatorname{diag}({\mathfrak{g}}))$
 is regarded as an example
 of a symmetric pair. 
\end{remark}


\section{Conformal geometry with arbitrary signature}
\label{sec:4}
 \numberwithin{equation}{section}
 \setcounter{equation}{0}
In Sections \ref{sec:4} and \ref{sec:5}, 
 we illustrate the F-method
 by examples
 of pseudo-Riemannian manifolds
 with arbitrary signatures,
 whose symmetries are given by
\[
(G,G')=(SO_o(p,q),SO_o(p,q-1))
\,\,\text{ or }\,\,(Spin_o(p,q),Spin_o(p,q-1)).  
\]

In answer to the Problem \ref{prob:A} posed in Introduction,
 we see 
 that the F-method brings us to the Gegenbauer differential equation (\ref{eqn:Gdiff})
 in this case,
 and prove that all singular vectors
 can be described
 by using the classical orthogonal polynomials.  
In turn, 
 these orthogonal polynomials yield
 a generalization
 of Juhl's equivariant differential operators
 between sections of line bundles
 over two conformal manifolds (of different dimensions), 
 of which the original form  
 was constructed by completely different
 (combinatorial) techniques in \cite{ju}.

Concerning Problem \ref{prob:B} in Introduction, by using
the explicit singular vectors obtained
 by the F-method, we can determine the Jordan--H{\"o}lder series
 of the generalized Verma modules
 for $\gog$ with exceptional (discrete) values of parameters,
 when we restrict them to the reductive subalgebra
 ${\mathfrak {g}}'$,  
see Theorem \ref{T.3.8}.  

\subsection{Notation}\label{3.1}
Let $p\geq 1$ and $q\geq 2$.
We set  $n=p+q-2$ and
\[
\epsilon_i:=
\begin{cases}
1 
&
(1 \le i \le p-1), 
\\
-1
\qquad
&(p \le i \le n).  
\end{cases}
\]
We note $\epsilon_n=-1$
 because $q \ge 2$.  
Let us consider the quadratic form 
\begin{equation}
\label{eqn:pq}
2x_0\,x_{n+1}+\sum_{i=1}^{n}\epsilon_i x_i^2 \quad\text{for}\quad x=(x_0,\cdots ,x_{n+1}).
\end{equation}
on ${\mathbb{R}}^{p+q}\simeq\mathbb{R}^{n+2}$, 
 and set $G:=SO_o(p,q)$, 
 the identity component
 of the group preserving the
 quadratic form.  
Then the group $G$ preserves
 the null cone 
\begin{align}
\lN=\lN_{p,q}:=
\{ x=(x_0,\cdots ,x_{n+1})\in\mR^{p,q}\setminus \{0\}: 2x_0\,x_{n+1}+\sum_{i=1}^{n}\epsilon_i x_i^2=0
\}.
\end{align}
We define the parabolic subgroups 
 $P$ and $P_-$ to be the isotropy subgroups
of the line in the null cone $\lN$
 generated by $e_0={}^t\!(1,0,\ldots,0)$ and 
 $e_{n+1}={}^t\!(0,\ldots,0, 1)$, 
 respectively,
 and set $L := P \cap P_-$.
The homogeneous space $G/P$ is the  projective null cone $\mP\lN$ 
with its conformal structure.
We write $P=LN_+ = MAN_+$ for the  Langlands decomposition.
Let $\{E_j\}_{j=1,\cdots,n}$
 be the standard basis of the Lie algebra ${\mathfrak {n}}_+(\mathbb{R})$ of $N_+$,
 which we identify with ${\mathbb{R}}^n$, 
 and likewise ${\mathfrak {n}}_-(\mathbb{R})$
 with ${\mathbb{R}}^n$
 by using the standard basis:
 \begin{eqnarray}\label{eqn:Ncoord}
 \gon_+(\mathbb{R})\simeq \{Z:Z=(z_1,\ldots,z_n)\},
\quad
 \gon_-(\mathbb{R})\simeq\{X:{}^t\! X=(x_1,\ldots,x_n)\}.  
 \end{eqnarray}

The group $M$ is isomorphic to $SO(p-1,q-1)$,
 and acts on ${\mathfrak {n}}_+(\mathbb{R}) \simeq {\mathbb{R}}^n
 (={\mathbb{R}}^{p+q-2})$ 
 as the natural representation,
 preserving the quadratic form 
$\sum_{i=1}^{n}\epsilon_ix_i^2$.  
Denote by $\mJ$ the $n \times n$ matrix
 of this quadratic form with elements $\epsilon_i$ on the diagonal.

Elements in $G$ can be  written as block matrices with respect to the 
direct sum decomposition
\begin{equation}
\label{vectrac}
     \mR^{n+2}=\mR e_0\oplus 
\sum_{j=1}^{n}{\mathbb{R}}e_j
\oplus\mR e_{n+1}.  
\end{equation}
Then elements in the real parabolic subgroup $P$ are given by block triangular matrices
\begin{equation}
\label{eqn:pma}
  p=
\left(
\begin{array}{ccc}
\epsilon(m) a& \star & \star \\
0 & m & \star\\
0 & 0 & \epsilon(m) a^{-1} 
\end{array}
\right)
\end{equation}
with $a\in\mR_+,$ $m \in SO(p-1,q-1).$ Here $\epsilon(m)=+1$
 or $-1$
according to whether $m$ belongs to the identity component $SO_o(p-1,q-1)$
 or not.   
In the coordinates (\ref{eqn:Ncoord}) we have 
\begin{eqnarray}\label{2matrices}
n=\exp Z=
\begin{pmatrix}
1&Z&-\frac{|Z|^2}{2}\\
0&\Id&-\mJ {}^t\! Z\\
0&0&1
\end{pmatrix}\in N_+ ,
\,\,
x=\exp X=\begin{pmatrix}
1&0&0\\
X&\Id&0\\
-\frac{|X|^2}{2}&-{}^t\! X\mJ&1
\end{pmatrix}
\in N_-,
\end{eqnarray}
where we set $|X|^2:={}^t\! X\mJ X$  and $|Z|^2:=Z\mJ {}^t\! Z.$

\subsection{The representations $d\pi_\la$ and $d\tilde{\pi}_\la.$}
\label{3.2}
We are going now to apply the F-method
 explained in Section \ref{sec:2 correction} to the conformal case
of the general signature
 $(p,q)$, 
\[
  (G,G')=(SO_o(p,q), SO_o(p,q-1)).  
\]
The first goal is to describe the action of elements in  $\gon_+$ in terms of
differential operators acting
 on the \textquotedbl{Fourier image}\textquotedbl\,  of the generalized Verma module.
This can be deduced from the explicit form of the (easily described)
action of the induced representation in the non-compact picture.
We shall then find singular vectors in $M^\gog_\gop(\mC_\lambda)$
by using the F-method. Later on, we use them to obtain equivariant 
differential operators from $\Ind^G_P(\mC_\lambda)$ to $\Ind^{G'}_{P'}(\mC_{\lambda+K})$
 for some $K \in {\mathbb{N}}$
after switching $\lambda$ to $-\lambda$ corresponding to the dual representation.

For $\lambda \in {\mathbb{C}}$, 
 we define a family of differential operators
 on ${\mathfrak {n}}_-(\mathbb{R}) \simeq {\mathbb{R}}_x^n$
 by 
$$
Q_j(\la):=-\frac{1}{2} \epsilon_j |X|^2\partial_{x_j}
         +x_j(\lambda +\sum_kx_k\partial_{x_k}),\, j=1,\ldots, n, 
$$
and on its dual space ${\mathbb{R}}_{\xi}^n$
 by 
\begin{equation}
\label{dpti}
P_j(\lambda) := -i\left(\frac{1}{2}\epsilon_j\xi_j\square + (\lambda-E)\partial_{\xi_j}\right),\, j=1,\dots ,n,
\end{equation}
where
$$\square :=\partial^2_{\xi_1}+\dots +\partial^2_{\xi_{p-1}}-\partial^2_{\xi_{p}}-\dots -\partial^2_{\xi_{p+q-2}}$$
is the Laplace--Beltrami operator of signature $(p-1,q-1)$
 and  $E=\sum_{k=1}^{n}\xi_k\partial_{\xi_k}$ is 
the Euler homogeneity operator.
The mutually commuting operators $P_j(\lambda)$ $(1 \le j \le n)$
 were introduced in \cite[Chapter 1]{KM}, 
 and include 
 \textquotedbl{fundamental differential operators}\textquotedbl\, on the isotropic cone
 as a special case (i.e., \ $\lambda=1$).  

Let ${\mathbb{C}}_{\lambda}$ be the one-dimensional representation
 of $P$ given by $p \mapsto a^{\lambda}$,
 with the notation of \eqref{eqn:pma}.  
By a little abuse 
 of notation
 we shall use ${\mathbb{C}}_{\lambda}$
 to stand for the one-dimensional representation space
 ${\mathbb{C}}$.  
Let $\pi_{\lambda}\, (\equiv \pi_{\lambda,+})$ be the complex representation
 of $G$ on the unnormalized induced representation
$$
\operatorname{Ind}_P^G({\mathbb{C}}_{\lambda}):=\{
f\in \lC^{\infty}(G): f(gp)=a^{-\lambda}f(g)\quad\text{for any}\quad p\in P
\}  
$$
with $p$ in the notation (\ref{eqn:pma}).
The infinitesimal representation $d \pi_{\lambda}$
 of the Lie algebra ${\mathfrak {g}}$
 acts on 
$
   \lC^{\infty}({\mathfrak {n}}_-(\mathbb{R})) \otimes {\mathbb{C}}_{\lambda}.  
$
We write ${\mathbb{C}}_{\lambda}^{\vee}$
 for the contragredient representation of 
${\mathbb{C}}_{\lambda}$.  

\begin{lemma}
\label{lem:4.1}
The  elements $E_j\in\gon_+({\mathbb R})$ act on
$\lC^\infty(\gon_-(\mathbb{R})) \otimes \mC_{\lambda}$ by 
\begin{eqnarray}\label{dpio}
d\pi_\lambda(E_j)(g\otimes v)=Q_j(\la)(g)\otimes v
\quad
 \text{for } g\in
\lC^\infty(\gon_-(\mathbb{R})), v\in \mC_{\lambda}.  
\end{eqnarray}

The action of $d\tilde{\pi}_\lambda$
 on $\Pol[\xi_1,\ldots,\xi_n]\otimes \mC^\ch_{\la}$  is 
given by
\begin{eqnarray}\label{dtpio}
d\tilde{\pi}_\la(E_j)(f\otimes v)=P_j(\la)(f)\otimes v
\quad
\text{for }
f\in\Pol[\xi_1,\ldots,\xi_n],
v\in \mC_{\lambda}^\ch.  
\end{eqnarray}
\end{lemma}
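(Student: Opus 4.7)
The plan is to carry out Steps 1--3 of the F-method from Section \ref{sec:2 correction}.

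For \eqref{dpio} I would apply formula \eqref{dpi} directly. Set $n(t):=\exp(tE_j)\in N_+$ and expand $n(t)^{-1}\exp(X) = \tilde{x}(t)\,p(t)$ using the block matrices of \eqref{2matrices}. Matching the $(1,1)$-block and the first column of both sides to first order in $t$ produces the $A$-component $a(t) = 1 - t\,x_j + O(t^2)$ and $\tilde X(t) = X + t\bigl(x_j X - \tfrac{1}{2}\epsilon_j |X|^2 e_j\bigr) + O(t^2)$; the remaining components of $p(t)\in LN_+$ are either $O(t^2)$ or act trivially on the one-dimensional $\mC_\lambda$. Substituting into \eqref{dpi} and using $(p(t))^{-1}\cdot v = a(t)^{-\lambda}\, v$, differentiation at $t=0$ yields the three pieces $\lambda x_j$, $x_j \sum_k x_k \partial_{x_k}$, and $-\tfrac{1}{2}\epsilon_j |X|^2 \partial_{x_j}$, which assemble into $Q_j(\lambda)$.

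For \eqref{dtpio} I would follow Steps 2--3. The dual action $d\pi^\ch_\lambda(E_j)$ on $\lD'_{[o]}(N_-, \mC_\lambda^\ch)$ is $(d\pi^\ch_\lambda(E_j)T)(f) = -T(Q_j(\lambda)f)$; applying distributional integration by parts to each summand (with Leibniz for the smooth coefficient $|X|^2$) yields an explicit differential operator with polynomial coefficients in the $x_k$. I would then apply the Fourier transform \eqref{eqn:FT}: since $\gon_-$ is abelian, the intertwining rules $\mathcal{F}\circ x_k = -i\partial_{\xi_k}\circ\mathcal{F}$ and $\mathcal{F}\circ\partial_{x_k} = -i\xi_k\circ\mathcal{F}$ turn $|X|^2$ into $-\square$ and each $x_k$ into $-i\partial_{\xi_k}$. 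Using the commutations $[\square,\xi_j] = 2\epsilon_j\partial_{\xi_j}$ and $[\partial_{\xi_j}, E] = \partial_{\xi_j}$ to bring $\xi_j$ to the left of $\square$ and to collect the Euler operator $E$ in front of $\partial_{\xi_j}$, the expression collapses to $P_j(\lambda)$ as in \eqref{dpti}.

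The main obstacle will be the block-matrix bookkeeping in Step 1: one must propagate the $(n+2)\times(n+2)$ multiplication through to first order in $t$, keeping track of the quadratic form $\mJ$ in the top-right and bottom-left entries of \eqref{2matrices}. A helpful shortcut is that $a(t)$ and $\tilde X(t)$ are uniquely determined by the first column of $n(t)^{-1}\exp(X)$, so one need not solve for the $M$- and $N_+$-components of $p(t)$ explicitly; the $(3,\cdot)$-block identities then follow automatically from the uniqueness of the decomposition on the open Bruhat cell $N_-P$. The subsequent Fourier algebra in Step 3 is routine but sign-sensitive, especially in handling the second-order term $|X|^2\partial_{x_j}$ whose factors no longer commute after substitution.
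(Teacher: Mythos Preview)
Your proposal is correct and follows essentially the same route as the paper's proof: compute the $N_- P$ factorization of $n(t)^{-1}\exp(X)$ to first order in $t$ via the block matrices \eqref{2matrices}, read off $a(t)=1-tx_j+O(t^2)$ and $\tilde X(t)=X+t\bigl(x_jX-\tfrac12\epsilon_j|X|^2\,{}^t\!E_j\bigr)+O(t^2)$, differentiate in \eqref{dpi}, and then pass to $d\tilde\pi_\lambda$ by dualizing and applying the Fourier substitutions $x_j\mapsto -i\partial_{\xi_j}$, $\partial_{x_j}\mapsto -i\xi_j$. Your observation that only the first column of $n(t)^{-1}\exp(X)$ is needed to pin down $a(t)$ and $\tilde X(t)$ is a nice economy the paper does not make explicit; conversely, the paper records the first-order behavior of the $M$-component (which is $O(t)$, not $O(t^2)$, but acts trivially on $\mC_\lambda$), information that becomes relevant later in the spinor case of Section~\ref{sec:5}.
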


\vskip 1mm
\begin{proof}
For $n=\exp Z \in N_+$
 and $x=\exp X \in N_-$
we have from \eqref{eqn:pma} 
\begin{eqnarray*}
n^{-1} x =
\left( 
\begin{array}{ccc}
a & -Z+\frac{1}{2}|Z|^2\,{}^t\! X\mJ& -\frac{1}{2}|Z|^2 \\ 
 X-\frac{1}{2}|X|^2\mJ {}^t\! Z & \Id - \mJ {}^t\! Z\otimes {}^t\! X\mJ & \mJ {}^t\! Z\\
 -\frac{1}{2}|X|^2 & -{}^t\! X\mJ & 1 
\end{array}
\right), 
\end{eqnarray*}
where 
\begin{equation}\label{eqn:aZX}
   a:=1-Z\cdot X+\frac{|Z|^2|X|^2}{4}.
\end{equation}
If $Z$ and $X$ are sufficiently small, 
 then $a \ne 0$
 and we can define
\begin{eqnarray*}
\tilde x:=
 \left(
\begin{array}{ccc}
1 & 0 & 0 \\ 
a^{-1}(X-\frac{1}{2}|X|^2\mJ {}^t\! Z) & \Id & 0\\
-\frac{1}{2}a^{-1} |X|^2 & a^{-1}({}^t\! X\mJ+\frac{1}{2}|X|^2Z) & 1 
\end{array}
\right)\in N_-.  
\end{eqnarray*}
Then $p:=\tilde{x}^{-1}n^{-1}x\in P$,
and in the expression 
 \eqref{eqn:pma},
$a$ is given by \eqref{eqn:aZX} and $m$ satisfies $\epsilon (m)=1$.  
If we let $Z$ tend to be zero,
then the elements $\tilde{x},a$ and $m$ behave up to the first order in
$\|Z\|=(\sum\limits_{i=1}^{n}|z_i|^2)^{\frac{1}{2}}$ as 
\begin{eqnarray}
\label{eqn:4.8}
a\sim 1-Z\cdot X,\;\;\; m\sim \Id-\mJ {}^t\! Z\otimes {}^t\! X\mJ+X\otimes Z;\; 
\end{eqnarray}
\begin{eqnarray}
\tilde{x}=\exp \tilde{X},\;
\tilde{X}\sim(1+Z\cdot X)\left(X-\frac{|X|^2\mJ {}^t\! Z}{2}\right).
\end{eqnarray}
Taking $Z=tE_j$, we have for $X={}^t\! (x_1, \ldots, x_n)\in\gon_-({\mathbb R})\simeq\mR^n$,
\begin{align*}
& a =1-tx_j+o(t), 
\\
& \widetilde{X} =X+tx_j{}^t\! (x_1, \ldots, x_n)-\frac{1}{2}\epsilon_jt|X|^2\, {}^t\! E_j +o(t),
\end{align*}
where $o(t)$ denotes
the Landau symbol. Therefore, for $F\in \operatorname{Ind}_P^G({\mathbb{C}}_{\lambda})$
and $x=\exp (X)\in N_-$, we have
\begin{eqnarray}
(\mathrm{d}\pi_\lambda(E_j)F)(x) & = & \frac{d}{dt}|_{t=0}\,\, F(\exp(-tE_j)x) 
\nonumber \\ \nonumber 
& = &  \frac{d}{dt}|_{t=0}\,\, F(\tilde{x}p) 
\nonumber \\ \nonumber
& = &  \frac{d}{dt}|_{t=0}\,\, a^{-\lambda}F(\exp(\tilde{X})) 
\nonumber \\ \nonumber
& = &   \big (Q_j(\lambda)(F\circ \exp)\big )(X).
\end{eqnarray}

Thus we have proved the formula (\ref{dpio}) 
for the action $d\pi_\la(E_j).$  
 
 The action of $ d\tpi_\lambda(E_j)$ is computed in two steps.  The first step is to compute  the dual action $d\pi^\ch$ 
reversing the order in the composition of operators and adding sign changes
 depending on the order of the operator. In the second step we apply the distributional Fourier transform 
$$
x_j\mapsto -i\partial_{\xi_j},\, \partial_{x_j}\mapsto -i\xi_j
$$ 
preserving the order of operators in the composition.  
\end{proof}

\subsection
{The case $(G,G^\prime)=(SO_o(p,q),SO_o(p,q-1)).$ }
\label{juhlexample}

We realize $G^\prime=SO_o(p,q-1)$
 as the subgroup of $G=SO_o(p,q)$
 which leaves the basis vector $e_{n}={}^t\!(0,\ldots, 0, 1, 0)$ invariant. We recall $n=p+q-2$.
Then the parabolic subalgebra ${\mathfrak {p}}$
 is ${\mathfrak {g}}'$-compatible 
 in the sense of Definition \ref{def:3.2}, 
 and therefore $P':=P \cap G'$
 becomes a parabolic subgroup of $G'$
 with a Levi part $L':= L \cap G'$.  

The nilpotent radical 
$
   \gon'_-(\mathbb{R})\simeq\{{}^t\! X: X=(x_1,\ldots,x_{n-1})\}\simeq \mR^{n-1},
$
has codimension one in $\gon_-(\mathbb{R})$.
We endow $\gon_-(\mathbb{R}) \simeq \mR^{p+q-2}$
 with the standard flat quadratic form $(p-1,q-1)$, 
 denoted by $\mR^{p-1,q-1}$, 
 so that $G$ acts by local conformal transformations  
 on $\gon_-(\mathbb{R})$.  
The subspace $\gon_-'(\mathbb{R}) \simeq \mR^{n-1}$ has signature $(p-1,q-2).$

According to the recipe of the F-method in Section \ref{sec:2 correction},
 we begin by finding the $L'$-module
 structure on the space $\mbox{\rm Sol}$ 
 (see \eqref{eqn:sol2l} for the definition)
 in this case.
 
\subsubsection{The space of singular vectors}
Recall from \eqref{eqn:phi} the isomorphism 
 between the space of singular vectors
 $M_\gop^\gog(V^\ch)^{\gon'_+}$
 and the space  $\mbox{\rm Sol}\equiv\mbox{\rm Sol}(\mathfrak{g},\mathfrak{g}';V^{\ch})$
  of polynomial solutions
 to a system 
 of partial differential equations.  
We are now going to determine the set $\mbox{\rm Sol},$
 and thus we describe completely the set of singular vectors.

For $k \in {\mathbb{N}}$, 
 we denote by ${\mathcal{H}}^k({\mathbb{R}}^{p-1,q-1})$
 the space of harmonic polynomials
 of degree $k$, 
 namely, 
 homogeneous polynomials
 $f(\xi)$ of degree $k$
 satisfying
\[
  (\frac{\partial^2}{\partial \xi_1^2}
   +\cdots+
   \frac{\partial^2}{\partial \xi_{p-1}^2}
   -
   \frac{\partial^2}{\partial \xi_p^2}
   -\cdots-
   \frac{\partial^2}{\partial \xi_{p+q-2}^2}
)f=0.  
\]
Then the indefinite orthogonal group $O(p-1,q-1)$
 acts irreducibly
 on ${\mathcal{H}}^k({\mathbb{R}}^{p-1,q-1})$, 
 which then decomposes
\begin{equation}
\label{eqn:brsph}
{\mathcal{H}}^k({\mathbb{R}}^{p-1,q-1})
 \simeq
 \bigoplus_{j=0}^{k} {\mathcal{H}}^j({\mathbb{R}}^{p-1,q-2})
\end{equation}
when restricted to the subgroup $O(p-1,q-2)$.  

If $C^{\alpha}_\ell(x)$ is the Gegenbauer polynomial, 
then $x^{\ell}C^{\alpha}_\ell(x^{-1})$ is an even polynomial.
Hence we can define another polynomial
 ${\mathcal C}_\ell^\alpha(s)$
 by the relation (see Appendix for more details):
\begin{equation}
\label{eqn:Ctilde}
x^{\ell}C^{\alpha}_\ell(x^{-1})=\mathcal{C}_\ell^\alpha(x^2).  
\end{equation}

We define a homogeneous polynomial
 $f_K(\xi) \equiv f_{K,\lambda}(\xi_1, \cdots, \xi_n)$
 of degree $K$ ($K \in {\mathbb{N}}$) by 
\begin{equation}
\label{eqn:FK}
 f_{K,\lambda}(\xi_1, \cdots, \xi_n):=
\xi_n^{K} {\mathcal C}_K^{-\lambda -\frac{n-1}{2}}
\left(-
\frac
{\epsilon_n\sum_{i=1}^{n-1}\epsilon_i \xi_i^2}
{\xi_n^2}
\right).
\end{equation}
Since $f_{K,\lambda}$ vanishes when $\lambda\in\{\frac{1-n}{2}, \frac{3-n}{2}, \ldots,
[\frac{K-1}{2}]+\frac{1-n}{2}\}$ for $K\geq 1$, we renormalize a non-zero element
\begin{equation}
\label{eqn:wN}
  w_K\equiv w_{K,\lambda}\in \operatorname{Pol}[\xi_1,\cdots, \xi_n] 
  \otimes {\mathbb{C}}_{\lambda}
\end{equation}
by
\begin{align*}
& w_{2N,\lambda}\quad :=\frac{N!}{(-\lambda-\frac{n-1}{2})_N}f_{2N,\lambda}(\xi_1, \cdots, \xi_n)\otimes 1_\lambda,
\\
& w_{2N+1,\lambda}:=\frac{N!}{2(-\lambda-\frac{n-1}{2})_{N+1}}f_{2N+1,\lambda}(\xi_1, \cdots, \xi_n)\otimes 1_\lambda ,
\end{align*}
where $(\alpha)_k=\alpha(\alpha+1)\cdots (\alpha+k-1)$.
We let $A\simeq\mR_{>0}$ act on 
$\Pol[\gon_+]\otimes V^\ch\simeq\Pol[\xi_1, \cdots , \xi_n]\otimes {\mathbb C}_\lambda$
by
\[
f\otimes v\mapsto f(a^{-1}\cdot )\otimes a\cdot v\quad \text{for}\quad a>0 .
\]
We say $f\otimes v$ has weight $\mu$ if this action is given by the
multiplication of $a^\mu$.
Then the weight of $w_{K,\lambda}$ is $\lambda-K$. 
\begin{theorem}\label{basis}
Let $p\geq 1, q\geq 2, p+q>4$ and $(\mathfrak{g}(\mathbb R),\mathfrak{g}'(\mathbb R))=(\mathfrak{so}(p,q), \mathfrak{so}(p,q-1))$. 
We write $1_{\la}$ for a non-zero vector
 in the one-dimensional vector space $\mC_\lambda$ 
 with parameter $\lambda\in\mC$.

\begin{enumerate}
\item[(1)]
Let $P_j(\lambda)$ $(1\leq j\leq n)$ be the second-order differential 
operators defined in (\ref{dpti}). Then the space 
$\mbox{\rm Sol}(\mathfrak{g},\mathfrak{g}';{\mathbb C}_\lambda)$ 
(see (\ref{eqn:sol2l})) is given by 
\[
\mbox{\rm Sol}(\mathfrak{g},\mathfrak{g}';{\mathbb C}_\lambda)
 =\{f\otimes 1_\lambda \in \Pol[\xi_1, \cdots , \xi_n]\otimes {\mathbb C}_\lambda:\,
P_j(\lambda)f=0\, (1\leq j\leq n-1)\}.
\]
\item[(2)]
For $\lambda \not \in {\mathbb{N}}$, 
we have 
\[
\mbox{\rm Sol}(\mathfrak{g},\mathfrak{g}';\mathbb{C}_\lambda)
 =\bigoplus_{K=0}^{\infty}{\mathbb{C}} w_{K,\la}.
\]
In particular, the inverse Fourier transform (see (\ref{eqn:phi})) gives an $L'$-isomorphism:
\[
  M_{{\mathfrak {p}}}^{{\mathfrak {g}}}({\lambda})
  ^{{\mathfrak {n}}_+'} 
  \overset{\sim} {\underset{\varphi}{\leftarrow}}
 \mbox{\rm Sol}(\mathfrak{g},\mathfrak{g}';\mathbb{C}_\lambda)
 =\bigoplus_{K=0}^{\infty}{\mathbb{C}} w_{K,\la}.  
\]

\item[(3)]
 For $\la\in\mN$, we have 
\[
\mbox{\rm Sol}(\mathfrak{g},\mathfrak{g}';\mathbb{C}_\lambda)
 =\bigoplus_{K=0}^{\infty}{\mathbb{C}} w_{K,\la}
  \oplus \bigoplus_{j=1}^{\la +1 }H_j',
	\]
where $H'_j \equiv H_{j,\lambda}'$ $(1\leq j\leq \lambda +1)$ is the subspace
 of ${\mathcal{H}}^{\lambda+1}({\mathbb{R}}^{p-1,q-1})\otimes {\mathbb{C}}_{\lambda}$
 corresponding to the summand ${\mathcal{H}}^j({\mathbb{R}}^{p-1,q-2})$
 in \eqref{eqn:brsph}. In particular, the inverse Fourier transform induces  
 an $L'$-isomorphism:
\[
  M_{{\mathfrak {p}}}^{{\mathfrak {g}}}({\lambda})
  ^{{\mathfrak {n}}_+'} 
  \overset{\sim} {\underset{\varphi}{\leftarrow}}
 \mbox{\rm Sol}(\mathfrak{g},\mathfrak{g}';\mathbb{C}_\lambda)
 =\bigoplus_{K=0}^{\infty}{\mathbb{C}} w_{K,\la}
  \oplus \bigoplus_{j=1}^{\la +1 }H_j'.  
\]
Furthermore, for each $j=1,\ldots,\lambda+1,$ the image   
$\varphi(H'_j)$ is contained in the $\gog'$-submodule
 generated
by the  vector $\varphi(w_{\lambda+1-j,\lambda})$.  
\end{enumerate}
\end{theorem}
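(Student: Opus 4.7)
For Part (1), the result is immediate from Lemma \ref{lem:4.1}: since $G' = SO_o(p,q-1)$ is the stabilizer of $e_n$, the nilradical $\gon_+'$ is spanned by $\{E_1, \ldots, E_{n-1}\}$; by \eqref{dtpio} the defining condition $d\tilde\pi_\lambda(Z)f = 0$ for all $Z \in \gon_+'$ is equivalent to $P_j(\lambda)f = 0$ for $j = 1, \ldots, n-1$.

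For Parts (2) and (3), the plan is to analyze this system by separation of variables, exploiting the $L'$-equivariance. The Levi $L'$ contains $A \times M''$ where $M'' \simeq SO(p-1,q-2)$ acts on $\xi' = (\xi_1, \ldots, \xi_{n-1})$ and fixes $t := \xi_n$. The operators $P_j(\lambda)$ for $j<n$ are $M''$-equivariant (transforming as the vector representation of $M''$), so $\mbox{\rm Sol}$ is both $A$-graded by the total homogeneity degree $K$ and decomposes into $M''$-isotypic components. By the Fischer decomposition of $\Pol[\xi_1,\ldots,\xi_{n-1}]$ with respect to $r := \sum_{i<n} \epsilon_i \xi_i^2$ and $\square' := \sum_{i<n} \epsilon_i \partial_{\xi_i}^2$, the $M''$-isotypic component of type $\mathcal{H}^k(\mathbb{R}^{p-1,q-2})$ in homogeneous degree $K$ is spanned by polynomials of the form $f = h(\xi')\, t^{K-k}\phi(u)$, with $h$ a fixed $\square'$-harmonic polynomial of degree $k$, $u = r/t^2$, and $\phi$ a polynomial in $u$ of degree at most $\lfloor (K-k)/2 \rfloor$.

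Substituting this ansatz into $P_j(\lambda)f = 0$ and using $\square' h = 0$ together with the homogeneity identity $(\lambda - E)\partial_{\xi_j}f = (\lambda - K + 1)\partial_{\xi_j}f$, the entire system collapses, after cancelling the common factor $\epsilon_j \xi_j$, to a single Gauss-hypergeometric ODE for $\phi(u)$. For $k = 0$ this ODE coincides with the equation satisfied by $\mathcal{C}_K^{-\lambda-(n-1)/2}(u)$, yielding for every $K \in \mathbb{N}$ a one-dimensional polynomial solution space, namely $\mathbb{C} w_{K,\lambda}$. For $k \geq 1$, indicial analysis shows that the ODE admits a polynomial solution only when the relation between $\lambda$, $k$, $K$ forces $\lambda \in \mathbb{N}$; in particular no such solutions arise when $\lambda \notin \mathbb{N}$, proving Part (2).

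For Part (3), a transparent source of the new solutions is the following observation: when $\lambda \in \mathbb{N}$, every $f \in \mathcal{H}^{\lambda+1}(\mathbb{R}^{p-1,q-1}) \otimes \mathbb{C}_\lambda$ automatically lies in $\mbox{\rm Sol}$, because $\square f = 0$ by harmonicity and $(\lambda - E)\partial_{\xi_j}f = 0$ since $\partial_{\xi_j}f$ is homogeneous of degree $\lambda$. Restricting to $M'' = SO(p-1,q-2)$ and applying \eqref{eqn:brsph} gives $\bigoplus_{j=0}^{\lambda+1} H_j'$; the $j = 0$ piece is the zonal harmonic along $e_n$, which coincides up to scalar with $w_{\lambda+1,\lambda}$ (via the classical identification of zonal harmonics with Gegenbauer polynomials), so only the summands $H_j'$ with $j \geq 1$ are genuinely new. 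That these together with the $w_{K,\lambda}$ exhaust $\mbox{\rm Sol}$ follows from the ODE classification of polynomial solutions for each pair $(k, K)$. The final assertion — that $\varphi(H_j')$ lies in $U(\gog')\cdot\varphi(w_{\lambda+1-j,\lambda})$ — is the most delicate point: one identifies the $\gog'$-submodule generated by $\varphi(w_{\lambda+1-j,\lambda})$ as a quotient of a generalized Verma module $M_{\gop'}^{\gog'}(W^\ch)$ for an appropriate $L'$-type $W$, which at $\lambda \in \mathbb{N}$ becomes reducible; comparing $L'$-types and weights via Theorem \ref{thm:3.5} then forces the new $\gog'$-singular vector in this submodule to coincide with $\varphi(H_j')$ up to scalar. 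The main obstacles are the explicit ODE computation and its identification with the hypergeometric/Gegenbauer equation for general $k$, and this last submodule-generation step.
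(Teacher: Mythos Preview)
Your argument for Part~(1) and your observation that $\mathcal{H}^{\lambda+1}(\mathbb{R}^{p-1,q-1})\otimes\mathbb{C}_\lambda\subset\mbox{\rm Sol}$ when $\lambda\in\mathbb{N}$ both match the paper exactly. The difference lies in how one proves that \emph{only} $M''$-invariant solutions occur when $\lambda\notin\mathbb{N}$.

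The paper does not use the Fischer decomposition or an indicial analysis over all $k$. Instead it observes the algebraic identity
\[
\epsilon_i\xi_i P_j(\lambda)-\epsilon_j\xi_j P_i(\lambda)
=(E-\lambda-1)(\epsilon_j\xi_j\partial_{\xi_i}-\epsilon_i\xi_i\partial_{\xi_j}),
\]
so that any homogeneous $f\in\mbox{\rm Sol}$ of degree $K$ satisfies $(K-\lambda-1)\cdot(\text{infinitesimal rotation})f=0$. Hence either $K=\lambda+1$ or $f$ is $SO_o(p-1,q-2)$-invariant, and one is reduced to the single $k=0$ ODE without ever writing down the equations for $k\ge 1$. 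Your Fischer-decomposition route can be made to work, but your claim that for $k\ge1$ the system collapses ``after cancelling the common factor $\epsilon_j\xi_j$'' to a \emph{single} ODE is not right: for $f=h(\xi')g(r,t)$ with $h\in\mathcal{H}^k$, the quantity $P_j(\lambda)f$ has a component along $\partial_{\xi_j}h\in\mathcal{H}^{k-1}$ and a component along the $\mathcal{H}^{k+1}$-part of $\xi_j h$, giving a coupled pair of ODEs rather than one. The paper's cross-commutation trick bypasses this entirely.

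For the final assertion $\varphi(H_j')\subset U(\gog')\varphi(w_{\lambda+1-j,\lambda})$, your sketch via Theorem~\ref{thm:3.5} is too vague; that theorem gives identities in the Grothendieck group, not containments of specific singular vectors. The paper argues concretely: since $\gon_-'$ acts by multiplication by $\xi_1,\ldots,\xi_{n-1}$ in the F-picture, one has $M_j=\varphi(\Pol[\xi_1,\ldots,\xi_{n-1}]\,w_{\lambda+1-j,\lambda})$. One checks that the coefficient of $\xi_n^K$ in $w_{K,\lambda}$ is nonzero for $K\le\lambda+1$, so inductively $\sum_{i=0}^{\lambda+1}M_i$ contains all of $\Pol^{\lambda+1}[\xi_1,\ldots,\xi_n]\otimes 1_\lambda$, hence in particular each $\varphi(H_j')$. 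One then computes the $\mathfrak{Z}(\gog')$-infinitesimal characters of the $M_i$ and shows they are pairwise distinct (so the sum is direct) and that exactly $M_j$ matches the infinitesimal character of the $\gog'$-module generated by $\varphi(H_j')$.
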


\vskip 1mm
\begin{proof}
We apply the F-method as follows.
By \eqref{dpti}, 
 the equation $d \tilde \pi(Z)f=0$
 for $Z \in \gon_+'$
 amounts to a system of differential equations
\begin{eqnarray}\label{dtpis}
P_j(\lambda)f = 0 \,\,\, \text{for }\,j=1,\dots ,n-1.
\end{eqnarray}
Hence we have shown the first statement. 

In order to analyze 
$\mbox{\rm Sol}(\mathfrak{g},\mathfrak{g}';\mathbb{C}_\lambda)$
explicitly, we first prove that if $\lambda \notin {\mathbb{N}}$
 then any polynomial solution $f$ to \eqref{dtpis}
 is $SO_o(p-1,q-2)$-invariant.  
Since the operators $P_j(\lambda)$ decrease the homogeneity
 by one, 
we can assume
 $f\in\mbox{\rm Sol}(\mathfrak{g},\mathfrak{g}';\mathbb{C}_\lambda)$ to be homogeneous without loss 
 of generality.
It follows from \eqref{dtpis}
 that 
\[
(\epsilon_i\xi_iP_j(\lambda)-\epsilon_j\xi_jP_i(\lambda))f=0, 
\]
 which amounts to 
\begin{eqnarray}
(E-\lambda-1)(\epsilon_j\xi_j\partial_{\xi_i}
-\epsilon_i\xi_i\partial_{\xi_j})f=0
\end{eqnarray}
for all $i,j=1,\dots ,n-1.$ We recall $n=p+q-2$.
Hence if $\la\not= \deg f-1$, then $(\epsilon_j\xi_j\partial_{\xi_i}
-\epsilon_i\xi_i\partial_{\xi_j})f=0$, namely, $f$ is a
polynomial invariant under $SO_o(p-1,q-2)$.  

Next,
 let us solve the equation \eqref{dtpis}.  
\vskip 1mm
\noindent
Case 1.   $SO_o(p-1,q-2)$-invariant solutions.

As we saw above, 
 this is always the case when $\lambda\not\in\mN.$
Since $p+q-3\geq 2$, classical invariant theory says
 that any $SO_o(p-1,q-2)$-invariant, homogeneous 
polynomial $f$ in $n$-variables of degree $K$
can be written in the form 
\[
     f(\xi_1,\ldots,\xi_n)=\xi_n^K\,h(t),
\]
where 
\[
t=\frac{\epsilon_n|\xi'|^2}{\xi_n^2},\,|\xi'|^2=\sum_{i=1}^{n-1}\epsilon_i\xi_i^2
\]
and $h$ is a polynomial of degree $N$
(depending on the parity of $K$,  either $K=2N$ or $K=2N+1$).

Hence we look for a solution
 of the form $f=\xi_n^{K}h(t),\;t=\frac{\epsilon_n|\xi'|^2}{\xi_n^2}.$
We get immediately
$$
\pa_j f=\xi_n^{K-2}2\epsilon_n \epsilon_j \xi_j\,h',\;
\pa_j^2 f
=\xi_n^{K-2}\left(4h''\frac{\xi_j^2}{{\xi_n}^2}+\epsilon_n\epsilon_j2h'\right),
\,j=1,\ldots,n-1,
$$
\begin{align*}
 \square'f&=\epsilon_n \xi_n^{K-2}\left(4h^{\prime\prime}t+2(n-1)\,h^\prime\right),\;
\pa_{\xi_n} f=\xi_n^{K-1}\left(K\,h-2\,h^\prime t\right),
\\
\eps_n \pa_{\xi_n}^2 f &=
 \eps_n\xi_n^{K-2}\left(K(K-1)\,h+(-4K+6)\,h't+4\,h''t^2\right),
\\
\square f&=\eps_n\xi_n^{K-2}\left(4t(1+t)\,h''+\left(2(n-1) +t(-4K+6)\right)\,
h^\prime+K(K-1)\,h\right),
\\
(\la-E)\pa_j f&=\epsilon_n\epsilon_j\xi_j\xi_n^{K-2}\left(2\la-2K+2\right)h^\prime.
\end{align*}
Collecting terms together and
 cancelling the scalar multiple
 by $\frac{1}{2}\epsilon_n\epsilon_j\xi_j\xi_n^{K-2}$, 
 the partial differential equation (\ref{dtpis})
 induces the following ordinary differential equation
 for $h(t)$ which is independent of $j$ ($1\leq j\leq n-1$):
 \begin{eqnarray}\label{obyc}
\label{hyperoutput}
R(K,-\lambda-\frac{n-1}{2})h(t)
=0,
\end{eqnarray}
where we define a differential operator $R(l,\alpha)$ of second order by 
\begin{eqnarray}
R(l,\alpha):=4t(1+t)\frac{d^2}{dt^2}+((6-4l)t+4(1-\alpha-l))\frac{d}{dt}+l(l-1).
\label{eqn:Rla}
\end{eqnarray}

Hence the resulting system of equations (\ref{dtpis}) (for $j=1,\ldots,n-1$) reduces
 to a single ordinary differential equation
 of second order for $h(t)$.

We set $g(x):=x^Kh(-\frac{1}{x^2})$, 
then $g(x)\in\mathrm{Pol}_K[x]_{\mathrm{even}}$ belongs to
\[
\mathrm{Pol}_K[x]_{\mathrm{even}}:={\mathbb C}\text{-span}\Big\{ x^{K-2j}:
\, 0\leq j\leq \Big[\frac{K}{2}\Big] \Big\}.
\] 
It follows from Lemma \ref{lem:ghGegen} that $g(x)$ is a scalar multiple of 
the renormalized Gegenbauer polynomial
\begin{align*}
\widetilde{C}^\alpha_K(x):=\frac{1}{(\alpha)_{[\frac{K+1}{2}]}}C^\alpha_K(x)\quad 
\text{for}\quad \alpha=-\lambda-\frac{n-1}{2},
\end{align*}
and $h(t)=\widetilde {\mathcal C}_{K}^{-\lambda-\frac{n-1}{2}}(-t)$
 up to scalar (see \eqref{eqn:Ctilde}.)  
Hence $f(\xi_1,\dots ,\xi_n)$ is a scalar multiple of 
$\frac{1}{(-\lambda-\frac{n-1}{2})_{[\frac{K+1}{2}]}}f_{K,\lambda}$.
\vskip 1mm
\noindent
Case 2. $\la\in \mN.$

We set $k:=\lambda+1$.  
Suppose $f$ is a homogeneous polynomial.  
As we saw,
 if $f\otimes 1_\lambda \in \mbox{\rm Sol}(\gog, \gog'; {\mathbb C}_\lambda)$, 
then $f$ is $SO_o(p-1,q-2)$-invariant
 as far as 
 $\deg f \ne k$.  
Suppose now
 $\deg f=k$.  
Then $(\la-E)\pa_{\xi_j}\,f$ vanishes, because $\partial_{\xi_j}f$ is homogeneous 
of degree $k-1\, (=\lambda)$.  
In view of the formula (\ref{dpti}) of $P_j(\lambda)$ for homogeneous polynomials $f$
 of degree $k$, 
 $f\otimes 1_\lambda \in \mbox{\rm Sol}(\mathfrak{g},\mathfrak{g}';\mathbb{C}_\lambda)$
 if and only if $f$ satisfies
 the single equation $\square f=0$,  
namely,
 $f \in {\mathcal{H}}^k({\mathbb{R}}^{p-1,q-1})$. Hence we have shown 
\begin{eqnarray}\nonumber
\mbox{\rm Sol}(\mathfrak{g},\mathfrak{g}';\mathbb{C}_\lambda)
  &=& \bigoplus_{K=1}^{\infty}{\mathbb{C}} w_{K,\la}
  \oplus {\fam2 H}^{\lambda+1}({\mathbb R}^{p-1,q-1})\otimes{\mathbb C}_\lambda
\\ \nonumber
 &=& \bigoplus_{K=0}^\infty{\mathbb C}w_{K,\lambda}\oplus\bigoplus_{j=1}^{\lambda+1}H'_j .
\end{eqnarray}
For $ j=0,\ldots,k\, (= \lambda+1),$ let $M_j$  be the  $\gog'$-submodules in $M^\gog_\gop(\lambda)$ 
 generated by the singular vectors  $\varphi(w_{\lambda+1-j,\lambda}).$ 
Finally, let us prove
\[
\varphi(H'_j)\subset M_j\quad (j=0,1, \cdots, \lambda +1).
\]
This is deduced from the following three claims, which
we are going to prove:
\begin{itemize}
\item
$\varphi(H'_j)\subset M_0+M_1+\cdots +M_{\lambda+1}.$
\item
$\sum\limits_{i=0}^{\lambda+1}M_i$ is a direct sum of $\gog'$-modules.
\item
Among $\{M_0, M_1,\cdots ,M_{\lambda+1}\}$, $M_j$ is the unique $\gog'$-submodule
that has the same ${\mathfrak Z}(\gog')$-infinitesimal character
with that of the $\gog'$-module generated by $\varphi(H'_j)$.
\end{itemize}
In the F-method, the Lie algebra ${\mathfrak {n}}_-'$
 acts on $\operatorname{Pol}[\xi_1, \cdots, \xi_{n-1}]
 \otimes {\mathbb{C}}_{\lambda}$
 by the multiplication 
of a linear function of $\xi_1, \cdots, \xi_{n-1}$. Hence we have
\[
M_j=\varphi (\operatorname{Pol}[\xi_1, \cdots, \xi_{n-1}]w_{\lambda+1-j, \lambda}).
\]
Thus the first claim will be proved if we show
\begin{eqnarray}
\label{eqn:Polw}
\operatorname{Pol}^{\lambda+1}[\xi_1, \cdots, \xi_{n}]\otimes 1_\lambda\subset
\sum\limits_{i=0}^{\lambda +1}\operatorname{Pol}[\xi_1, \cdots, \xi_{n-1}]w_{\lambda+1-i, \lambda},
\end{eqnarray}
where $\operatorname{Pol}^{l}[\xi_1, \cdots, \xi_{n}]$ stands for the space of 
homogeneous polynomials of degree $l$. To see (\ref{eqn:Polw}), we observe that the coefficient 
of $\xi_n^K\otimes 1_\lambda$ in the polynomial $w_{K,\lambda}$ (see (\ref{eqn:wN}))
is a non-zero multiple of
\begin{eqnarray}
\nonumber
\frac{(\alpha)_K}{(\alpha)_{[\frac{K+1}{2}]}}=(\alpha+[\frac{K+1}{2}])\cdots(\alpha+K-2)(\alpha+K-1)
\quad \text{with}\quad \alpha=-\lambda-\frac{n-1}{2},
\end{eqnarray}
which does not vanish if $K\leq \lambda+1$. Hence we see by induction on $l$ that 
$\sum\limits_{K=0}^{l}\operatorname{Pol}[\xi_1, \cdots, \xi_{n}]w_{K, \lambda}$
coincides with 
$\sum\limits_{K=0}^{l}\operatorname{Pol}[\xi_1, \cdots, \xi_{n-1}]\xi_n^K\otimes 1_\lambda$
for all $l\leq \lambda+1$. In particular, (\ref{eqn:Polw}) is proved and the first claim
is shown.

To see the second and third claims, let
$\mathfrak{j}'$ be a Cartan subalgebra of 
$\gog'=\mathfrak{so}(n+1,{\mathbb C})\simeq\mathfrak{so}(p,q-1)\otimes_{\mathbb R}{\mathbb C}$.
We identify
 $({\mathfrak {j}}')^{\ast}
 \simeq {\mathbb{C}}^{[\frac{n+1}{2}]}$
 via the standard basis
 $\{e_1, e_2, \cdots, e_{[\frac{n+1}{2}]}\}$
 so that ${\mathbb{C}}_{\lambda}$ is given by 
 $\lambda e_1$ ($\lambda \in {\mathbb{C}}$)
 and similarly for ${\mathfrak {j}}^{\ast}
 \simeq {\mathbb{C}}^{[\frac{n+2}{2}]}$.  
In the coordinates
 we have 
\[
  \rho'=
  (\frac{n-1}{2}, \frac{n-3}{2}, \cdots, 
   \frac{n-1}{2}-[\frac{n-1}{2}])
  \in 
  ({\mathfrak {j}}')^{\ast}.   
\]
Then the $\mathfrak{Z}(\gog')$-infinitesimal character of $M_j$ is given by 
\[
(\lambda-(\lambda+1-j),0, \cdots ,0)+\rho'=
(j+\frac{n-3}{2}, \frac{n-3}{2}, \frac{n-5}{2}, \cdots, 
   \frac{n-1}{2}-[\frac{n-1}{2}])
\]
which are distinct for $j=0,1, \cdots , \lambda+1(=k)$ in 
$({\mathfrak {j}}')^{\ast}/W(\gog')$ if $n\geq 3$. Therefore,
the sum $M_0+M_1+\cdots + M_{\lambda+1}$ is a direct sum if $n\geq 3$.

Let us now consider  one fixed summand $H'_j$ for $j=0,1,\ldots,k.$  
Since such $f$ is homogeneous of degree $k(=\lambda+1)$, the  
$\mathfrak{Z}(\gog')$-infinitesimal
character of the $\gog'$-module
 generated by $\varphi(H'_j)$ is given by 
\[
(-1, j, 0, \cdots , 0)+\rho'=
(\frac{n-3}{2}, j+\frac{n-3}{2}, \frac{n-5}{2}, \cdots, 
   \frac{n-1}{2}-[\frac{n-1}{2}]),
\]
which coincides with that of $M_j$. 
Hence we have shown $\varphi(H'_j)\subset M_{j}$.
\end{proof}
 
Theorem \ref{basis} gives a complete description
 of the set of singular vectors invariant with respect to
 $SO_o(p-1,q-2). $ 
In the positive signature, which corresponds to the 
 $p=1$ case,
 $SO(q-2)$-invariant
 singular vectors   were found by A. Juhl
 in \cite[Chapter 5]{ju} by a heavy combinatorial computation
 using recurrence relations.  
 Notice that the Juhl's computations
 depend on the parity of $K$, and the higher dimensional
components of singular vectors are more difficult
 to detect by algebraic methods.  
We would like to emphasize that
 our approach is very different and allows us 
 a uniform treatment.  

An important point is that the F-method gives a complete
description of the set of singular vectors and its structure
as an $L'$-module.  
In the second part of Theorem \ref{basis},
we describe also all $L'$-submodules of higher dimensions,
 which will be used in the complete description
 of the composition series
  when the restriction is not completely reducible
 (see Theorem \ref{T.3.8}). 

\subsubsection{Equivariant differential operators for the conformal group}

For $ 0\leq j\leq N-1$, we set 
\begin{eqnarray}\label{acoef}
& & a_j(\la)\equiv a_j^{N,n}(\lambda):=\frac{(-2)^{N-j} N!}{j!(2N-2j)!}
\prod_{k=j}^{N-1}(2\la-4N+2k+n+1),  
\\
& & \nonumber
b_j(\la)\equiv b_j^{N,n}(\lambda):=\frac{(-2)^{N-j} N!}{j!(2N-2j+1)!}\prod_{k=j}^{N-1}(2\la-4N+2k+n-1), 
\\ \label{bcoef}
\end{eqnarray}
and $a_N(\lambda)=b_N(\lambda):=1$.
Here we have adopted the same notation with Juhl's book \cite{ju}, i.e.
$[\text{loc. cit.}, \,\text{Corollary}\,\, 5.1.1]$ for $a_j(\lambda)$ and 
$[\text{loc. cit.}, \,\text{Corollary}\,\, 5.1.3]$ for $b_j(\lambda)$ for 
the convenience of the reader.

Then it follows from (\ref{eqn:Caj}) and  (\ref{eqn:Cbj}) in Appendix that
the formula \eqref{eqn:FK} amounts to
\begin{eqnarray}\label{eqn:weven}
& & w_{2N,\lambda}\quad =(\sum\limits_{j=0}^Na_j(\lambda)(-|\xi'|^2)^j\xi_n^{2N-2j})\otimes 1_\lambda, 
\\ \label{eqn:wodd}
& & w_{2N+1,\lambda}=(\sum\limits_{j=0}^Nb_j(\lambda)(-|\xi'|^2)^j\xi_n^{2N-2j+1})\otimes 1_\lambda,
\end{eqnarray}
because $\epsilon_n=-1$.

As stated in Theorem {\ref{folkloreg}, 
the homomorphism between the generalized Verma modules 
of the Lie algebras $\gog'$ and $\gog$ induces
an equivariant differential operator
 acting on local sections of induced homogeneous bundles 
on the generalized flag manifolds of the Lie groups $G'$ and $G$. 
We shall describe these differential operators
 using the non-compact picture
of the induced representation.
The restriction from $G$ to $N_-\,P$ 
induces the non-compact model of the induced representation
by the map
$$
\beta: {\Ind}_P^G(\mC_\lambda)\hookrightarrow \lC^\infty(N_-)\simeq \lC^\infty(\mR^{p-1,q-1}).
$$
Via the injection $\beta$, we get the following explicit form of $G'$-equivariant
differential operator by replacing $\lambda$ with $-\lambda$.

\begin{theorem}
\label{thm:4.3}
Let $(G,G^\prime)=(SO_o(p,q),SO_o(p,q-1))$, $p\geq 1$, $q\geq 2$, $n=p+q-2>2$, and $\lambda\in\mC$.
\begin{enumerate}
\item[(1)]
The singular vectors $\varphi(w_{2N,-\lambda})
 \in M_{{\mathfrak {p}}}^{{\mathfrak {g}}}({-\lambda})$
 in Theorem \ref{basis} (1) induce
(in the non-compact picture) the family 
$
\mathscr{D}_{2N}(\la): \lC^\infty(\mR^{p-1,q-1})\to \lC^\infty(\mR^{p-1,q-2})
$
 of $G'$-equivariant differential operators
 given by $\mathscr{D}_{2N}f=(D_{2N}(\lambda)f)|_{x_n=0}$, where
 $D_{2N}(\lambda)$ is a differential operator 
 of order $2N$ defined as follows:
$$
D_{2N}(\la):=\sum_{j=0}^N a_j(-\la)(-\square')^j
(\frac{\pa}{\pa\,x_n})^{2N-2j} .  
$$
Here
  $\square'=
	\frac{\partial^2}{\partial x_1^2}
   +\cdots+
   \frac{\partial^2}{\partial x_{p-1}^2}
   -
   \frac{\partial^2}{\partial x_p^2}
   -\cdots-
   \frac{\partial^2}{\partial x_{n-1}^2}
	$ 
	is the (ultra)-wave operator on $\mR^{p-1,q-2}$.

The infinitesimal intertwining property of 
$\mathscr{D}_{2N}(\lambda)$ is given explicitly as
\begin{eqnarray}
\mathscr{D}_{2N}(\lambda)d\pi^G_{\lambda}(X)
=d\pi^{G'}_{\lambda+2N}(X)\mathscr{D}_{2N}(\lambda)
\quad\text{for }\,\,
 X\in\gog'.  
\end{eqnarray}

Similarly,  
the singular vectors $\varphi(w_{2N+1,-\lambda})\in M_\gop^\gog({-\lambda})$
 in Theorem \ref{basis} (1) define the family $\mathscr{D}_{2N+1}(\la):
 \lC^\infty(\mathbb{R}^{p-1,q-1}) \to \lC^\infty(\mathbb{R}^{p-1,q-2})$
 of $G'$-equivariant differential operators induced by
$$
D_{2N+1}(\la):=\sum_{j=0}^N b_j(-\la)(-\square')^j
(\frac{\pa}{\pa\,x_n})^{2N-2j+1}.  
$$
The operator $\mathscr{D}_{2N+1}(\la)$ intertwines
 $d \pi_{\lambda}^G$ and $d \pi_{\lambda+2N+1}^{G'}$.  
 
\item[(2)] The branching law
${\mathcal{H}}^k({\mathbb{R}}^{p-1,q-1})
 \simeq
 \bigoplus_{j=0}^{k} {\mathcal{H}}^j({\mathbb{R}}^{p-1,q-2})$
(see (\ref{eqn:brsph})) is multiplicity-free, 
and we denote by $\pi^k_j$ the corresponding orthogonal projections.
Note that the $O(p-1,q-1)$-modules ${\mathcal{H}}^k({\mathbb{R}}^{p-1,q-1})$ are 
isomorphic to the $k$-th symmetric and trace-free part
$\odot^k_0(\mathbb{R}^{p-1,q-1})$ of the defining representation of $O(p-1,q-1)$.

{For $-\lambda\in\mathbb{N},$ set  $k=|\lambda|+1.$
Then we have $G$-equivariant differential operator
\[
\mathscr{D}_k: \lC^\infty(\mathbb{R}^{p-1,q-1}) \to
  \lC^\infty(\mathbb{R}^{p-1,q-1}) \otimes
  \mathcal{H}^k(\mathbb{R}^{p-1,q-1}),
\]
 given by 
the set $f\otimes 1_{-\lambda},\,f\in\mathcal{H}^k(\mathbb{R}^{p-1,q-1})$ of singular vectors.
In the non-compact picture, the operator $\mathscr{D}_k$ corresponds to
the (symmetric) trace-free part of the multiple gradient $\sigma\mapsto\nabla_{(a}\ldots\nabla_{b)_0}\sigma$ (number of indices being $k$).
}

Moreover, for each $j=0, 1,2,\dots,k,$ there are $G'$-equivariant differential operators  
\[
\mathscr{D}_{k,j}: \lC^\infty(\mathbb{R}^{p-1,q-1}) \to
  \lC^\infty(\mathbb{R}^{p-1,q-2}) \otimes
  \mathcal{H}^j(\mathbb{R}^{p-1,q-2}); 
  \]
 given by the composition $\mathscr{D}_{k,j}=\pi^k_j\circ   \mathscr{D}_k,
$ restricted to $\mathbb{R}^{p-1,q-2}.$
 
\end{enumerate}
\end{theorem}

\vskip 1mm
\begin{proof}
The first part of the theorem follows immediately from Theorem \ref{basis} (2)
 and from the fact that an element $X\in \gon_-(\mathbb{R})$
acts on functions in $\lC^\infty(\gon_-(\mathbb{R}))$ by the derivative in the direction $X.$

The second part
follows from Theorem \ref{basis} (3)
and the well-known classification of  differential
operators on the sphere $S^{n}\simeq G/P$ equivariant with respect to the action of the conformal group  (see, e.g.,  \cite[Sections 8.6--8.9]{slovak}).
\end{proof}

\begin{remark}\label{rem:4.4.}
Denote the induced representation ${\Ind}_P^G(\mC_\lambda)$ 
by $\pi_{\lambda,+} (\equiv \pi_\lambda)$ and denote 
the representation ${\Ind}_P^G(\mathrm{sgn}\otimes \mC_\lambda)\equiv{\Ind}_P^G((-1)\otimes \mC_\lambda)$
 induced from $p\mapsto \epsilon(m) a^\lambda$
 by $\pi_{\lambda,-}$
 (see \eqref{eqn:pma} for notation). They
give rise to the same action of the Lie algebra, but on the level of the induced representations 
we have the following intertwining relation
\begin{eqnarray}
\mathscr{D}_K(\lambda)\pi^G_{\lambda,\epsilon_1}(g')
=\pi^{G'}_{\lambda+K,\epsilon_2}(g')\mathscr{D}_K(\lambda),
\end{eqnarray}
where $g'\in G'$ and $\epsilon_1\cdot\epsilon_2=(-1)^K$, $K\in{\mathbb N}$.

The results obtained in the first part of the Theorem \ref{thm:4.3} generalize those obtained
 in \cite[Chapter 5]{ju} for the positive definite signature. 
Our proof based on the F-method is completely 
 different from \cite{ju},
 and is significantly
 shorter even in the $p=1$ case.

The operator $\mathscr{D}_k$, defined in 
the second part of Theorem \ref{thm:4.3},
  is the first BGG operator in the BGG complex corresponding to the 
	$G$-module given by the $k$-th symmetric traceless power
of its fundamental vector representation.  
There are also explicit formulae for a majority of
operators appearing in the BGG complexes in the compact picture 
(as well as for their curved versions in the BGG sequences), 
  but the expressions are more 
complicated and contain many lower order curvature terms (see, \cite{cds}).

The second part of Theorem \ref{thm:4.3} is an example of a more general principle,
which can be formulated as follows. Every $G$-equivariant differential operator $\mathscr{D}$, acting
between sections of homogeneous bundles over $G/P$,
induces by restriction to $G'/P'$
a $G'$-equivariant differential operator. 
Moreover, we may compose $G$-equivariant differential operators or
$G'$-equivariant differential operators to get $G'$-equivariant differential operators.
This composition may be possible 
for a discrete set of $\lambda.$ 
Such possibilities called factorization identities will be discussed 
(for densities) in Section \ref{4.3.5} in more details.
\end{remark}
\subsubsection{The branching rules for Verma modules --- generic case}

The branching rules for generic parameters are obtained
as a special case
 of the general theory stated in 
 Section \ref{sec:3}.  
The proof does not require the F-method.  

\begin{theorem}  \label{generic}
For $\lambda \in\mC\setminus \{\frac 1 2(k-n):k=2,3,4,\cdots\}$, 
the Verma module 
$M^\gog_\gop(\lambda)$ 
decomposes 
 as a direct sum of generalized Verma modules of ${\mathfrak {g}}'$:
\begin{eqnarray}
\label{eqn:4.9}
M^{\gog}_{\gop}(\lambda)|_{\gog'}
\simeq 
\bigoplus_{b\in\mN}M^{\gog'}_{\gop'}(\lambda -b).
\end{eqnarray}
\end{theorem}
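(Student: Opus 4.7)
The plan is to deduce \eqref{eqn:4.9} as a direct specialization of Theorem \ref{thm:3.5}, after recognizing $(\gog,\gog')$ as a symmetric pair and matching the excluded values of $\lambda$ with the failure locus of the genericity conditions \eqref{eqn:36a}--\eqref{eqn:36b}. The involution $\tau$ of $\gog$ obtained by conjugation with $\mathrm{diag}(1,\ldots,1,-1,1)$ (negating only the $e_n$-direction) has fixed subalgebra $\gog'$, stabilizes $\gol$, and preserves each of $\gon_\pm$; since the nilradical $\gon_+$ of the conformal parabolic is abelian (Section \ref{3.1}), the hypotheses of Theorem \ref{thm:3.5} are in force, $\gop$ is $\gog'$-compatible in the sense of Definition \ref{def:3.2}, and $\gop'=\gop\cap\gog'$ is a parabolic of $\gog'$.

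Next I would identify the combinatorial data entering \eqref{eqn:mfbr}. Because $\dim\gon_-^{-\tau}=\dim\gon_--\dim\gon_-'=1$, the maximal set of strongly orthogonal roots in $\Delta(\gon_-^{-\tau},\mathfrak{j}^\tau)$ reduces to a single root $\nu_1$, so the multi-index $\{a_j^{(i)}\}$ in \eqref{eqn:mfbr} collapses to a single $a\in\mN$. In coordinates where $\mC_\lambda$ has $\mathfrak{j}^\tau$-weight $\lambda e_1$ and the hyperbolic grading element acts on $\gon_-$ by $-1$, one reads off $\nu_1=-e_1$, so $\lambda|_{\mathfrak{j}^\tau}+a\nu_1$ corresponds to the scalar parameter $\lambda-a$. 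Substituting into \eqref{eqn:mfbr} yields \eqref{eqn:4.9} as an identity in the Grothendieck group $K(\lO^\gop)$.

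Finally I would upgrade this identity to an honest direct sum by verifying \eqref{eqn:36a}--\eqref{eqn:36b} for $\lambda$ in the stated range. The $\mathfrak{Z}(\gog')$-infinitesimal character of the summand $M^{\gog'}_{\gop'}(\lambda-b)$ is represented in standard coordinates by $(\lambda-b+\tfrac{n-1}{2},\,\tfrac{n-3}{2},\,\tfrac{n-5}{2},\,\ldots)$, and the Weyl group $W(\mathfrak{so}(n+1))$ acts by permutations together with (even) sign changes. A direct inspection shows that for $b_1\ne b_2$ the two characters agree only via a sign flip in the first coordinate, forcing $\lambda=\tfrac{b_1+b_2-(n-1)}{2}$, i.e.\ $\lambda=\tfrac{k-n}{2}$ for some $k\ge 2$; potential coincidences obtained by permuting the first coordinate into the fixed tail $\tfrac{n-3}{2},\tfrac{n-5}{2},\ldots$ force $|\lambda-b_1+\tfrac{n-1}{2}|=|\lambda-b_2+\tfrac{n-1}{2}|$ and so reduce to the same condition. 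Thus the summands lie in distinct blocks of $\lO^{\gop'}$ and admit no mutual extensions, giving the direct sum; condition \eqref{eqn:36a} is verified analogously using the positive roots of $\gog'$ in $\gop'$. The main technical point will be handling the two parity regimes $W(\gog')=W(B_r),W(D_r)$ uniformly in this orbit analysis and confirming that the stated excluded set is exactly the failure locus rather than merely containing it.
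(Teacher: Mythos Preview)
Your proposal is correct and follows essentially the same approach as the paper: both apply Theorem~\ref{thm:3.5} to this symmetric pair, identify the single strongly orthogonal root $\nu_1=-e_1$ (since $\dim\gon_-^{-\tau}=1$), obtain \eqref{eqn:4.9} in the Grothendieck group, and then check that the $\mathfrak{Z}(\gog')$-infinitesimal characters $(\lambda-b+\tfrac{n-1}{2},\tfrac{n-3}{2},\ldots)$ are pairwise distinct in $(\mathfrak{j}')^*/W(\gog')$ exactly when $2\lambda+n\notin\{2,3,4,\ldots\}$. The paper's proof is more terse---it does not spell out $\tau$ or the Weyl-group case analysis, and it invokes only the distinctness condition \eqref{eqn:36b} (the absence of extensions between blocks with different infinitesimal characters already upgrades the Grothendieck identity to a direct sum, so your separate verification of \eqref{eqn:36a} is not needed for the statement as written).
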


\vskip 1mm
\begin{proof}
Apply Theorem \ref{thm:3.5} to the special case
 where $\gog^\tau=\gog'$ and  
\[
  ({\mathfrak {g}}, {\mathfrak {g}}', 
   {\mathfrak {p}}/{\mathfrak {n}}_+)
  \simeq
  ({\mathfrak {so}}(n+2,{\mathbb{C}}), 
   {\mathfrak {so}}(n+1,{\mathbb{C}}),
   {\mathfrak {so}}(n,{\mathbb{C}}) \oplus {\mathbb{C}}).  
\]
Then $l=1$ and $\nu_1 = -e_1$.  
Hence we get \eqref{eqn:4.9} from Theorem \ref{thm:3.5}
 as the identity 
 in the Grothendieck group
 for all $\lambda \in {\mathbb{C}}$.  
The infinitesimal characters
 of $M_{{\mathfrak {p}}'}^{{\mathfrak {g}}'}(\lambda-b)$
 are given by 
\[
  (\lambda-b+\frac{n-1}{2},
   \frac{n-3}{2},
   \frac{n-5}{2},
   \cdots,
   \frac{n-1}{2}-[\frac{n-1}{2}]),
\]
 which are all distinct 
 in $({\mathfrak {j}}')^{\ast}/W({\mathfrak {g}}')$ for $b \in {\mathbb{N}}$
 if and only if
 $2\lambda + n \ne 2,3,4, \cdots$, 
 wh\textsc{}ence the last statement.  
\end{proof}
 
We shall give in Corollary \ref{cor:girred}
 a necessary and sufficient condition
 on $\lambda$ 
 for the irreducibility 
 of $M_{{\mathfrak {p}}'}^{{\mathfrak {g}}'}(\lambda)$.  
The branching law in the singular case $\lambda \in \frac 1 2 (k-n)$
 $(k=2,3,4, \cdots)$
 will be treated in Theorem \ref{T.3.8}.  
 
\subsubsection{The branching rules --- exceptional cases}
For integral values of the inducing parameter, 
 the branching law is not always a direct sum decomposition
 but may involve extensions.  
To understand this delicate structure,
 we shall apply the F-method again
 and use an explicit form of singular vectors. 
The description of the branching rules
 for exceptional parameters
 was earlier studied
in a special case corresponding to Juhl's operator in \cite{pso}. 
 
 Let us first notice
 that the $\gog$-module
 $M^\gog_\gop(\lambda)\equiv M^\gog_\gop(\mC_\lambda)$ decomposes for all
 $\la$ into an even and odd part as ${\mathfrak {g}}'$-modules. 
In the F-method, we take the Fourier transform of the 
Verma module $M^\gog_\gop(\mC_\lambda)$, and this decomposition is 
described as the decomposition of
 $\Pol[\xi_1,\ldots,\xi_n]\otimes \mC_\lambda$
into
$$
\left(\bigoplus_{k=0}^\infty \Pol[\xi_1,\ldots,\xi_{n-1}]\xi_n^{2k}\right)\otimes \mC_\lambda
 \oplus\left(\bigoplus_{k=0}^\infty \Pol[\xi_1,\ldots,\xi_{n-1}]\xi_n^{2k+1}\right)\otimes \mC_\lambda.  
$$
By the formula of $d\pi_\lambda(E_j)$ (see Lemma \ref{lem:4.1}), it 
is easy to see that both summands are $\gog'$-submodules.

Any singular vector vector $\varphi(w_{\lambda,K})$ ($K\in\mN$)
in $M^\gog_\gop(\lambda)^{\gon'_+}$ (see Theorem \ref{basis})
generates a $\gog'$-submodule of $M^\gog_\gop(\lambda)$, which 
we denote by $V_K\equiv V_K(\lambda)$. Since $\gon_-$ acts
freely on $M^\gog_\gop(\lambda)$, the $\gog'$-module $V_K$
is isomorphic to the $\gog'$-Verma module 
$M^{\gog'}_{\gop'}({\mathbb C}_{\lambda-K})$. We note that
the $\gog'$-submodule $\sum\limits_{K\in\mN}V_K$ in
$M^\gog_\gop(\mC_\lambda)$ is not necessarily a direct sum
for exceptional parameter $\lambda$ (see (\ref{eqn:lmdj}) below),
where two $\gog'$-modules $V_K$ and $V_{K'}$ may have the
same ${\mathfrak Z}(\gog')$-infinitesimal character. In order
to understand what happens about the $\gog'$-module structure
of the $\gog$-module $M^\gog_\gop(\mC_\lambda)$ in this 
case, we apply again the F-method -- take the inverse Fourier 
transform (see (\ref{eqn:phiPM}))
\[
V_K=\varphi(\operatorname{Pol}[\xi_1, \cdots, \xi_{n}]w_{K, \lambda}) 
\]
and use an explicit formula for the singular vector $w_{K,\lambda}$
 (see \eqref{eqn:wN}).   

In cases where the submodules  $V_{2N}$ and $V_{2N'}$ have the same infinitesimal
character, we shall see (due to the knowledge of the explicit form of the singular 
vectors) that one of them
is submodule of the other. 
In this case, we find a $\gog'$-submodule $M^\gog_\gop(\mC_\lambda)$ which allows 
a non-splitting extension. 
We shall illustrate it in a number of examples.
For this, we begin with explicit formulas of $w_{K, \lambda}$ $(K\leq 4)$
as follows (see Appendix for the formula for $\widetilde{C}^\lambda_K(t)$
($K=0,1,\cdots , 4$)):
\begin{eqnarray}
& & w_0\equiv w_{0, \lambda} = 1\otimes 1_\lambda,
\nonumber \\
& & w_1\equiv w_{1, \lambda} = \xi_n\otimes 1_\lambda,
\nonumber \\
& & w_2\equiv w_{2, \lambda} = (-(2\lambda+n-3)\xi_n^2-|\xi'|^2)\otimes 1_\lambda,
\nonumber \\
& & w_3\equiv w_{3, \lambda} = \xi_n(-\frac{1}{3}(2\lambda+n-5)\xi_n^2-|\xi'|^2)\otimes 1_\lambda,
\nonumber \\
& & w_4\equiv w_{4, \lambda} = (\frac{1}{3}(2\lambda+n-5)(2\lambda+n-7)\xi_n^4+2
(2\lambda+n-5)\xi_n^2|\xi'|^2+|\xi'|^4)\otimes 1_\lambda, \nonumber
\end{eqnarray}
where we recall $|\xi'|^2=\sum\limits_{i=1}^{n-1}\epsilon_i\xi_i^2$.

We set 
\begin{equation}
\label{eqn:lmdj}
\lambda_j :=\frac 1 2 (-n+1+j).  
\end{equation}

\begin{example}\label{Ex1}
{\it The case $\la=\la_1=\frac{-n+2}{2}.$}
In this case, all the infinitesimal characters of the $\gog'$-submodules
 generated by singular vectors $w_K$ $(K\in\mN)$ are mutually different 
except those corresponding to $w_0$ and $w_1,$ which coincide. 
 Due to the fact that the whole $\gog$-module splits into
a direct sum of even and odd parts, there is no extension among these 
$\gog'$-modules and therefore,
the branching is
 the same as in the generic case.
 \end{example}

\begin{example}\label{Ex2} {\it The case $\la=\la_2=\frac{-n+3}{2}.$}
In this case, the infinitesimal characters of $\gog'$-submodules
 generated by singular vectors $w_K$ $(K\in\mN)$ coincide only for
$w_0$ and $w_2$, and 
all others are mutually different. We compare $w_0$ and $w_2$. For 
$\lambda=\lambda_2$, the first term of $w_2\equiv w_{2,\lambda}$
vanishes, and $w_2$ reduces to $-|\xi'|^2\otimes 1_\lambda$. Hence 
\[
\operatorname{Pol}[\xi_1, \cdots, \xi_{n-1}]w_{0, \lambda} 
\supset
\operatorname{Pol}[\xi_1, \cdots, \xi_{n-1}]w_{2, \lambda} 
\]
for $\lambda=\lambda_2$. In turn, we have 
$V_0(\lambda_2)\supset V_2(\lambda_2)$.

Thus
for $\la=\la_2=\frac{-n+3}{2},$  the $\gog'$-submodule $V_0(\lambda_2)$ generated by $w_0$
contains the unique nontrivial 
submodule $V_2(\lambda_2)$ , generated by $w_2$. On the other hand, the direct sum $M_{02}$ of 
the $U(\gon'_-)$-span of $w_0=1_\la$ and 
the $U(\gon'_-)$-submodule generated by the vector  $\xi_n^2\otimes 1_\la$
 is invariant under the action of $\gog',$ and it is a
(non-split) extension
\begin{eqnarray}   
0\to {M}^{\gog^\prime}_{\gop^\prime}({\la_2})\to M_{02}
\to {M}^{\gog^\prime}_{\gop^\prime}({\la_2}-2)\to 0.
\end{eqnarray}
All the other infinitesimal characters are mutually different,
 hence the branching rule is now given by
$$
M^\gog_\gop(\la_2)\simeq
M_{02}\oplus\bigoplus_{b\in\mN,b\not=0,2}M^{\gog'}_{\gop'}(\la_2 -b).
$$ 
 \end{example}

\begin{example}\label{Ex3} {\it The case $\la=\la_3=\frac{-n+4}{2}.$}
In this case, the infinitesimal characters of $\gog'$-submodules
 generated by singular vectors $w_0, w_3$ respectively
$w_1,w_2$ coincide, and both characters are different from each other,  and differ
from all others (which are also mutually different). But again due to the fact that the whole $\gog$-module splits into
a direct sum of even and odd parts, the whole branching is again the same as in generic case.
 \end{example}
 
\begin{example}\label{Ex4} 
Let $\la=\la_4=\frac{-n+5}{2}.$ The explicit formula of singular vectors $w_{K,\lambda}$
shows that in this case the vectors shows the first two terms of $w_4=w_{4, \lambda}$
vanish, and  $w_4$ reduces to $|\xi'|^4\otimes 1_\lambda$. Hence
\[
\operatorname{Pol}[\xi_1, \cdots, \xi_{n-1}]w_{0, \lambda} 
\supset
\operatorname{Pol}[\xi_1, \cdots, \xi_{n-1}]w_{4, \lambda} 
\]
for $\lambda=\lambda_4$. In turn, we have 
$V_0(\lambda_4)\supset V_4(\lambda_4)$.
Of course, these two $\gog'$-modules have the same infinitesimal character. 
Another couple with the same infinitesimal characters (but different from
the previous couple)
are the two $\gog'$-submodules $V_1(\lambda_4)$ and $V_3(\lambda_4)$ 
generated by $w_1$ 	and $w_3$, respectively.

Returning to 
$w_0,w_4\in\operatorname{Pol}[\xi_1, \cdots, \xi_{n-1}]\otimes 1_\lambda$ 
for $\lambda=\lambda_4$, we consider 
\[
M_{04}:=\varphi(\operatorname{Pol}[\xi_1, \cdots, \xi_{n-1}]\, {\mathbb C}\text{-}\mathrm{{span}}
\{1,\xi_n^2,\xi_n^4\}\otimes 1_\lambda). 
\]
It turns out that the $U(\gon'_-)$-submodule $M_{04}$ in
$M^\gog_\gop(\mC_\lambda)$ is $\gog'$-invariant. Clearly,
$V_4(\lambda_4)\subset V_0(\lambda_4)\subset M_{04}$.
Furthermore, it is possible to show we have a non-splitting
exact sequence of $\gog'$-modules:
\begin{eqnarray}   
0\to {M}^{\gog^\prime}_{\gop^\prime}({\la_4})\to M_{04}
\to {M}^{\gog^\prime}_{\gop^\prime}({\la_4}-4)\to 0.
\end{eqnarray} 

Similarly, there is a non-trivial extension
\begin{eqnarray}   
0\to {M}^{\gog^\prime}_{\gop^\prime}({\la_4}-1)\to M_{13}
\to {M}^{\gog^\prime}_{\gop^\prime}({\la_4}-3)\to 0
\end{eqnarray} 
 of the modules generated by $w_1$ and $w_3,$ denoted by $M_{13}$, and
the branching rule is

  $$
M^\gog_\gop(\la_4)\simeq
M_{04}\oplus M_{13}\oplus\bigoplus_{b\in\mN,b\not=0,1,3,4}M^{\gog'}_{\gop'}(\la_4 -b).
$$  
\end{example}

We generalize these observations and obtain
 the following precise description
of the extensions
among branching laws
 for integral parameters: 
\begin{theorem} \label{T.3.8}
Recall \eqref{eqn:lmdj}
for the definition of $\lambda_j$.  
\begin{enumerate}
\item[{\rm{(1)}}] 
Suppose $j=2k+1$ with $k\in\mathbb{N}_+$.  
Then the branching is the same as in the generic case:
 \begin{eqnarray}
M^{\gog}_{\gop}(\lambda_{2k+1})|_{{\mathfrak {g}}'}
\simeq 
\bigoplus_{b\in\mN}M^{\gog'}_{\gop'}(\lambda_{2k+1} -b)
\quad
\text{{\rm{(direct sum)}}}.
\end{eqnarray}

\item[{\rm{(2)}}] 
Suppose $j=2k$ with $k\in\mathbb{N}_+$.  
Then  
there exists a ${\mathfrak {g}}'$-submodule
 $M_{a,2k-a}\subset M^{\gog}_{\gop}(\lambda_{2k})$
 for each $a=0,\ldots,k-1$
with the following two properties:
The restriction $M_{\mathfrak{p}}^{\mathfrak{g}}(\lambda_{2k})$
decomposes into a direct sum of\/ $\mathfrak{g}'$-modules:
$$
M^\gog_\gop(\la_{2k})|_{\mathfrak{g}'}\simeq
\bigoplus_{a=0}^{k-1}M_{a,2k-a}\oplus M^{\gog'}_{\gop'}(\la_{2k} -k)\oplus\bigoplus_{b=2k+1}^\infty
M^{\gog'}_{\gop'}(\la_{2k} -b),
$$  
and there exists a non-split exact sequence of\/ $\mathfrak{g}'$-modules:
\begin{eqnarray}   
0\to {M}^{\gog^\prime}_{\gop^\prime}({\la_{2k}}-a)\to M_{a,2k-a}
\to {M}^{\gog^\prime}_{\gop^\prime}({\la_{2k}}-(2k-a))\to 0.
\end{eqnarray} 
\end{enumerate}
\end{theorem}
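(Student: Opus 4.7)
My plan is to combine Theorem~\ref{basis}(2), which classifies the singular vectors in $M^{\gog}_{\gop}(\lambda_j)$ as $\{w_{K,\lambda_j}\}_{K\in\mathbb{N}}$, with the Grothendieck group identity $[M^{\gog}_{\gop}(\lambda_j)|_{\gog'}]=\sum_K[V_K]$ from Theorem~\ref{groth}, where $V_K:=\varphi(\Pol[\xi_1,\ldots,\xi_{n-1}]\,w_{K,\lambda_j})\cong M^{\gog'}_{\gop'}(\lambda_j-K)$. The remaining task is to decide whether this identity lifts to a genuine direct sum decomposition or exhibits nontrivial $\gog'$-extensions.

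First I would compute the $\mathfrak{Z}(\gog')$-infinitesimal character of $V_K$, which (as in the proof of Theorem~\ref{basis}) is represented modulo $W(\gog')$ by $\bigl(\tfrac{j-2K}{2},\tfrac{n-3}{2},\ldots\bigr)$; since $W(\gog')$ acts by signed permutations, two such coincide (away from degeneracies with the trailing entries) iff $K=K'$ or $K+K'=j$. Next, because $G'\subset G$ fixes $e_n$, the parity decomposition of $\Pol[\xi_1,\ldots,\xi_n]\otimes\mathbb{C}_\lambda$ by $\xi_n$-degree modulo $2$ yields $\gog'$-stable subspaces $M^{\gog}_{\gop}(\lambda_j)=M^+\oplus M^-$, and each $V_K$ lies in the sector matching the parity of $K$ (formulas \eqref{eqn:weven}--\eqref{eqn:wodd} show that $w_{K,\lambda}$ has pure $\xi_n$-parity $K$). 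For $j=2k+1$ odd (Part~(1)), each pairing $V_K\sim V_{j-K}$ crosses parities; within each parity sector all infinitesimal characters are then distinct, and since modules with distinct generalized $\mathfrak{Z}(\gog')$-characters admit no nontrivial extensions, the restriction decomposes as in the generic case.

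For Part~(2), $j=2k$ even, the pairings $V_a\sim V_{2k-a}$ now live in a single parity sector. The self-pair $V_k$ is a direct summand (unique in its sector). The key technical claim is the identity
\[
w_{2k-a,\lambda_{2k}}\;=\;c_{a,k}\,(-|\xi'|^2)^{k-a}\,w_{a,\lambda_{2k}},\qquad 0\le a\le k,
\]
for some nonzero constants $c_{a,k}\in\mathbb{C}^{\times}$, which I would verify from \eqref{eqn:weven}--\eqref{eqn:wodd}: at $\lambda=\lambda_{2k}$ specific linear factors in the products \eqref{acoef}--\eqref{bcoef} vanish, forcing the coefficients $a_j(\lambda_{2k})$ (resp.\ $b_j(\lambda_{2k})$) to be zero for all indices $j$ below an explicit threshold, so that $w_{2k-a,\lambda_{2k}}$ factors out a maximal power $(-|\xi'|^2)^{k-a}$ and the surviving polynomial matches $w_{a,\lambda_{2k}}$ up to a nonzero constant. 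The small cases $a=0,1$ exhibited in Examples~\ref{Ex2} and~\ref{Ex4} illustrate the mechanism. The immediate consequence is $V_{2k-a}=\Pol[\xi']\,w_{2k-a,\lambda_{2k}}\subsetneq V_a$ for every $a<k$.

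With this containment in hand, I would define $M_{a,2k-a}$ to be the generalized $\chi_a$-eigenspace of $\mathfrak{Z}(\gog')$ in $M^{\gog}_{\gop}(\lambda_{2k})$; since $M^{\gog}_{\gop}(\lambda_{2k})$ is discretely decomposable and belongs to $\mathcal{O}^{\gop'}$, it is the direct sum of such eigenspaces, giving the direct sum decomposition stated. The exact sequence follows because $V_a\hookrightarrow M_{a,2k-a}$ with $V_a\cong M^{\gog'}_{\gop'}(\lambda_{2k}-a)$, and the Grothendieck class of $M_{a,2k-a}/V_a$ equals $[V_{2k-a}]=[M^{\gog'}_{\gop'}(\lambda_{2k}-(2k-a))]$; the highest weight vector in the quotient, together with the universal property of Verma modules, identifies $M_{a,2k-a}/V_a$ with $M^{\gog'}_{\gop'}(\lambda_{2k}-(2k-a))$. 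Non-splitness is forced by Theorem~\ref{basis}(2): a splitting would produce a $\gog'$-submodule of $M^{\gog}_{\gop}(\lambda_{2k})$ isomorphic to $M^{\gog'}_{\gop'}(\lambda_{2k}-(2k-a))$ disjoint from $V_a$, and hence a singular vector of weight $\lambda_{2k}-(2k-a)$ outside $V_a$; but up to scalar the unique such singular vector is $w_{2k-a,\lambda_{2k}}$, which lies in $V_a$ by the key identity---a contradiction. The main obstacle is establishing this Gegenbauer-type identity for $w_{2k-a,\lambda_{2k}}$: it expresses a symmetry of $C^{-k}_K$ at the negative-integer parameter and requires careful bookkeeping of the vanishing factors in \eqref{acoef}--\eqref{bcoef}, matching the surviving coefficients of $w_{2k-a,\lambda_{2k}}$ against those of $(-|\xi'|^2)^{k-a}\,w_{a,\lambda_{2k}}$.
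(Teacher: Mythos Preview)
Your proposal is correct and follows essentially the same route as the paper: infinitesimal-character bookkeeping plus the parity splitting handle Part~(1), and for Part~(2) the key step is exactly the identity $w_{2k-a,\lambda_{2k}}\propto(-|\xi'|^2)^{k-a}\,w_{a,\lambda_{2k}}$, which the paper isolates and proves cleanly as Lemmas~\ref{lem:dualC}--\ref{lem:4.14} via the Gegenbauer symmetry $C_a^{-k}=C_{2k-a}^{-k}$ rather than by tracking the vanishing factors in \eqref{acoef}--\eqref{bcoef}. Your non-splitness argument (a splitting would produce a second singular vector at weight $\lambda_{2k}-(2k-a)$, contradicting Theorem~\ref{basis}) is equivalent to the paper's abstract Lemma~\ref{lem:4.9}, and your identification of the quotient $M_{a,2k-a}/V_a$ is made rigorous in the paper by first invoking the irreducibility of $M^{\gog'}_{\gop'}(\lambda_{2k}-(2k-a))$ from Corollary~\ref{cor:girred}.
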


In order to give a proof of the theorem, 
 we begin with the following elementary but useful observation.

\begin{lemma}
\label{lem:4.9}
Suppose $N$ is a ${\mathfrak {g}}$-module in the category ${\mathcal{O}}$, 
 and $V_1$, $V_2$ are two submodules of $N$ 
 satisfying the following three conditions:
\begin{enumerate}
\item[1)] 
(character identity)
$\operatorname{Ch}(V_1) + \operatorname{Ch}(V_2)
=\operatorname{Ch}(N)$, 
\item[2)]$V_1$ is irreducible, 
\item[3)]$\dim \operatorname{Hom}_{\mathfrak {g}}(V_1, V_2)
= 
\dim\operatorname{Hom}_{\mathfrak {g}}(V_1, N)=1.    
$
\end{enumerate}
Then there exists a non-split exact sequence
 of ${\mathfrak {g}}$-modules:
\begin{equation}
\label{eqn:short}
0\to V_2\to N\to V_1\to 0.  
\end{equation}
\end{lemma}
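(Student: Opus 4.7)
The plan is to construct the short exact sequence \eqref{eqn:short} directly: first I would show that $V_1$ is contained in $V_2$ as submodules of $N$, then identify the quotient $N/V_2$ with $V_1$ by comparing characters, and finally exploit condition (3) to rule out splitting.

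For the containment, since $V_1$ is irreducible, the intersection $V_1 \cap V_2$ is either zero or all of $V_1$. I would rule out the first alternative by a character count: if $V_1 \cap V_2 = 0$, the sum $V_1 + V_2 \subset N$ is direct, so condition (1) gives
\[
\operatorname{Ch}(V_1 \oplus V_2) = \operatorname{Ch}(V_1) + \operatorname{Ch}(V_2) = \operatorname{Ch}(N),
\]
forcing $N = V_1 \oplus V_2$. But then the inclusion $\iota_1\colon V_1 \hookrightarrow N$ and the composition $V_1 \xrightarrow{\phi} V_2 \hookrightarrow N$ coming from any nonzero $\phi \in \operatorname{Hom}_{\mathfrak{g}}(V_1, V_2)$ are linearly independent (their images lie in $V_1$ and $V_2$ respectively), yielding $\dim \operatorname{Hom}_{\mathfrak{g}}(V_1, N) \geq 2$ and contradicting (3). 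Hence $V_1 \subset V_2$.

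Once $V_1 \subset V_2$, condition (1) gives $\operatorname{Ch}(N/V_2) = \operatorname{Ch}(V_1)$. Since the characters of simple modules in $\mathcal{O}$ are linearly independent (see \cite{H}), the composition factors of $N/V_2$ coincide with those of the simple module $V_1$, so $N/V_2$ has length one and is isomorphic to $V_1$. This produces the short exact sequence \eqref{eqn:short}.

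For non-splitness, I would argue by contradiction: a splitting would provide a submodule $\widetilde{V}_1 \subset N$ with $\widetilde{V}_1 \simeq V_1$ and $N = V_2 \oplus \widetilde{V}_1$. Then the embedding $V_1 \hookrightarrow V_2 \hookrightarrow N$ (whose image lies in $V_2$) and the embedding $V_1 \simeq \widetilde{V}_1 \hookrightarrow N$ (whose image meets $V_2$ only at zero) give two linearly independent elements of $\operatorname{Hom}_{\mathfrak{g}}(V_1, N)$, again contradicting (3). The argument is largely formal; the only delicate point is the first step, where condition (3) must be used to upgrade the abstract Hom-condition into an actual submodule containment inside $N$, and this same condition then reappears to obstruct splitting at the end.
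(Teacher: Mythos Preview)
Your proof is correct and follows essentially the same approach as the paper: establish $V_1 \subset V_2$ using condition (3), identify $N/V_2 \simeq V_1$ from the character identity, and rule out splitting again via condition (3). The paper's argument for the containment is slightly more direct---the inclusion $V_1 \hookrightarrow N$ and the composite $V_1 \xrightarrow{\phi} V_2 \hookrightarrow N$ must be proportional since $\dim\operatorname{Hom}_{\mathfrak{g}}(V_1,N)=1$, forcing equal images---but your case analysis via the character identity works equally well.
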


\begin{proof}
We note
 that $V_1 \subset V_2 \subset N$ from 
 the third condition.  
Since $V_1$ is irreducible,
$V_1$ is isomorphic to the quotient $N/V_2$ by the first condition.  
Thus we have an exact sequence
 \eqref{eqn:short} of ${\mathfrak {g}}$-modules.  
Then the third condition implies 
 that \eqref{eqn:short} does not split.  
\end{proof}

\medbreak
The next lemma analyzes a relationship between two singular vectors
with the same infinitesimal characters.
\begin{lemma}
\label{lem:dualC}
For $a, k \in {\mathbb{N}}$ 
 such that $a \le 2k$, 
 the Gegenbauer polynomials satisfy:
\[
  C_{a}^{-k}(s)=C_{2k-a}^{-k}(s)
\,\,
\text{and}
\,\,
{\mathcal C}_{a}^{-k}(s)=s^{a-k} {\mathcal C}_{2k-a}^{-k}(s).  
\]
\end{lemma}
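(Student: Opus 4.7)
The plan is to exploit the generating function for Gegenbauer polynomials, which for $\alpha=-k$ with $k\in\mathbb{N}$ reduces to a polynomial identity. Recall that
\[
(1-2st+t^2)^{-\alpha} = \sum_{n\geq 0} C_n^{\alpha}(s)\, t^n,
\]
so for $\alpha=-k$ the left-hand side $(1-2st+t^2)^k$ is a polynomial of degree $2k$ in $t$, and the sum on the right truncates at $n=2k$. In particular $C_a^{-k}(s)=0$ for $a>2k$.

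For the first identity, I would observe the palindromic symmetry of $(1-2st+t^2)^k$ in the variable $t$: substituting $t\mapsto 1/t$ and multiplying by $t^{2k}$ gives
\[
t^{2k}\bigl(1-2s t^{-1}+t^{-2}\bigr)^k = (t^2-2st+1)^k = (1-2st+t^2)^k.
\]
Expanding both sides as polynomials in $t$ and comparing the coefficient of $t^a$ yields $C_{2k-a}^{-k}(s)=C_a^{-k}(s)$ for $0\le a\le 2k$.

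For the second identity, I would just plug the first one into the definition $\mathcal{C}_\ell^{\alpha}(x^2)=x^{\ell}C_\ell^{\alpha}(x^{-1})$ from \eqref{eqn:Ctilde}. Namely,
\[
\mathcal{C}_{2k-a}^{-k}(x^2) = x^{2k-a}C_{2k-a}^{-k}(x^{-1}) = x^{2k-a}C_{a}^{-k}(x^{-1}) = x^{2k-2a}\cdot x^{a}C_{a}^{-k}(x^{-1}) = x^{2k-2a}\mathcal{C}_{a}^{-k}(x^2).
\]
Setting $s=x^2$ and solving for $\mathcal{C}_a^{-k}(s)$ gives $\mathcal{C}_a^{-k}(s)=s^{a-k}\mathcal{C}_{2k-a}^{-k}(s)$, as required.

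There is no real obstacle here: the entire argument is the observation that the generating series becomes a palindromic polynomial when $\alpha$ is a negative integer. The only mild point to watch is that the factor $s^{a-k}$ can carry a negative exponent, but it always combines with $\mathcal{C}_{2k-a}^{-k}(s)$ (which is divisible by $s^{k-a}$ when $a<k$ by the first displayed identity applied in the form $\mathcal{C}_{2k-a}^{-k}(s)=s^{k-a}\mathcal{C}_{a}^{-k}(s)$) to produce a genuine polynomial, so the identity makes sense as an equality in $\mathbb{C}[s]$.
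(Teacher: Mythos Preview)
Your proof is correct and takes a genuinely different route from the paper's argument. The paper works directly with the explicit coefficient formula \eqref{eqn:Cpoly}: after rewriting $C_a^{-k}(s)$ using $\Gamma(\alpha)\Gamma(1-\alpha)=\pi/\sin\pi\alpha$ to handle the pole at $\alpha=-k$, it writes out $C_{2k-a}^{-k}(s)$ as a finite sum, observes that the terms with index $i<k-a$ vanish (because $1/\Gamma$ of a nonpositive integer is zero), and then re-indexes by $j=i+a-k$ to match the sum for $C_a^{-k}(s)$ term by term. Your argument bypasses all of this by noting that the generating function $(1-2st+t^2)^k$ is a palindromic polynomial in $t$ of degree $2k$, so its coefficients satisfy $C_a^{-k}(s)=C_{2k-a}^{-k}(s)$ automatically. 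This is cleaner and more conceptual; the paper's approach, by contrast, makes the polynomial degrees and the vanishing of individual terms explicit, which can be useful when one needs finer control over the coefficients (as in the surrounding analysis of singular vectors). Your derivation of the second identity from the first via \eqref{eqn:Ctilde} is the same as the paper's.
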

\begin{proof}
By using the identity $\Gamma(\alpha)\Gamma(1-\alpha)=\frac{\pi}{\sin \pi \alpha}$, 
 the polynomial expression \eqref{eqn:Cpoly}
 of the Gegenbauer polynomial can be
 written as 
\[
  C_{a}^{-k}(s)
  =
  (-1)^a k ! \sum_{i=0}^{[\frac{a}{2}]}\frac{(2s)^{a-2i}}{\Gamma(1-a+i+k)i! (a-2i)!}.  
\]
Switching $a$ with $2k-a$, 
 we have 
\[
C_{2k-a}^{-k}(s)
  =
  (-1)^{2k-a} k ! \sum_{i=0}^{[\frac{2k-a}{2}]}
  \frac{(2z)^{2k-a-i}}{\Gamma(1-k+a+i)i! (2k-a-2i)!}, 
\]
where the terms for $i=0,\cdots,k-a-1$
 vanish if $a<k$.  
Putting $j=i+a-k$, 
 we have
\[
C_{2k-a}^{-k}(s)
  =
  (-1)^a k ! \sum_{j=0}^{[\frac{a}{2}]}
  \frac{(2z)^{a-2j}}{\Gamma(1+j)(j+k-a)! (a-2j)!}.  
\]
Hence $C_{a}^{-k}(s)=C_{2k-a}^{-k}(s)$.  
The last assertion follows immediately from the definition \eqref{eqn:Ctilde}.  
\end{proof}

\begin{lemma}\label{lem:4.14}
Let $k \in {\mathbb{N}}$.  
We denote by $f_a(\xi) \equiv f_{a,\lambda_{2k}}(\xi)$
 ($a \in {\mathbb{N}}$)
 the polynomials defined in \eqref{eqn:FK}.
Then for any $a \in {\mathbb{N}}$
 such that $a \le k$, 
 we have
\[
  f_{2k-a}(\xi)
  =
  (\sum_{i=1}^{n-1}\epsilon_i \xi_i^2)^{k-a}f_{a}(\xi).  
\]
\end{lemma}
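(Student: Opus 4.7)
The plan is to derive this identity as a direct substitution consequence of Lemma \ref{lem:dualC}, applied to the explicit formula \eqref{eqn:FK} at the specific spectral parameter $\lambda = \lambda_{2k}$.

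First I would compute the value of the Gegenbauer index for this special $\lambda$. Since $\lambda_{2k} = \tfrac{-n+1+2k}{2}$, one has
\[
-\lambda_{2k} - \tfrac{n-1}{2} = -k,
\]
so the definition \eqref{eqn:FK} specializes to
\[
f_{a,\lambda_{2k}}(\xi) = \xi_n^{a}\, \mathcal{C}_{a}^{-k}(s),
\qquad
s := -\tfrac{\epsilon_n \sum_{i=1}^{n-1}\epsilon_i\xi_i^2}{\xi_n^2}.
\]
Using $\epsilon_n = -1$ (which holds because $q \ge 2$, as noted in Section \ref{3.1}), one gets $s = \bigl(\sum_{i=1}^{n-1}\epsilon_i\xi_i^2\bigr)/\xi_n^2$.

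Next I would invoke Lemma \ref{lem:dualC}, which asserts that $\mathcal{C}_a^{-k}(s) = s^{a-k}\,\mathcal{C}_{2k-a}^{-k}(s)$ for $a \le 2k$. Under the hypothesis $a \le k$ of the present lemma, we have $k-a \ge 0$, so rearranging gives the polynomial identity
\[
\mathcal{C}_{2k-a}^{-k}(s) = s^{k-a}\,\mathcal{C}_{a}^{-k}(s).
\]
Substituting this into the specialized formula for $f_{2k-a}(\xi)$ yields
\[
f_{2k-a}(\xi) = \xi_n^{2k-a}\, s^{k-a}\, \mathcal{C}_{a}^{-k}(s)
= \xi_n^{2k-a}\cdot \frac{\bigl(\sum_{i=1}^{n-1}\epsilon_i\xi_i^2\bigr)^{k-a}}{\xi_n^{2(k-a)}}\cdot \mathcal{C}_{a}^{-k}(s),
\]
and cleaning up the powers of $\xi_n$ gives exactly
\[
f_{2k-a}(\xi) = \Bigl(\sum_{i=1}^{n-1}\epsilon_i\xi_i^2\Bigr)^{k-a}\,\xi_n^{a}\,\mathcal{C}_{a}^{-k}(s) = \Bigl(\sum_{i=1}^{n-1}\epsilon_i\xi_i^2\Bigr)^{k-a} f_a(\xi),
\]
which is the claim.

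There is essentially no obstacle here: the content of the lemma is entirely pushed into the symmetry $\mathcal{C}_a^{-k}(s)=s^{a-k}\mathcal{C}_{2k-a}^{-k}(s)$, which has already been established. The only care required is the bookkeeping with $\epsilon_n = -1$ in the variable $s$ and the verification that $k-a \ge 0$ so that the identity is a bona fide polynomial identity rather than an identity of rational functions; both are immediate from the hypotheses. The proof is therefore short, and its role in the sequel (namely, to give the containment $V_0(\lambda_{2k})\supset V_{2k}(\lambda_{2k})$ and its even-index analogues used in Theorem \ref{T.3.8}) motivates stating it as a separate lemma.
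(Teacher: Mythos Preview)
Your proof is correct and follows essentially the same approach as the paper: both specialize the Gegenbauer index to $-k$ using $\lambda_{2k}=\tfrac{1}{2}(-n+1+2k)$, invoke Lemma~\ref{lem:dualC} in the form $\mathcal{C}_{2k-a}^{-k}(s)=s^{k-a}\mathcal{C}_a^{-k}(s)$, and then clear the powers of $\xi_n$ using $\epsilon_n=-1$. The only cosmetic difference is that the paper names the variable $t$ and starts the computation from $f_a$ rather than $f_{2k-a}$.
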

\begin{proof}
Let $t=\xi_n^{-2} \sum_{i=1}^{n-1}\epsilon_i \xi_i^2$.  
By \eqref{eqn:FK}
 and Lemma \ref{lem:dualC},
 we have 
\begin{equation*}
f_a(\xi) = \xi_n^a {\mathcal C}_a^{-k}(t)
          = \xi_n^a t^{a-k} {\mathcal C}_{2k-a}^{-k}(t).  
\end{equation*}
Hence $f_{2k-a}(\xi)=\xi_{n}^{2k-a}{\mathcal C}_{2k-a}^{-k}(t)
=(\sum_{i=1}^{n-1}\epsilon_i \xi_i^2)^{k-a}f_a(\xi)$.  
\end{proof}

\begin{proof}
[Proof of Theorem \ref{T.3.8}]
Since there is no extension between two modules with distinct generalized
infinitesimal characters,
we can collect the terms
 of the same generalized infinitesimal characters
 as direct summands.

Let us examine this decomposition in our setting.
First we observe that
the identity \eqref{eqn:4.9} holds
 in the Grothendieck group
 for all $\lambda \in {\mathbb{C}}$ by Theorem \ref{thm:3.5}.  
We recall from \eqref{eqn:lmdj} that
$\lambda_j = \frac{1}{2}(-n+1+j)$.
This shows that
 generalized
$\mathfrak{Z}(\mathfrak{g}')$-infinitesimal characters
 decomposition of $\mathfrak{g}'$-modules:
\[
M_{\mathfrak{p}}^{\mathfrak{g}}(\lambda_j)|_{\mathfrak{g}'}
= \bigoplus_{\substack{b\in\mathbb{N} \\ 2b\ge j}} N_b,
\]
that appear in the direct summands of the restriction
$M_{\mathfrak{p}}^{\mathfrak{g}}(\lambda_j)|_{\mathfrak{g}'}$
are of the form
\[
(\frac{j}{2}-b, \frac{n-3}{2}, \frac{n-5}{2}, \dots, \frac{n-1}{2} - [\frac{n-1}{2}]),
\]
for some $b\in\mathbb{N}$ with $\frac{j}{2}\le b$.
Correspondingly, in the Grothendieck group,
or equivalently,
as the character identity,
we have
\[
N_b = \begin{cases} M_{\mathfrak{p}'}^{\mathfrak{g}'} (\lambda_j - b)
                           & (j < b), \\
                           M_{\mathfrak{p}'}^{\mathfrak{g}'} (\lambda_j - b)
                              + M_{\mathfrak{p}'}^{\mathfrak{g}'} (\lambda_j - j+b)
                           & (\frac{j}{2} < b \le j), \\
                           M_{\mathfrak{p}'}^{\mathfrak{g}'} (\lambda_j - b)
                           & (b = \frac{j}{2}).
         \end{cases}
\]
On the other hand, 
the ${\mathfrak {g}}'$-module
 $M_{\mathfrak{p}'}^{\mathfrak{g}'}(\lambda_j-b)$
 is irreducible for any $b \in \mathbb{N}$
with $\frac{j}{2} \le b$ by Corollary \ref{cor:girred} below.
Let us consider the ${\mathfrak {g}}'$-module structure
 of $N_b$ for $\frac j 2 < b \le j$.  
It follows from Theorem \ref{basis}  that
$w_{b,\lambda_j}$ and $w_{j-b,\lambda_j}$
 $\operatorname{Sol}
({\mathfrak {g}}, {\mathfrak {g}}'; {\mathbb{C}}_{\lambda_j})$
generate two $\mathfrak{g}'$-submodules in
$\operatorname{Pol}(\mathfrak{n}_+)\otimes\mathbb{C}_{\lambda_{j}}$,
to be denoted by $M_b$ and $M_{j-b}$,
which are isomorphic to
$M_{\mathfrak{p}'}^{\mathfrak{g}'}(\lambda_j-b)$
and $M_{\mathfrak{p}'}^{\mathfrak{g}'}(\lambda_j-j+b)$,
respectively.

Furthermore, if $j=2k$ then by Lemma \ref{lem:4.14}
with $b:=2k-a$ on an explicit knowledge of singular vectors
we have
\[
f_b(\xi) = (\sum_{j=1}^{n-1} \varepsilon_i \xi_i^2)^{b-k} f_{2k-b}(\xi)
\]
for $b\ge k$.
Thus $M_{2k-b}$ is a submodule of $M_b$.
The theorem follows by application of Lemma \ref{lem:4.9}.
Here we take $V_1$ to be $M_b$
 and $V_2$ to be $M_{j-b}$. 
\end{proof}

\subsubsection{Factorization identities} \label{4.3.5}
Let us return back to the Example \ref{Ex2}.
For $\la_2=\frac{-n+3}{2},$ the action of $\gog'$ on the top two singular vectors $w_0$ and $w_2$
generate the $\gog'$-submodules $V_0$ and $V_2$
 in the $\gog$-module $M_\gop^\gog(\la_2)$, respectively.
 The second one is a submodule of the first one. The corresponding inclusion
is a $\gog'$-homomorphisms $\psi$, whose dual differential operator is the conformally
invariant Yamabe operator. 
If we denote by $\phi_0$ and $\phi_2$ the inclusions of
$V_0$ and $V_2$, respectively, into $M^{\gog}_{\gop}(\la_0)$,
 we get the relation
$$
\phi_2=\phi_0\circ\psi.
$$

The F-method explains this factorization as 
\[
f_2(\xi)=(\sum\limits_{i=1}^{n-1}\epsilon_i\xi_i^2)f_0(\xi)
\]
by applying Lemma \ref{lem:4.14} with $a=0$ and $k=1$.

As another example, let us consider the weight $\la_4=-\frac{n}{2}+\frac{5}{2}.$
Then the $\gog'$-submodule $V_1$ generated by the singular vector $w_1$ and the 
$\gog'$-submodule $V_3$
generated by the singular vector $w_3$ have the same infinitesimal character.
There exists a $\gog'$-homomorphism $\psi $ from $V_3$ to $V_1.$
The homomorphism $\phi_3$ from $V_3$ to $M_\gop^\gog(\la_0)$ can be factorized
as $\phi_1\circ \phi.$ The F-method explains this factorization as
\[
f_3(\xi)=(\sum\limits_{i=1}^{n-1}\epsilon_i\xi_i^2)f_1(\xi)
\]
 by applying Lemma \ref{lem:4.14} with $a=1$ and $k=2$.

 Hence for some particular discrete subset of values for $\la,$ there is a possibility
to factor an element in $\Hom_{\gog'}(M^{\gog'}_{\gop'}(\la'),M^\gog_\gop(\la))$ as 
a composition of an element
in  the space $\Hom_{\gog'}(M^{\gog'}_{\gop'}(\la'),M^{\gog'}_{\gop'}(\la''))$ and an element in
$\Hom_{\gog'}(M^{\gog'}_{\gop'}(\la''),M^\gog_\gop(\la))$.
There is also another possibility to factor an element in
$\Hom_{\gog'}(M^{\gog'}_{\gop'}(\la'),M^\gog_\gop(\la))$ as a composition of an element
in $\Hom_{\gog'}(M^{\gog'}_{\gop'}(\la'),M^\gog_\gop(\la''))$ and 
in $\Hom_{\gog}(M^{\gog}_{\gop}(\la''),M^{\gog}_{\gop}(\la))$.

 The fact that such a behaviour
 can happen only for discrete values
 of $\la$ is a consequence
of classification of homomorphisms of $\gog$-generalized Verma modules. These properties were discovered 
and used effectively
for curved generalizations by A. Juhl (see \cite[Chapter 6]{ju}) under the name factorization identities.
 It is not a special
feature of this particular example 
with $G=SO_o(1,n+1)$
but it is a more general fact.  
It holds not only in Juhl's case (the scalar case) but also in
spinor-valued case.   
If we consider not only differential intertwining operators for the 
restriction but also  continuous intertwining operators ("symmetry breaking operators"),
then the factorization identity may hold for continuous values of parameters
see \cite{KEast}, \cite[Chapters 8, 12]{KSpeh}. 
In the F-method,
 the factorization identities are derived from the identities of
 two polynomials in $\operatorname{Sol}$ such as the formula given in Lemma \ref{lem:4.14}.
This viewpoint will be pursued in the second part of the series \cite{KM}.
See also \cite[Sect.9]{KP2}.
  
 In the dual language of differential operators the factorization is described as follows:
The first example above expresses the Juhl operator $D_2$ as the composition of the operator $D_0$ and the Laplace operator.
The second example shows that the operator $D_3$ is given by the composition of $D_1$ and the Laplace operator.  

 \subsection
{The case $G=G^\prime=SO_o(p,q)$.}

We consider an application
 of the F-method to the special case $G'=G$.  
In this case, we do not need branching laws.
Even in this classical situation, 
 we shall observe
 that the F-method yields a simple and new independent construction 
of all differential intertwining operators for $G=SO_o(p,q)$-modules induced from densities. 

For $\lambda\in{\mathbb C}$ we recall from Section \ref{3.1} that
${\mathbb C}_\lambda$ the one-dimensional representation of the parabolic 
subgroup $P$. We write $M_{\mathfrak{p}}^{\mathfrak{g}}(\lambda)$ for
$M_{\mathfrak{p}}^{\mathfrak{g}}({\mathbb C}_\lambda)$ as before.
With the notation as in Theorem \ref{basis},
we give a classification of all singular vectors via the bijection
$
M_{\mathfrak{p}}^{\mathfrak{g}}(\lambda)^{\mathfrak{n}_+}
  \overset{\sim} {\underset{\varphi}{\leftarrow}}
  \operatorname{Sol}(\mathfrak{g},\mathfrak{g};\mathbb{C}_\lambda)
$
by the next proposition.

\begin{proposition}\label{prop:girr}
Let $\mathfrak{g}(\mathbb{R})=\mathfrak{so}(p,q)$
and $\lambda\in\mathbb{C}$.
Recall $n=p+q-2$ and $w_{K,\lambda}$ is defined by the formula in \eqref{eqn:wN}.
Then we have:
\begin{align*}
&\text{$n$: even}
\\
&&&
\operatorname{Sol}(\mathfrak{g},\mathfrak{g};\mathbb{C}_\lambda)
 \simeq
   \begin{cases}  \mathbb{C}w_{0,\lambda}\oplus\mathbb{C}w_{2\lambda+n,\lambda}
                       &\text{if $\lambda+\frac{n}{2}\in\mathbb{N}_+$ },
                       \\
                       \mathbb{C}w_{0,\lambda}\oplus\mathbb{C}w_{2\lambda+n,\lambda}
                            \oplus \mathcal{H}^{\lambda+1} (\mathbb{R}^{p-1,q-1})
                       &\text{if $\lambda \in \mathbb{N}$},
                       \\
                       \mathbb{C}w_{0,\lambda}
                       &\text{otherwise}.
   \end{cases}
\\[1ex]
&\text{$n$:odd}
\\
&&&
\operatorname{Sol}(\mathfrak{g},\mathfrak{g};\mathbb{C}_\lambda)
\simeq
   \begin{cases}  \mathbb{C}w_{0,\lambda}\oplus\mathbb{C}w_{2\lambda+n,\lambda}
                       &\text{if $\lambda+\frac{n}{2}\in\mathbb{N}_+$},
                       \\
                       \mathbb{C}w_{0,\lambda}
                            \oplus \mathcal{H}^{\lambda+1} (\mathbb{R}^{p-1,q-1})
                       &\text{if $\lambda \in \mathbb{N}$},
                       \\
                       \mathbb{C}w_{0,\lambda}
                       &\text{otherwise}.
   \end{cases}
\end{align*}
\end{proposition}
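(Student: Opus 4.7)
The plan is to specialize the F-method used in Theorem \ref{basis} to the coincident case $G'=G$. Under the isomorphism \eqref{eqn:phi}, the singular vector space $M_{\gop}^{\gog}(\lambda)^{\gon_+}$ corresponds to polynomial solutions of $P_j(\lambda)f=0$ for \emph{all} $j=1,\ldots,n$, i.e.\ one more equation than in \eqref{dtpis}. First I would replay the commutator step from the proof of Theorem \ref{basis}, now with indices running through $1\le i,j\le n$: this yields $(E-\lambda-1)(\epsilon_j\xi_j\partial_{\xi_i}-\epsilon_i\xi_i\partial_{\xi_j})f=0$ and hence forces every homogeneous $f\in\mbox{\rm Sol}(\mathfrak{g},\mathfrak{g};\mathbb{C}_\lambda)$ to be either of degree $\lambda+1$ or else $SO_o(p-1,q-1)$-invariant, the two regimes being disjoint for $n\ge 2$.

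For the degree-$(\lambda+1)$ contribution (nonzero only when $\lambda\in\mathbb{N}$), the Euler term $(\lambda-E)\partial_{\xi_j}f$ vanishes so $P_j(\lambda)f=-\tfrac{i}{2}\epsilon_j\xi_j\square f$, and the constraint collapses to $\square f=0$; such $f$ fill exactly $\mathcal{H}^{\lambda+1}(\mathbb{R}^{p-1,q-1})$. For the invariant contribution, classical invariant theory (valid for $n\ge 2$) writes $f=h(r)$ with $r:=\sum_{i=1}^{n}\epsilon_i\xi_i^2$; substituting $\square r^N=2N(n+2N-2)r^{N-1}$ and $\partial_{\xi_j}r^N=2N\epsilon_j\xi_j r^{N-1}$ reduces $P_j(\lambda)r^N=0$ to the single scalar equation $N(n+2\lambda-2N)=0$. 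Hence the invariant part of $\mbox{\rm Sol}$ is $\mathbb{C}\cdot 1$ generically, augmented by $\mathbb{C}\cdot r^N$ precisely when $N:=\lambda+\tfrac{n}{2}\in\mathbb{N}_+$.

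The remaining piece is to identify $r^N$ with $w_{2\lambda+n,\lambda}$ up to a nonzero scalar. Theorem \ref{basis} places $w_{2N,\lambda}$ inside $\mbox{\rm Sol}(\mathfrak{g},\mathfrak{g}';\mathbb{C}_\lambda)$, and at degree $2N\ne\lambda+1$ (automatic for $n\ge 2$) the $SO_o(p-1,q-2)$-invariant component of that space is one-dimensional; since $r^N$ is obviously $SO_o(p-1,q-2)$-invariant and is a $\gog$-singular vector, the two are forced to be proportional. Assembling the three ingredients---the constant $w_{0,\lambda}$, the invariant $w_{2\lambda+n,\lambda}$, and the harmonic space $\mathcal{H}^{\lambda+1}$---gives the three cases in the proposition; the parity split arises because, for $\lambda\in\mathbb{N}$, one has $\lambda+\tfrac{n}{2}\in\mathbb{N}_+$ iff $n$ is even.

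The main obstacle I expect is ensuring that the proportionality scalar between $r^N$ and $w_{2\lambda+n,\lambda}$ at the specific value $\lambda=N-\tfrac{n}{2}$ is really nonzero: both the Gegenbauer polynomial $\mathcal{C}_{2N}^{-N+1/2}$ in \eqref{eqn:FK} and the Pochhammer normalization in \eqref{eqn:wN} specialize at a non-generic point, so one must verify explicitly that they combine to a finite nonzero multiple of $r^N$. This is the only special-function input needed and can be controlled by the generating-function identity $(1-2xt+t^2)^{N-\frac{1}{2}}$ expanded modulo terms of degree $>2N$ in $t$.
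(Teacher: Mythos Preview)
Your argument is correct and follows a somewhat different line from the paper's. The paper exploits the inclusion $\operatorname{Sol}(\mathfrak{g},\mathfrak{g};\mathbb{C}_\lambda)\subset\operatorname{Sol}(\mathfrak{g},\mathfrak{g}';\mathbb{C}_\lambda)$: since Theorem~\ref{basis} already lists the latter, it suffices to test which of the known vectors $w_{K,\lambda}$ happen to be $\mathfrak{so}(p-1,q-1)$-invariant; this is done by asking when the Gegenbauer polynomial $C_K^{-\lambda-\frac{n-1}{2}}(s)$ collapses to a multiple of $(1-s^2)^m$, and then substituting $(1-s^2)^m$ into the Gegenbauer ODE to find the constraint $K=2m$, $\lambda+\tfrac{n}{2}=m$. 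You instead run the commutator step with the full index range to deduce $\mathfrak{so}(n)$-invariance directly, invoke classical invariant theory to write any non-harmonic-degree solution as $h(r)$, and reduce $P_j(\lambda)r^N=0$ to the scalar relation $N(n+2\lambda-2N)=0$. Your route is arguably more elementary: it bypasses any explicit special-function manipulation, while the paper's route has the advantage of identifying the extra singular vector immediately as one of the $w_{K,\lambda}$ without a separate matching step.

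Your closing worry about the proportionality constant is not a genuine obstacle. The vectors $w_{K,\lambda}$ in \eqref{eqn:wN} were renormalized precisely so as to be nonzero for every $\lambda\in\mathbb{C}$; since $r^N$ and $w_{2N,\lambda}$ are both nonzero $SO_o(p-1,q-2)$-invariant elements of $\operatorname{Sol}(\mathfrak{g},\mathfrak{g}';\mathbb{C}_\lambda)$ of degree $2N\neq\lambda+1$, and that space is one-dimensional by Theorem~\ref{basis}, they are automatically nonzero multiples of one another. No generating-function computation is required.
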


\begin{proof}
[Proof of Proposition \ref{prop:girr}]
Most of the proof was already given in that of Theorem \ref{basis}.
In particular, we see
\[
\mathcal{H}^{\lambda+1}(\mathbb{R}^{p-1,q-1})
\subset \operatorname{Sol} (\mathfrak{g},\mathfrak{g};\mathbb{C}_\lambda)
\quad\text{for all $\lambda\in\mathbb{N}$}.
\]
In view of the obvious inclusion
$
\operatorname{Sol}(\mathfrak{g},\mathfrak{g};\mathbb{C}_\lambda)
\subset \operatorname{Sol}(\mathfrak{g},\mathfrak{g}';\mathbb{C}_\lambda),
$
it suffices to determine 
for which $K$ and $\lambda$ the vector $w_{K,\lambda}$
belongs to $\operatorname{Sol}(\mathfrak{g},\mathfrak{g};\mathbb{C}_\lambda)$
when $K\in\mathbb{N}_+$.
This is equivalent to the condition that $f_{K,\lambda}$ defined in \eqref{eqn:FK}
is $\mathfrak{so}(p,q)$-invariant.
The form  of the polynomial $f_{K,\lambda},$ the relation
$\epsilon_n=-1,$  and the invariance of $f_{K,\lambda}$ with respect to
$\mathfrak{so}(p,q)$ imply that
$C_K^{-\lambda-\frac{n-1}{2}}(s)$
is a multiple of $(1-s^2)^m$ for some $m\in\mathbb{N}$.
This happens if and only if
\[
K=2m \quad\text{and}\quad
\lambda+\frac{n}{2}=m.
\]
To see this,
we verify whether or not $(1-s^2)^m$ satisfies the Gegenbauer differential equation like
$C_{2m}^\alpha(s)$ for $\alpha=-\lambda-\frac{n-1}{2}$.
Since
\[
((1-s^2)\frac{d^2}{ds^2} - (1+2\alpha s^2)\frac{d}{ds} + 4m(m+\alpha)s^2)
(1-s^2)^m
= 2m(2m+2\alpha-1) s^2(1-s^2)^{m-1},
\]
this is zero if and only if $2m+2\alpha-1=0$,
namely, $\lambda+\frac{n}{2}=m$.
Hence Proposition \ref{prop:girr} is proved.  
\end{proof}
Forgetting a concrete description
 of singular vectors
 in Proposition \ref{prop:girr},
we still have the following abstract result as a corollary:
\begin{corollary}\label{cor:girred}
The generalized Verma module $M_{\mathfrak{p}}^{\mathfrak{g}}(\lambda)$
is irreducible
 if and only if
$\frac{n}{2}+\lambda \not \in\mathbb{N}_+$ ($n$ even);
$\lambda \not\in \mathbb{N}$
 and $\frac{n}{2}+\lambda \not\in \mathbb{N}_+$ ($n$ odd).
\end{corollary}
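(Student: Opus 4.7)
The plan is to derive the corollary directly from Proposition \ref{prop:girr} by combining it with the standard observation that a generalized Verma module of scalar type is irreducible if and only if its only singular vectors sit at the top weight. Concretely, I would first record that, because $M_{\mathfrak{p}}^{\mathfrak{g}}(\lambda)$ lies in category $\mathcal{O}$, any nonzero proper submodule contains a nonzero singular vector, i.e.\ a nonzero element of $M_{\mathfrak{p}}^{\mathfrak{g}}(\lambda)^{\mathfrak{n}_+}$; conversely, any singular vector of weight strictly less than $\lambda$ generates a proper nonzero submodule. Hence $M_{\mathfrak{p}}^{\mathfrak{g}}(\lambda)$ is irreducible if and only if $M_{\mathfrak{p}}^{\mathfrak{g}}(\lambda)^{\mathfrak{n}_+} = \mathbb{C}\cdot (1\otimes 1_\lambda)$.

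Next I would use the $L$-equivariant isomorphism $\varphi\colon \operatorname{Sol}(\mathfrak{g},\mathfrak{g};\mathbb{C}_\lambda) \xrightarrow{\sim} M_{\mathfrak{p}}^{\mathfrak{g}}(\lambda)^{\mathfrak{n}_+}$ furnished by the F-method together with the weight computation that $w_{K,\lambda}$ has weight $\lambda-K$, so that only $w_{0,\lambda}$ sits at the top weight $\lambda$, while every element of $\mathcal{H}^{\lambda+1}(\mathbb{R}^{p-1,q-1})$ has weight $\lambda-(\lambda+1) = -1 \neq \lambda$ (since $\lambda+1 \ge 1$). Thus irreducibility is equivalent to the equality $\operatorname{Sol}(\mathfrak{g},\mathfrak{g};\mathbb{C}_\lambda) = \mathbb{C} w_{0,\lambda}$, and one simply reads off from Proposition \ref{prop:girr} when this equality holds.

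The remaining task is a small case analysis. Recall $n = p+q-2 > 2$. For $n$ even, the extra summand $\mathbb{C} w_{2\lambda+n,\lambda}$ appears precisely when $\lambda + \tfrac{n}{2}\in\mathbb{N}_+$; if furthermore $\lambda \in \mathbb{N}$, then automatically $\lambda + \tfrac{n}{2}\in\mathbb{N}_+$ (since $\tfrac{n}{2} \ge 2$), so the condition $\lambda \in \mathbb{N}$ is already subsumed. Hence for $n$ even, $\operatorname{Sol} = \mathbb{C} w_{0,\lambda}$ iff $\lambda + \tfrac{n}{2}\notin \mathbb{N}_+$. For $n$ odd, $\tfrac{n}{2}$ is a half-integer, so the conditions $\lambda \in \mathbb{N}$ and $\lambda+\tfrac{n}{2}\in\mathbb{N}_+$ are mutually exclusive and independent; hence $\operatorname{Sol} = \mathbb{C} w_{0,\lambda}$ iff $\lambda \notin \mathbb{N}$ and $\lambda + \tfrac{n}{2}\notin \mathbb{N}_+$.

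There is essentially no obstacle here beyond bookkeeping, since the heavy lifting has already been done in Proposition \ref{prop:girr}. The only mildly delicate point is to remember that in the case $n$ even the two conditions collapse to a single one because $\lambda \in \mathbb{N}$ automatically forces $\lambda + \tfrac{n}{2}$ to lie in $\mathbb{N}_+$, whereas this implication fails for $n$ odd; keeping this parity dichotomy straight is what the statement of the corollary records.
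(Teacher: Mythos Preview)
Your proposal is correct and takes essentially the same approach as the paper, which presents the corollary as an immediate consequence of Proposition~\ref{prop:girr} without spelling out a separate proof. You have simply filled in the standard details that the paper leaves implicit: irreducibility of a scalar-type generalized Verma module is equivalent to the space of singular vectors being reduced to the highest weight line, and one then reads off from Proposition~\ref{prop:girr} exactly when $\operatorname{Sol}(\mathfrak{g},\mathfrak{g};\mathbb{C}_\lambda)=\mathbb{C}w_{0,\lambda}$, handling the parity of $n$ to see how the two possible obstructions interact.
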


 If $F$ is a homomorphism from a
generalized Verma module $M_\gop^\gog(V)$
 to $M_\gop^\gog(\mC_\lambda),$
 then the image of $1\otimes V$ by $F$ 
is an $\gol$-irreducible subspace of 
$M_\gop^\gog(\lambda)^{\gon_+}=\varphi(\mbox{\rm Sol}(\gog,\gog;{\mathbb C}_\lambda))$.
In such a way, Proposition \ref{prop:girr} gives not only a new proof
 of the well-known classification of all homomorphisms from a generalized Verma module  $M_\gop^\gog(\mC_\lambda)$
for this specific pair $(\mathfrak{g},\mathfrak{p})$,
 but also an explicit construction
 of such homomorphisms
 by special values of the Gegenbauer polynomials.
  
In the dual language, the singular vector $w_{0,\lambda}$ gives the identity operator 
on the induced representations 
${\Ind}_{P}^{G}(\mC_{-\lambda})$, whereas $w_{2\lambda+n,\lambda}$ and ${\fam2 H}^{\lambda+1}(\mR^{p-1,q-1})$
give rise to $G$-intertwining differential operators  
\begin{alignat}{2}
&
{\Ind}_{P}^{G}(\mC_{\frac{n}{2}-m})\to\,
&&{\Ind}_{P}^{G}(\mC_{\frac{n}{2}+m}), 
\label{eqn:GG1}
\\
&
{\Ind}_{P}^{G}(\mC_{1-k})\to\,
&&{\Ind}_{P}^{G}((-1)^k\otimes{\fam2 H}^k(\mR^{p-1,q-1})\otimes\mC_{1}), 
\label{eqn:GG2} 
\end{alignat}
with $m=\lambda+\frac{n}{2}, k=1+\lambda\,\in\mN$, respectively. In the non-compact picture, \eqref{eqn:GG1}
is given by the $m$-th power of the Laplacian
\[
\square^m: \lC^\infty(\mR^{p-1,q-1})\to\lC^\infty(\mR^{p-1,q-1}), 
\]
where $\square = \frac{\partial^2}{\partial x_1^2}+\cdots +\frac{\partial^2}{\partial x_{p-1}^2}
-\frac{\partial^2}{\partial x_p^2}-\cdots -\frac{\partial^2}{\partial x_{p+q-2}^2}$. 
For the operator \eqref{eqn:GG2}, we use the fact that the representation of 
$SO_o(p-1,q-1)$ on ${\fam2 H}^k(\mR^{p-1,q-1})$ is self-dual. Then \eqref{eqn:GG2}
is described in the non-compact picture by 
\begin{eqnarray}
& \lC^\infty(\mR^{p-1,q-1})\otimes{\fam2 H}^k(\mR^{p-1,q-1}) \to  \lC^\infty(\mR^{p-1,q-1}),
\nonumber \\ \nonumber
&  (u(x),f(\xi)) \mapsto f(\frac{\partial}{\partial x_1},\cdots , \frac{\partial}{\partial x_{p+q-2}})u .
\end{eqnarray}
Note that the powers $\square^m, m=\lambda+\frac{n}{2}$
with $\lambda\leq 0$ are special in the following sense. There exists their `curved' versions, i.e., on any manifold with  a given conformal structure, there are conformally invariant operators of
order $m=1,\ldots, \frac{n}{2}$ with symbol equal to $\square^m.$
These operators were constructed in $\cite{GJMS},$ and they are usually called
the GJMS operators. The structure of lower order curvature terms is very
complicated and was studied in many publications (see, e.g., \cite{ju2} 
and the references
in \cite{ju}). On the contrary, curved analogues of $\square^m$ for $m>\frac{n}{2}$ do not exist ($\cite{GH}$).

The series $\mathscr{D}_k$ of operators constructed above are examples 
of the first BGG operators (equations for the conformal Killing tensors in Theorem \ref{thm:4.3}) 
given by the projection
 to the symmetric trace-free part of the multiple gradient $\nabla_{(a}\ldots\nabla_{b)_0}\sigma$ 
(number of indices being $k$).
 As the trace-free condition translates by the Fourier transform to the harmonicity condition, the 
 operators $\mathscr{D}_k$ correspond by Proposition \ref{prop:girr} to $\mathcal{H}^k(\mathbb{R}^{p-1,q-1})$
 with $k:=\lambda+1 \in \mathbb{N}_+$.
 

\section{Dirac operators and $Spin_o(p,q)$} \label{sec:5}
 \numberwithin{equation}{section}
 \setcounter{equation}{0}

In the present section 
 we extend the scalar-valued results 
 considered in Section \ref{juhlexample} 
 to those for spinor-valued sections.  
The symmetries for the base manifolds
 remain the same, given by the pair of Lie groups 
$(\widetilde{G},\widetilde{G}')=(Spin_o(p,q), Spin_o(p,q-1))$.  
The main results are Theorems \ref{thm:5.6} and \ref{finalspinor}. 

\subsection{Notation}\label{4.1}

We shall use the same convention as in Section \ref{sec:4}. 
Let $p\geq 1, q\geq 2, n=p+q-2,n=n'+1,$ and
we suppose that the quadratic form
\eqref{eqn:pq}
 on ${\mathbb{R}}^{p+q}={\mathbb{R}}^{n+2}$ is given
 as in Section \ref{3.1}. Let us consider the associated Clifford algebra
 $\lC_{p,q}$.  
It is generated
 by an orthonormal basis $e_0,\ldots,e_{p+q-1}$ with
the relations 
\[
     e_i^2=-\epsilon_i \text{ for }
     i=1,\dots ,p+q-2
     \text{ and }
     e_0e_{p+q-1}+e_{p+q-1}e_0=1.  
\]
Let $\lC_{p-1,q-1}$ be its subalgebra generated by 
$e_1,\ldots,e_{p+q-2}.$ 
We realize $Spin(p,q)$ in $\lC_{p,q}$
 and define ${\tilde G}$ to be the 
 identity component $Spin_o(p,q)$. 
We write 
\[
   \Pi:\, Spin_o(p,q)\to SO_o(p,q)
\] 
 for the canonical homomorphism, 
 which is a double covering.  
Via $\Pi$, 
 $\tilde{G}$ acts on $\mR^{p,q}$ preserving
the null cone ${\mathcal{N}}_{p,q}$ of $\mR^{p,q}$ 
 and the projective null cone ${\mathbb{P}}{\mathcal{N}}_{p,q}$.  
We shall keep the notation as in Section  \ref{3.1}.
The subgroup $\tilde{P}\subset \tilde{G}$ is  defined 
as the stabilizer of the chosen null line generated by 
the vector $(1,0,\ldots,0),$
namely, 
 $\tilde{P}=\Pi^{-1}(P).$ 
According to the Langlands decomposition 
$P=L N_+=MAN_+$ in $G=SO_o(p,q)$, 
 we have a Langlands decomposition 
\[
   \tilde P=\tilde L N_+=\tilde MAN_+
\]
by setting $\tilde{L}:=\Pi^{-1}(L)$
 and $\tilde{M}:=\Pi^{-1}(M) \simeq Spin(p-1,q-1)$.  
Here by a little abuse of notation,
 we regard $A$ and $N_+$ as subgroups of $\tilde G$.  

The Lie algebras $\tilde{\gog}$ and $\tilde{\gop}$
 are isomorphic to ${\mathfrak {g}}$ and ${\mathfrak {p}}$, 
respectively, 
 considered in the case $G=SO_o(p,q)$.
We take a Cartan subalgebra $\goh$ in $\gog$ so that
$\goh\subset\gol$.  
Let us denote by $\mS_{\pm}^n\equiv \mS_\pm^{p-1,q-1}$
 the irreducible half-spin representations
for $\tilde{M}\simeq{Spin}(p-1,q-1)$ with $n=p+q-2$ even, and $\mS^{n}\equiv \mS^{p-1,q-1}$ 
the spin representation
for $\tilde{M}\simeq{Spin}(p-1,q-1)$ with $n=p+q-2$ odd. We have $\mS_\pm^{n}\simeq \mS^{n-1}$ for $n$ even 
and $\mS^{n}\simeq \mS_+^{{n-1}}\oplus \mS_-^{{n-1}}$ for $n$ odd.
By an abuse of notation, 
 we write $\mS$ for ${\mS}_\pm$ in the proof, since the differential action is the same. 
The differential action of the spinor representation
 is given by 
\[
  {\mathfrak {so}}(p-1,q-1) \to {\mathcal{C}}_{p,q}, 
\quad
  \epsilon_i\epsilon_jE_{ij}-E_{ji}
  \mapsto -\frac{1}{2}\epsilon_i e_i e_j
  \quad
  (1\leq i\not= j\leq n).  
\]
Here $E_{ij}$ stands for the matrix unit
 with $1$ at the $(i,j)$-component, 
 and we recall that 
$
   \{ X_{ij}=\epsilon_i\epsilon_jE_{ij}-E_{ji}:
 1 \le i < j \le n=p+q-2\}$ forms 
 a basis of ${\mathfrak {so}}(p-1,q-1)$.

\subsection{Representations $d\pi_\la$ and $d\tilde{\pi}_\la.$}
\label{srsection}

For $\lambda \in {\mathbb{C}}$,  
 we define the twisted spinor representation 
 $\mS_\la$ 
 of the Levi factor $\widetilde L=\tilde MA$
 as the outer tensor product
 $\mS \otimes \mC_\lambda$,
 where ${\mathbb{C}}_{\lambda}$ is the one-dimensional 
 representation of $A$
 with the same normalization
 as in Section  \ref{3.2}. 
The differential representation of 
 $\widetilde{\mathfrak {l}}\simeq \mathfrak {l}$
 has the highest weight
 $(\la+\frac{1}{2},\frac{1}{2},\ldots,\frac{1}{2})$
 (and $(\la+\frac{1}{2},\frac{1}{2},\ldots,-\frac{1}{2})$
 for $n$ even).
We extend $\mS_\la$ 
 to a representation of $\widetilde P$
 by letting the unipotent radical $N_+$ act trivially.  
The (unnormalized) induced representation $\Ind^{\tilde G}_{\tilde P}\mS_\la$,
 is denoted by $\pi_{S,\lambda}$, 
 or simply by $\pi_{\lambda}$.  
The representation space
 is identified with $\lC^\infty(\tilde{G},\mS_\la)^{\tilde P}$,
 consisting of smooth functions
 $F:\widetilde G \to \mS_\lambda$
 subject to 
$$
  F(g\,\tilde{m}an)=a^{-\la}
  \mS(m)^{-1}F(g),\quad 
  \text{for all }g\in \tilde{G},\, \tilde{p}=\tilde{m}an\in \tilde{P}, 
$$
where 
$$
\Pi(\tilde{m}an)=\left(
\begin{array}{ccc}
\epsilon(m) a& \star & \star \\ 
0 & m & \star\\
0 & 0 & \epsilon(m) a^{-1} 
\end{array}
\right),\;
m \in SO(p-1,q-1),\,a>0,   
$$
and $\epsilon(m)=+1$ or $-1$ according to whether or not $m$ belongs
to the identity component of $SO_o(p-1,q-1)$.

Similarly, we may treat ${\Ind}_{\widetilde P}^{\widetilde G}(\mathrm{sgn}\, \otimes\, \mS_\lambda)$
by the condition $F(g\tilde{m}\tilde{a}n)=\epsilon(m)^{-1}a^{-\lambda}{\mS}(m^{-1})F(g)$
as in the scalar case (see Remark \ref{rem:4.4.}), but we omit it because the differential
action is the same and the main results hold by a small modification in the signature 
cases. 

Restricting to the open Bruhat cell, 
 we have the non-compact picture
 $\lC^{\infty}({\mathbb{R}}^{p-1,q-1}, \mS_{\lambda})$ 
 of the induced representation 
 $\operatorname{Ind}_{\tilde P}^{\tilde G}(\mS_{\lambda})$.  
In order to calculate the action
 of $d\pi_{\lambda}(Z)$
 in the non-compact picture 
 for $Z\in \gon_+({\mathbb R})$,
 we apply
the previous computation in the scalar case. 
We already observed
 in \eqref{eqn:4.8} 
 that if $Z \in {\mathfrak {n}}_+({\mathbb R})$
 and $X \in {\mathfrak {n}}_-({\mathbb R})$
 are sufficiently small, 
then the element $\tilde m \in \tilde M$
 determined by the condition 
\[
  (\exp Z)^{-1} \exp X \in N_-\tilde m A N_+
\]
 behaves as
\begin{eqnarray}
\label{eqn:5.1}
\operatorname{Id}-\Pi(\tilde m)
 \sim \mJ {}^t\! Z\otimes {}^t\! X\mJ - X\otimes Z=\sum_{i,j=1}^n z_ix_j (\epsilon_i\epsilon_jE_{ij}-E_{ji}),
\end{eqnarray}
up to the first order in $\| Z \|$.  
The right-hand side of \eqref{eqn:5.1}
 acts as the multiplication
by the element 
\begin{eqnarray}\label{eqn:Clzx}
-\frac{1}{2}\sum_{i\not= j}^nz_ix_j\epsilon_ie_ie_j &=&
-\frac{1}{2}\Big((\sum_{i=1}^n\epsilon_iz_ie_i)(\sum_{j=1}^nx_je_j) -\sum_{j=1}^nz_ix_i\epsilon_ie_i^2\Big)
\nonumber \\
&=&-\frac{1}{2}\big(\uz\, \ux+\sum_{i=1}^nx_iz_i\big)
\end{eqnarray}
in the corresponding 
Clifford algebra, where $\ux=\sum_1^n x_ie_i,$ and $\uz=\sum_1^n \epsilon_iz_ie_i.$

We define differential operators on $\gon_+({\mathbb R})\simeq {\mathbb R}^n$
in the coordinates $(\xi_1, \cdots, \xi_{n-1}, \xi_n)$ by
\begin{alignat}{2}
D:=&\sum_{k=1}^{n} e_k \partial_{\xi_k}
  =D' + e_n \partial_{\xi_n}
&&\text{(the Dirac operator on ${\mathbb{R}}^{p-1,q-1}$)}, 
\label{eqn:5.6}
\\
E:=
&\sum_{j=1}^{n-1} \xi_j \partial_{\xi_j}
 + \xi_n \partial_{\xi_n}
\qquad
&&\text{(the Euler homogeneity operator)}, 
\notag
\\
\square:=&-D^2=\square'-\pa_{\xi_n}^2, 
\quad
\square'=\sum_{j=1}^{n-1}\epsilon_j \pa_{\xi_j}^2.  
&&
\notag
\end{alignat}

Summarizing the information obtained so far,
 we get the following claim
 as in Lemma \ref{lem:4.1}:
\begin{lemma}\label{lem:5.1}
The basis element $E_j \in {\mathfrak {n}}_+({\mathbb R})$
 $(1 \le j \le n)$ acts 
 on $\lC^\infty(\mR^{p-1,q-1},\mS_\la)$
(i.e., in the non-compact picture for $\lC^\infty(\tilde{G},\mS_\la)^{\tilde P}$) 
in the coordinates $\{x_1, \cdots , x_n\}$ of ${\gon_-({\mathbb R})}\simeq {\mathbb R}^{p-1,q-1}$ 
by 
\begin{eqnarray}\label{eqn:spinaction}
d\pi_{S,\lambda}(E_j)=
-\frac{1}{2}\epsilon_j|X|^2\partial_{x_j}
+x_j(\lambda 
+\sum_k x_k\partial_{x_k}+\frac{1}{2})
+\frac{1}{2}(\epsilon_je_j\ux) .
\end{eqnarray}
 The dual action composed with the Fourier transform is given by 
\begin{eqnarray} \label{eqn:spinactiondual}
d\tilde{\pi}_{S,\lambda}(E_j)
=i\left(\frac{1}{2}\epsilon_j\xi_j\square 
+(\lambda-E-\frac{1}{2})\partial_{\xi_j}-\frac{1}{2}\epsilon_je_jD\right).  
\end{eqnarray}
\end{lemma}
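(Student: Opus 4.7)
The plan is to adapt the proof of Lemma~\ref{lem:4.1} to the spinor-valued setting. The key new input is the transformation rule $F(g\tilde m\,a\,n)=a^{-\lambda}\mathbb{S}(\tilde m)^{-1}F(g)$, which inserts an extra spinor factor into the basic formula
\[
(d\pi_{S,\lambda}(E_j)F)(x)=\frac{d}{dt}\Big|_{t=0}F\!\left(\exp(-tE_j)\,x\right).
\]
Writing $\exp(-tE_j)\,x=\tilde x(t)\tilde m(t)a(t)n(t)$, the first-order expansions of $a(t)$ and $\tilde X(t)$ are already supplied by \eqref{eqn:4.8} and reproduce the scalar differential-operator piece $-\tfrac12\epsilon_j|X|^2\partial_{x_j}+x_j(\lambda+\sum_kx_k\partial_{x_k})$ exactly as in the proof of Lemma~\ref{lem:4.1}. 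Only the spinor contribution $\frac{d}{dt}\big|_{t=0}\mathbb{S}(\tilde m(t))^{-1}$ needs to be computed.

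To evaluate it I would read off the $\mathfrak{l}$-part of $\exp(-tE_j)x$ from the first-order expansion \eqref{eqn:5.1} of $\operatorname{Id}-\Pi(\tilde m(t))$, specialised to $Z=tE_j$, obtaining a Lie-algebra element in $\mathfrak{so}(p-1,q-1)$. Passing through the spinor-representation map of Section~\ref{4.1} and invoking the Clifford identity \eqref{eqn:Clzx} then expresses $\frac{d}{dt}\big|_{t=0}\mathbb{S}(\tilde m(t))^{-1}$ as a scalar multiple of $\epsilon_j e_j\underline{x}+x_j$. The $x_j$-summand here is precisely the diagonal ($k=j$) contribution in which $e_j^2=-\epsilon_j$ produces an extra scalar beyond the pure off-diagonal Clifford piece; adding it to the scalar action shifts the coefficient of $x_j$ from $\lambda+E$ to $\lambda+E+\tfrac12$, while the off-diagonal part provides the honest Clifford term $\tfrac12\epsilon_je_j\underline{x}$, together giving \eqref{eqn:spinaction}.

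Formula \eqref{eqn:spinactiondual} then follows by the mechanical two-step recipe of Steps~2 and~3 of Section~\ref{sec:2 correction}. First pass to the dual action $d\pi^{\vee}_{S,\lambda}(E_j)$ on $\lD'_{[o]}(N_-,\mathbb{S}_\lambda^\vee)$ by taking the formal transpose with a sign flip: every $\partial_{x_k}$ switches sign, compositions reverse, and the resulting commutator $[\partial_{x_k},x_j]=\delta_{jk}$ produces precisely the scalar reshuffle that turns $\lambda+E+\tfrac12$ into $\lambda-E-\tfrac12$. Second, apply the Fourier substitution $x_k\mapsto -i\partial_{\xi_k}$, $\partial_{x_k}\mapsto -i\xi_k$: the second-order term $|X|^2\partial_{x_j}$ is transformed into a combination of $\xi_j\square$ and $\partial_{\xi_j}$ via $[\square,\xi_j]=2\epsilon_j\partial_{\xi_j}$, and the Clifford piece $\epsilon_je_j\underline{x}=\epsilon_je_j\sum_kx_ke_k$ becomes a scalar multiple of $\epsilon_je_jD$ with Dirac operator $D$ as defined in~\eqref{eqn:5.6}.

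The main technical obstacle is purely sign and ordering bookkeeping: tracking the Clifford relations $e_i^2=-\epsilon_i$ and $e_ie_k=-e_ke_i$ (for $i\neq k$) through the diagonal-versus-off-diagonal split in the spin contribution, and keeping the commutator shifts straight while first taking the transpose and then Fourier-transforming. Once these are handled systematically the scalar contribution from Lemma~\ref{lem:4.1} combines cleanly with the Clifford contribution from \eqref{eqn:Clzx}, and both \eqref{eqn:spinaction} and \eqref{eqn:spinactiondual} fall out.
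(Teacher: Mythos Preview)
Your proposal is correct and follows essentially the same approach as the paper's own proof: both reduce to the scalar computation of Lemma~\ref{lem:4.1} plus the spinor correction coming from the Clifford identity \eqref{eqn:Clzx}, and then obtain \eqref{eqn:spinactiondual} by the standard dual-plus-Fourier recipe. The paper's proof is extremely terse (it simply writes $d\pi_{S,\lambda}(E_j)=d\pi_{\lambda}(E_j)+\tfrac12(x_j+\epsilon_j e_j\ux)$ and the analogous identity $d\tilde\pi_{S,\lambda}(E_j)=P_j(\lambda)-\tfrac{i}{2}(\partial_{\xi_j}+\epsilon_j e_j D)$), while you spell out the intermediate bookkeeping in more detail; the content is the same.
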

\begin{proof}
It follows from (\ref{eqn:Clzx}) that 
\[
d\pi_{S,\lambda}(E_j)=d\pi_{\lambda}(E_j)+\frac{1}{2}(x_j+\epsilon_je_j\ux),
\]
whence the formula (\ref{eqn:spinaction}). In turn, 
\[
d\tilde{\pi}_{S,\lambda}(E_j)
=P_j(\lambda)-\frac{i}{2}(\partial_{\xi_j}+\epsilon_je_jD),
\]
whence the formula (\ref{eqn:spinactiondual}) owing to Lemma \ref{lem:4.1}.
\end{proof}

\subsection{The space  $\mbox{\rm Sol}$ of singular vectors}

In the scalar case, we proved in Theorem \ref{basis} that $\mathrm{Sol}(\gog, \gog'; {\mathbb C}_\lambda)$
consists of polynomials which are invariant under $\gom'\simeq\mathfrak{so}(n-1,{\mathbb C})$ as far as 
$\lambda\notin\mN$. In the spinor case, we shall consider first such invariant solutions. For 
this, we work with $\lC_n^{\mathbb C}(\simeq\lC_{p-1,q-1}\otimes_{\mR}\mC)$-valued polynomials in 
$\xi_1,\cdots , \xi_n$, on which $\widetilde{M}\simeq Spin(p-1,q-1)$ acts as 
\[
s\mapsto gs(g^{-1}\cdot)\quad \text{for} \quad 
s \in \mathrm{Pol}[\xi_1, \cdots , \xi_n]\otimes\lC^\mC_n .
\]
It is obvious that the following elements 
$$
\uxi':=\sum_{j=1}^{n-1}\epsilon_j\,e_j\,\xi_j,\,\underline{\xi_n}
    :=\epsilon_n\, e_n\xi_n, \xi_n
$$
belong to $(\mathrm{Pol}[\xi_1, \cdots , \xi_n]\otimes\lC^\mC_n)^{\widetilde{M'}}$,
and so does any polynomial generated by these three elements. We note
 $(\uxi')^2=-|\xi'|^2=-\sum_{j=1}^{n-1}\epsilon_j\xi_j^2$
 and $(\underline{\xi_n})^2=-\epsilon_n\xi_n^2=\xi_n^2$ as $\epsilon_n=-1$. 
We set
\[
t:=\epsilon_n\frac{|\xi'|^2}{\xi_n^2}.
\]
Then homogeneous polynomials $F_K$ of degree $K$ 
generated by $\uxi', \underline{\xi_n}$ and $\xi_n$
are written as follows:
for $K=2N$ it is of the form
\begin{eqnarray}\label{pol:even}
F_{2N}(\xi_1,\cdots ,\xi_n)=\xi_n^{2N}P(t)+ \xi_n^{2N-2} Q(t)\underline{\xi'}\underline{\xi_n}\, , 
\end{eqnarray}
where $P(t)$ and $Q(t)$ are polynomials in the variable $t,$ 
$P(t)$ is  of degree
 $N$ and $Q(t)$ is of degree $N-1$;
for $K=2N+1$, $F_K$ is of the form:
\begin{eqnarray}\label{pol:odd}
F_{2N+1}(\xi_1,\cdots ,\xi_n)=\xi_n^{2N}(P(t)\uxi'+Q(t)\underline{\xi_n}),
\end{eqnarray}
where both  $P(t)$ and $Q(t)$ are polynomials of degree $N.$ 

Let us consider the question when the spinor
$F_K\cdot s_{\lambda}$, $s_\lambda\in\mS_\lambda$, belongs to
$\mathrm{Sol}(\gog, \gog'; {\mathbb S}_\lambda)$, namely, 
$F_K\cdot s_\lambda$ is annihilated by the operators
$$
-2i d \tilde \pi_{S,\lambda}(E_j)
=\epsilon_j\xi_j\square
-(2E-2\la+1)\pa_{\xi_j}-\epsilon_je_j\,D,\,j=1,\ldots,n-1, 
$$
where the notation \textquotedbl$\cdot$\textquotedbl$\,$ in $F_K\cdot s_\lambda$ means 
the tensor product followed by
the Clifford multiplication.
This leads us
 to a system of ordinary differential equations
 for the polynomials $P(t)$ and $Q(t).$
We shall first treat the case of even homogeneity $K=2N.$

\begin{lemma}
\label{spinorhypergeom}
Let $\lambda \in {\mathbb{C}}$, $N \in {\mathbb{N}}$ and let $s_\lambda\in\mS_\lambda$ be a non-zero vector.  
For polynomials $P(t)$ and $Q(t)$, we set 
\[
F_{2N} =\xi_n^{2N}P(t)+\xi_n^{2N-2}Q(t)\uxi'\underline{\xi_n}  .
\]
Then $F_{2N}\cdot s_\lambda\in\mathrm{Sol}(\gog,\gog';\mS_\lambda)$ 
 if and only if
 the following system of ordinary differential equations is satisfied:
\begin{align}
& R(2N, -\lambda-\frac{n}{2}+1)P=0
\label{5.7}
\\
& R(2N-1, -\lambda-\frac{n}{2}+1)Q=0,
\label{5.8}
\\
&-2\,N\,P+2\,t\,P'+(4N-2\la-n)\,Q-2\,tQ'=0,
\label{5.9}
\\
&2\,P'-(2N-1)\,Q+2\,tQ'=0.
\label{5.10}
\end{align}
The first two equations actually 
 follow from the last two equations.
\end{lemma}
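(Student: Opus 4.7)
The plan is to substitute $F_{2N}\cdot s_\lambda$ into the defining equations $-2i\,d\tilde\pi_{S,\lambda}(E_j)=\epsilon_j\xi_j\square+(2\lambda-2E-1)\partial_{\xi_j}-\epsilon_je_jD$ for $j=1,\dots,n-1$, and to read off the four ODEs by comparing coefficients along a Clifford-polynomial basis. Because $F_{2N}$ is homogeneous of degree $2N$, the Euler operator acts on $\partial_{\xi_j}F_{2N}$ as the scalar $2N-1$, so the first bracket is precisely $-2i\,P_j(\lambda-\tfrac12)$ in the notation of Lemma~\ref{lem:4.1}: the shift $\lambda\mapsto\lambda-\tfrac12$ is exactly the spin twist coming from the $-\tfrac12$ in (\ref{eqn:spinactiondual}), and it accounts for the passage from the scalar Gegenbauer parameter $-\lambda-(n-1)/2$ to the shifted parameter $-\lambda-n/2+1$ that appears in (\ref{5.7})--(\ref{5.8}). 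Writing $F_{2N}=A+B\,\uxi'\underline{\xi_n}$ with $A=\xi_n^{2N}P(t)$ and $B=\xi_n^{2N-2}Q(t)$, I would first check that $\square A$ is scalar-Clifford while $\square(B\,\uxi'\underline{\xi_n})$ stays in the $\uxi'\underline{\xi_n}$-component (cross-terms generated from $\partial_{\xi_k}\uxi'=\epsilon_ke_k$ collapse via $\uxi'e_n=-\uxi'\underline{\xi_n}/\xi_n$ and $\{e_k,\underline{\xi_n}\}=0$).

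The main computation is $DF_{2N}$. Using the Clifford identities $D\uxi'=-(n-1)$, $D\underline{\xi_n}=-1$ and $D(\uxi'\underline{\xi_n})=-(n-1)\underline{\xi_n}+\uxi'$, a direct expansion yields
\[
DF_{2N}=\xi_n^{2N-2}\bigl[\,U(t)\,\uxi'+V(t)\,\underline{\xi_n}\,\bigr],
\]
with $U=-2P'+(2N-1)Q-2tQ'$ and $V=-2NP+2tP'-2tQ'-(n-1)Q$. Left-multiplying by $-\epsilon_je_j$ and using $\{e_j,\uxi'\}=-2\xi_j$ and $\{e_j,\underline{\xi_n}\}=0$ (for $j<n$), one obtains
\[
-\epsilon_je_j\,DF_{2N}=\xi_n^{2N-2}\epsilon_j\Bigl[\,U\xi_j+U\!\!\sum_{i<n,\,i\ne j}\!\!\epsilon_i\xi_i\,e_ie_j-V\,e_j\underline{\xi_n}\,\Bigr].
\]
Assembling the full expression $-2i\,d\tilde\pi_{S,\lambda}(E_j)F_{2N}$ and collecting along the Clifford-polynomial basis $\{\xi_j,\;\xi_j\uxi'\underline{\xi_n},\;e_j\underline{\xi_n},\;\sum_{i\ne j}\epsilon_i\xi_ie_ie_j\}$ produces four coefficient equations. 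The bivector coefficient forces $U=0$, which is exactly (\ref{5.10}). The $e_j\underline{\xi_n}$-coefficient combines the contribution $-(4N-2\lambda-1)\epsilon_jQ$ from $\partial_{\xi_j}(\uxi'\underline{\xi_n})=\epsilon_je_j\underline{\xi_n}$ with $-\epsilon_jV$ from the Dirac term, giving $V=-(4N-2\lambda-1)Q$, which rearranges to (\ref{5.9}). The scalar $\xi_j$-coefficient, by the scalar-case calculation of Theorem~\ref{basis} with $\lambda\mapsto\lambda-\tfrac12$, equals $R(2N,-\lambda-\tfrac n2+1)P-U$, and becomes (\ref{5.7}) once $U=0$ is imposed; the $\xi_j\uxi'\underline{\xi_n}$-coefficient similarly reduces to (\ref{5.8}) once (\ref{5.9}) is imposed (the homogeneity parameter shifts from $2N$ to $2N-1$ because the factor $\uxi'$ carries one unit of $\xi$-homogeneity that is absorbed by $\partial_{\xi_j}$).

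The assertion that (\ref{5.7})--(\ref{5.8}) follow from (\ref{5.9})--(\ref{5.10}) is therefore already encoded in the extraction above: the scalar $\xi_j$ and $\xi_j\uxi'\underline{\xi_n}$ components vanish identically the moment $U=0$ and $V=-(4N-2\lambda-1)Q$ are in force. The main technical obstacle is the Clifford-algebra bookkeeping: verifying the identity $D(\uxi'\underline{\xi_n})=-(n-1)\underline{\xi_n}+\uxi'$ through the reordering rule $e_n\uxi'=-\uxi'e_n$, tracking the signs induced by the indefinite signature $(p-1,q-1)$ and by $\epsilon_n=-1$, and, most crucially, verifying that $\{1,\,\uxi'\underline{\xi_n},\,e_j\underline{\xi_n},\,\sum_{i\ne j}\epsilon_i\xi_ie_ie_j\}$ are genuinely linearly independent as Clifford-polynomial elements, so that coefficient extraction yields four independent equations and not an artificially smaller (or larger) system.
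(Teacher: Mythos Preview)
Your derivation of the four coefficient equations is essentially the paper's approach: direct substitution of $F_{2N}$ into the operators $d\tilde\pi_{S,\lambda}(E_j)$ and extraction along a Clifford--polynomial basis. Your organization is slightly different (you compute $DF_{2N}=\xi_n^{2N-2}[U\,\uxi'+V\,\underline{\xi_n}]$ first and then left-multiply by $-\epsilon_je_j$, and you split $e_j\uxi'$ into its scalar and bivector parts rather than keeping $e_j\uxi'$ as a basis element as the paper does), but this is only a change of basis in the same four-dimensional space, and your computations of $U$ and $V$ are correct.

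There is, however, a genuine gap in your argument for the final assertion that \eqref{5.7}--\eqref{5.8} follow from \eqref{5.9}--\eqref{5.10}. You claim that ``the scalar $\xi_j$ and $\xi_j\uxi'\underline{\xi_n}$ components vanish identically the moment $U=0$ and $V=-(4N-2\lambda-1)Q$ are in force.'' This is not what your own extraction shows. You correctly identify the scalar $\xi_j$-coefficient as (a nonzero multiple of) $R(2N,-\lambda-\tfrac n2+1)P-U$. When $U=0$ this coefficient \emph{simplifies to} $R(2N,\cdot)P$, but it does not \emph{vanish}: one still needs $R(2N,\cdot)P=0$, which is precisely \eqref{5.7}. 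Your basis change has merely rewritten the four-equation system in the form $\{U=0,\ \text{\eqref{5.9}},\ R(2N,\cdot)P=U,\ \ldots\}$, which is equivalent to $\{$\eqref{5.7}, \eqref{5.8}, \eqref{5.9}, \eqref{5.10}$\}$ but does not exhibit any implication from two of the equations to the other two. The paper proves the implication by a differential-algebraic argument: differentiate \eqref{5.10} once and twice to express $P'$ and $P''$ in terms of $Q,Q',Q''$, and substitute into \eqref{5.9} (respectively into the definition of $R(2N-1,\cdot)Q$) to obtain \eqref{5.8}; an analogous manipulation yields \eqref{5.7}. That step involving \emph{derivatives} of \eqref{5.9}--\eqref{5.10} is essential and is missing from your proposal.
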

\begin{proof}
By Lemma \ref{lem:5.1}, $F_{2N}\cdot s_\lambda\in\mathrm{Sol}(\gog,\gog';\mS_\lambda)$
if and only if 
\[ 
d\tilde{\pi}_{S,\lambda}(E_j)(F_{2N}\cdot s_\lambda)=0\quad\mbox{for}\quad 1\leq j\leq n-1.
\]
We set $F_{2N}=v_1+v_2$ with $v_1=\xi_n^{2N}P(t)$ and 
$v_2=\xi_n^{2N-2}Q(t)\underline{\xi'}\underline{\xi_n}.$ Then
\begin{align*}
\pa_{\xi_j}v_1=& 2\epsilon_n\epsilon_j\xi_j\xi_n^{2N-2}P'(t),
\\
\pa_{\xi_j}^2v_1=&2\epsilon_n\epsilon_j\xi_n^{2N-2}P'(t)+ 4\xi_n^{2N-4} \xi_j^2P''(t),
\\
\square'v_1=&\epsilon_n\xi_n^{2N-2}(4\,t\,P''+(2n-2)\,P'),
\\
\pa_{\xi_n} v_1=&\xi_n^{2N-1}(2N\,P-2\,t\,P'),
\\
\pa_{\xi_n}^2 v_1=&
\xi_n^{2N-2}(2N(2N-1)\,P+(-8N+6)\,t\,P'+4\,t^2\,P''),
\\
e_jD'v_1=&2\epsilon_n\,e_j\xi_n^{2N-2}\, P'\uxi',
\\
e_je_{n}\pa_{\xi_n} v_1=&\epsilon_n\, e_j\xi_n^{2N-2}(2N\,P-2\,t\,P')\underline{\xi_n}.
\end{align*}
Similarly,
\begin{align*}
\pa_{\xi_j}v_2=&\epsilon_n\epsilon_j\xi_n^{2N-4}2Q'\xi_j\underline{\xi'}\underline{\xi_n}+
\xi_n^{2N-2}\epsilon_je_jQ(t)\underline{\xi_n},
\\
\square'\,v_2=&\xi_n^{2N-4}\epsilon_n(4\,t\,Q''+(2n+2)\,Q')\underline{\xi'}\underline{\xi_n},
\\
\pa_{\xi_n} v_2=&\xi_n^{2N-3}((2N-1)\,Q-2\,t\,Q')\underline{\xi'}\underline{\xi_n},
\\
\pa_{\xi_n}^2 v_2=&
\xi_n^{2N-4}((2N-1)(2N-2)\,Q+t\,(-8N+10)\,Q'+4\,t^2\,Q'')\underline{\xi'}\underline{\xi_n},
\\
-e_jD'v_2=&-\epsilon_ne_j\xi_n^{2N-2}(2\,t\,Q'+n'\,Q)\underline{\xi_n},
\\
-e_je_{n}\pa_{\xi_n} v_2=&\epsilon_ne_j\xi_n^{2N-2}((2N-1)\,Q-2\,t\,Q')\underline{\xi'}.
\end{align*}

Collecting all terms of $d\tilde{\pi}_{S,\lambda}(E_j)(F_{2N}\cdot s_\lambda)$
with respect to the basis 
$$
\epsilon_j\epsilon_n\xi_j\xi_n^{2N-2},\;\;\epsilon_j\epsilon_n\xi_j\xi_n^{2N-4}\underline{\xi'}\underline{\xi_n},
\;\;\epsilon_j\,e_j\,\epsilon_n\xi_n^{2N-2}\uxi',\;\;
\epsilon_j\,e_j\,\epsilon_n\xi_n^{2N-2}\underline{\xi_n},
$$
we see that the the system of equations 
 $d \tilde \pi_{S,\lambda} (E_j)(F_{2N}\cdot s_\lambda)=0$ $(1\leq j\leq n-1)$
 is equivalent to the four equations in the lemma.

It can be easily checked that a suitable linear combination
 of the last two equations (\ref{5.9}) and (\ref{5.10})
 and their differentials implies the first two equations 
(\ref{5.7}) and (\ref{5.8}). For 
example, the application of $\frac{d}{dt}$ and $\frac{d^2}{dt^2}$ to the equation (\ref{5.10})
gives 
\begin{eqnarray}\nonumber
2P'=-2tQ'+(2N-1)Q,\quad 2\, P''=-2\,Q'-2\, tQ'' + (2N-1)\, Q',
\end{eqnarray}
and their substitution into the equation (\ref{5.9})
yields the equation (\ref{5.8}). Hence the proof of Lemma \ref{spinorhypergeom}
is complete.
\end{proof}


%
\begin{lemma} \label{defPQ}
Let $N \in {\mathbb{N}}$ and $\lambda\in\mC$, 
and $s_\lambda\in\mS_\lambda$ a non-zero vector.  
We set 
\begin{eqnarray}
\tilde{F}_{2N}(\xi_1,\cdots ,\xi_n) 
= \xi_n^{2N}\widetilde{\mathcal C}^{-\lambda-\frac{n}{2}+1}_{2N}\left(\frac{|\xi'|^2}{\xi_n^2}\right)
+\xi_n^{2N-2}\widetilde{\mathcal C}^{-\lambda-\frac{n}{2}+1}_{2N-1}\left(\frac{|\xi'|^2}{\xi_n^2}\right)
\underline{\xi'}\underline{\xi_n}. 
\label{5.13}
\end{eqnarray}  
Then $\tilde{F}_{2N}\cdot s_\lambda$ is a spinor--valued 
homogeneous polynomial of $\xi_1$, $\cdots$, $\xi_{n-1}$, 
 and $\xi_n$ of degree $2N$, and belongs to 
${\rm{Sol}}({\mathfrak {g}}, {\mathfrak {g}}'; \mS_{\lambda})$.
\end{lemma}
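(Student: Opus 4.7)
The plan is to reduce the claim to Lemma \ref{spinorhypergeom} and then verify the resulting ODE system via standard identities for the Gegenbauer polynomials. In the formula \eqref{5.13}, the argument of $\widetilde{\mathcal{C}}^{\alpha}$ is $|\xi'|^2/\xi_n^2$; since $\epsilon_n=-1$, this equals $-t$ in the notation of Lemma \ref{spinorhypergeom}. Thus $\tilde{F}_{2N}$ fits the ansatz $\xi_n^{2N}P(t)+\xi_n^{2N-2}Q(t)\,\underline{\xi'}\,\underline{\xi_n}$ with
$$
 P(t)=\widetilde{\mathcal{C}}^{\alpha}_{2N}(-t), \qquad Q(t)=\widetilde{\mathcal{C}}^{\alpha}_{2N-1}(-t), \qquad \alpha:=-\lambda-\tfrac{n}{2}+1 .
$$
First I would note that $\tilde{F}_{2N}\cdot s_\lambda$ really is a spinor-valued polynomial of degree $2N$ in $\xi_1,\ldots,\xi_n$: by the defining relation \eqref{eqn:Ctilde} the expression $\xi_n^{\ell}\,\widetilde{\mathcal{C}}^{\alpha}_\ell(|\xi'|^2/\xi_n^2)$ is polynomial of degree $\ell$ in $\xi_n,|\xi'|^2$, so the first summand has degree $2N$, while the second has degree $(2N-2)+2=2N$ because $\underline{\xi'}\,\underline{\xi_n}$ contributes degree $2$ in $\xi$.

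By Lemma \ref{spinorhypergeom} the membership in $\mathrm{Sol}(\mathfrak{g},\mathfrak{g}';\mathbb{S}_\lambda)$ is equivalent to the four equations \eqref{5.7}--\eqref{5.10}, and the first two are implied by the last two; therefore it suffices to verify \eqref{5.9} and \eqref{5.10} for the polynomials $P,Q$ above. Equation \eqref{5.10} is a first-order coupling between $\widetilde{\mathcal{C}}^{\alpha}_{2N}$ and $\widetilde{\mathcal{C}}^{\alpha}_{2N-1}$ (polynomials of adjacent degree but the same upper index), and \eqref{5.9} is a complementary one involving $tP'-tQ'$. I would obtain both from the two classical contiguity relations
$$
 \tfrac{d}{dx} C^{\alpha}_\ell(x) = 2\alpha\, C^{\alpha+1}_{\ell-1}(x), \qquad \ell\, C^{\alpha}_\ell(x) = 2(\ell+\alpha-1)x\, C^{\alpha}_{\ell-1}(x) - (\ell+2\alpha-2)\, C^{\alpha}_{\ell-2}(x) ,
$$
applied with $\ell=2N$ and $\ell=2N-1$: inverting $x\mapsto x^{-1}$ and setting $s=x^2$, these translate via \eqref{eqn:Ctilde} into first-order relations between $\widetilde{\mathcal{C}}^{\alpha}_{2N}$, $\widetilde{\mathcal{C}}^{\alpha}_{2N-1}$, and their derivatives in $s$; after substituting $s=-t$ and clearing normalizations, the resulting relations are exactly \eqref{5.10} and \eqref{5.9}.

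The main obstacle is mere bookkeeping: the sign flip $s=-t$ produced by $\epsilon_n=-1$, the shift in the Gegenbauer parameter from the scalar value $-\lambda-\tfrac{n-1}{2}$ (Case~1 of the proof of Theorem~\ref{basis}) to the spinor value $\alpha=-\lambda-\tfrac{n}{2}+1$ (caused by the extra shift $+\tfrac12$ in \eqref{eqn:spinactiondual} compared with \eqref{dpti}), and the normalization hidden in $\widetilde{\mathcal{C}}^{\alpha}_\ell$ versus $\mathcal{C}^{\alpha}_\ell$. An alternative that avoids invoking contiguity relations at all is to note that \eqref{5.10} uniquely determines $Q$ from $P$ up to an additive constant killed by degree considerations, and then to check \eqref{5.9} by matching leading coefficients and using the Gegenbauer ODE \eqref{eqn:Rla} that $P$ already satisfies; this reduces the whole lemma to one scalar identity between leading coefficients of $\widetilde{\mathcal{C}}^{\alpha}_{2N}$ and $\widetilde{\mathcal{C}}^{\alpha}_{2N-1}$.
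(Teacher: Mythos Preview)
Your proposal is correct and follows essentially the same route as the paper: reduce to Lemma~\ref{spinorhypergeom}, identify $P(t)=\widetilde{\mathcal C}^{\alpha}_{2N}(-t)$ and $Q(t)=\widetilde{\mathcal C}^{\alpha}_{2N-1}(-t)$ with $\alpha=-\lambda-\tfrac{n}{2}+1$, and verify the coupled first-order equations \eqref{5.9}--\eqref{5.10} via standard Gegenbauer identities. One small remark on your choice of identities: the three-term recurrence you quote keeps the upper index $\alpha$ fixed, whereas the derivative formula \eqref{der_ap} sends $\alpha\mapsto\alpha+1$; to close the loop you will need one relation linking the two families, and the paper uses precisely such a contiguity relation (namely \eqref{eqn:Gpolyspec}, rewritten as \eqref{eqn:C3even} in the renormalized form) rather than the same-$\alpha$ recurrence. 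This is only a bookkeeping adjustment, and your alternative strategy (determine $Q$ from $P$ via \eqref{5.10} and then check \eqref{5.9} by comparing leading coefficients) would also work.
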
  
\noindent
\begin{proof}
By Lemma \ref{spinorhypergeom}, it is sufficient to solve the system of
ordinary differential equations \eqref{5.7}--\eqref{5.10} for 
polynomials $P(t)$ and $Q(t)$.

It follows from Lemma \ref{lem:ghGegen} in Appendix that the 
polynomial solutions $P(t)$ and $Q(t)$ are of the form 
\[
P(t)=A\,\widetilde{\mathcal C}^{\alpha}_{2N}(-t),\, 
Q(t)=B\,\widetilde{\mathcal C}^{\alpha}_{2N-1}(-t)\quad \text{with} \quad 
\alpha=-\lambda-\frac{n}{2}+1,
\]
for some $A,B\in{\mathbb C}$. Let us show that 
\eqref{5.9} and \eqref{5.10} are fulfilled for 
this pair ($P(t), Q(t)$) if and only if $A=B$.
We shall deal with \eqref{5.10} below, and omit 
\eqref{5.9} which gives the same conclusion by 
a similar argument using the formula \eqref{eqn:Cidentity}
in Appendix.

Suppose $t=-\frac{1}{x^2}$. We note that if two functions 
$g(x)$ and $h(t)$ are related by the formula
\[
g(x)=x^lh(t)\quad (\equiv x^lh(-\frac{1}{x^2})),
\]
then $\frac{dx}{dt}=\frac{1}{2}x^3$ and thus 
\begin{eqnarray}  
h'(t)=\frac{1}{2}(x^{-l+3}g'(x)-lx^2h(t)).  \label{eqn:ghdiff}
\end{eqnarray} 
Applying \eqref{eqn:ghdiff} to 
\[
g_P(x):=x^{2N}P(t) \quad\text{and}\quad g_Q(x):=x^{2N-1}Q(t),
\]
we get 
\begin{align}
& 2P'(t)  \quad\quad\quad\quad\quad\quad\quad\,\,\, = x^{-2N+3}g_P'(x)-2Nx^2P(t),\\
& -(2N-1)Q(t)+2tQ'(t) = -x^{-2N+2}g_Q'(x). 
\end{align}
Since $g_P(x)=A\widetilde{C}^{\alpha}_{2N}(x)$ and 
$g_Q(x)=B\widetilde{C}^{\alpha}_{2N-1}(x)$ with 
$\alpha=-\lambda-\frac{n}{2}$ (see \eqref{eqn:Ctilde} or Lemma
\ref{lem:ghGegen}), \eqref{5.10} amounts to 
\[
A((\alpha+N)x\,\widetilde{ C}^{\alpha+1}_{2N-1}(x)
-N\widetilde{ C}^{\alpha}_{2N}(x))
-B\widetilde{ C}^{\alpha+1}_{2N-2}(x)=0
\]
by \eqref{der_ap}. Using the identity
\eqref{eqn:C3even}, we see that this holds if and only if
$A=B$. Thus the proof of Lemma \ref{defPQ} is completed.  

\end{proof}
 
The case of odd homogeneity is contained in the next two lemmas,
 for which the proof is similar and omitted.
\begin{lemma}
\label{spinorhypergeomodd}
Let $N \in {\mathbb{N}}$, $\lambda\in\mC$, and $s_\lambda\in\mS_\lambda$ a non-zero
vector. 
Then 
\[
{F}_{2N+1}\cdot s_\lambda=\xi_n^{2N}(P(t)\underline{\xi'}+Q(t)\underline{\xi_n})\cdot s_\lambda
\]
is annihilated
 by $d \tilde \pi_{S,\lambda} (E_j)$
 $(1 \le j \le n-1)$
if and only if
the polynomials
 $P(t)$ and $Q(t)$ satisfy
 the following system of ordinary differential equations:
\begin{align}
& R(2N, -\lambda-\frac{n}{2}+1)P=0,	\label{5.14}\\
& R(2N+1, -\lambda-\frac{n}{2}+1)Q=0, \label{5.15}\\
& (4N-2\lambda-n+2)\,P-2\,t\,P'-(2N+1)\,Q+2\,tQ'=0,\\
& 2NP-2\, tP'-2\,Q'=0.
\end{align}
Furthermore,
 the first two equations follow from the last two equations.
\end{lemma}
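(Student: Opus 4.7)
The plan is to follow the same strategy as in Lemma \ref{spinorhypergeom}, now with the odd-degree ansatz. First I would split
$$
F_{2N+1} = v_1 + v_2, \quad v_1 := \xi_n^{2N} P(t)\,\uxi', \quad v_2 := \xi_n^{2N} Q(t)\,\underline{\xi_n},
$$
and compute each of the three ingredients entering $d\tilde{\pi}_{S,\lambda}(E_j)$ in \eqref{eqn:spinactiondual}, namely $\square f = (\square'-\partial_{\xi_n}^2)f$, $(\lambda-E-\tfrac12)\partial_{\xi_j}f$, and $\tfrac12\epsilon_j e_j D f$, applied separately to $v_1$ and $v_2$. The derivatives in $t$ are produced exactly as in the even case through $\partial_{\xi_j}t = 2\epsilon_n\epsilon_j\xi_j\xi_n^{-2}$ and $\partial_{\xi_n}t = -2\epsilon_n|\xi'|^2\xi_n^{-3}$, while the Clifford pieces $e_j D$ are handled using $D = D' + e_n\partial_{\xi_n}$, together with the identities $(\uxi')^2=-|\xi'|^2$, $\underline{\xi_n}^2 = \xi_n^2$ (since $\epsilon_n=-1$), and $\uxi'\underline{\xi_n} + \underline{\xi_n}\uxi'=0$.

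Next I would expand $d\tilde{\pi}_{S,\lambda}(E_j)(F_{2N+1}\cdot s_\lambda)$ and collect the contributions along the four basis monomials that now appear in the odd-degree case, namely
$$
\epsilon_j\epsilon_n \xi_j \xi_n^{2N-2}\uxi',\quad \epsilon_j\epsilon_n \xi_j \xi_n^{2N-2}\underline{\xi_n},\quad \epsilon_j e_j \epsilon_n \xi_n^{2N},\quad \epsilon_j e_j \epsilon_n \xi_n^{2N-2}\uxi'\underline{\xi_n}.
$$
Setting each coefficient to zero yields four equations for the polynomials $P(t)$, $Q(t)$. Two of them match the Gegenbauer-type equations $R(2N,-\lambda-\tfrac{n}{2}+1)P=0$ and $R(2N+1,-\lambda-\tfrac{n}{2}+1)Q=0$ via exactly the same manipulation as in the proof of Lemma \ref{spinorhypergeom} (the terms proportional to $\xi_j$ are the scalar-type terms that reproduce the operator $R$ in \eqref{eqn:Rla}). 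The remaining two, from the $e_j$-channels, simplify, after dividing out the common factor $\tfrac12\epsilon_j e_j\epsilon_n\xi_n^{2N-2}$ and using $\sum_k \partial_{\xi_k}(P\uxi') = 2\epsilon_n t P'\cdot \text{(correction)} + n' P$ type identities for the Dirac operator applied to the two monomials, to the two first-order relations displayed in the lemma.

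For the last assertion, I would verify that differentiating the third equation in $t$, using $\frac{d}{dt}[2tP'] = 2P' + 2tP''$ and similarly for $Q$, and then combining linearly with the fourth equation, reproduces both \eqref{5.14} and \eqref{5.15}. This is the same mechanism exhibited at the end of the proof of Lemma \ref{spinorhypergeom}.

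The main obstacle will not be conceptual but bookkeeping: one must carefully track the Clifford-algebra signs arising from $D'\uxi' = -\sum_k \epsilon_k + $ mixed terms, from the anticommutator of $\uxi'$ and $\underline{\xi_n}$, and from $\partial_{\xi_n}$ hitting $\underline{\xi_n}=\epsilon_n e_n\xi_n$, in order to group the expression along the correct four independent Clifford monomials. Because the formal structure of the $d\tilde{\pi}_{S,\lambda}(E_j)$-action is identical in the even and odd cases, once the Clifford pairing of $D$ with the basis vectors $\uxi'$ and $\underline{\xi_n}$ is sorted out correctly, the derivation of the two first-order equations and the derivation of the two $R$-equations proceed in complete parallel to the proof of Lemma \ref{spinorhypergeom}.
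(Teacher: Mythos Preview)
Your proposal is correct and follows precisely the approach intended by the paper, which omits the proof of this lemma with the remark that it is ``similar'' to that of Lemma~\ref{spinorhypergeom}. Your choice of the four Clifford basis monomials for the odd-degree ansatz and the mechanism for deriving the two second-order $R$-equations from the two first-order relations are exactly the parallel computations the paper has in mind.
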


\begin{lemma}\label{lem:svodd} 
Let $N\in\mN$, $\lambda\in\mC$, and $s_\lambda\in\mS_\lambda$ a non-zero
vector. We set
\begin{eqnarray}
\tilde{F}_{2N+1}(\xi_1,\cdots ,\xi_n) 
= \xi_n^{2N}\left(\widetilde{\mathcal C}^{-\lambda-\frac{n}{2}+1}_{2N}\left(\frac{|\xi'|^2}{\xi_n^2}\right)\underline{\xi'}
+(-\lambda-\frac{n}{2}+N+1)\widetilde{\mathcal C}^{-\lambda-\frac{n}{2}+1}_{2N+1}\left(\frac{|\xi'|^2}{\xi_n^2}\right)
\underline{\xi_n}\right). \nonumber \\
\label{oddhomvectors}
\end{eqnarray}  
Then $\tilde{F}_{2N+1}\cdot s_\lambda$ is of homogeneous degree $2N+1$
 and is annihilated
 by $d \tilde \pi_{\lambda}(E_j) (1 \le j \le n-1)$, and therefore belongs to ${\mathrm{Sol}}(\gog,\gog';\mS_\lambda)$
for any $s_\lambda\in\mS_\lambda$.
\end{lemma}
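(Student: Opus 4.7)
The plan is to mimic closely the proof of Lemma \ref{defPQ}, reducing the spinor annihilation condition to a system of ordinary differential equations and then identifying the solutions explicitly via Gegenbauer polynomials. By Lemma \ref{spinorhypergeomodd}, $\tilde F_{2N+1}\cdot s_\lambda$ belongs to $\mathrm{Sol}(\gog,\gog';\mS_\lambda)$ if and only if the pair of polynomials
\[
P(t) = \widetilde{\mathcal C}^{\alpha}_{2N}(-t), \qquad Q(t) = (\alpha+N)\,\widetilde{\mathcal C}^{\alpha}_{2N+1}(-t),
\]
with $\alpha = -\lambda - \tfrac{n}{2}+1$, satisfies the four equations \eqref{5.14}--\eqref{5.15} together with the two first-order relations. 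So the task is purely to verify these ODE's.

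First I would dispense with the two second-order equations. As noted at the end of Lemma \ref{spinorhypergeomodd}, the second-order equations \eqref{5.14} and \eqref{5.15} are consequences of the first-order ones, so they may be ignored; alternatively, they follow directly from the definition of $\widetilde{\mathcal C}^\alpha_k$ via the relation \eqref{eqn:Ctilde} together with Lemma \ref{lem:ghGegen}, since $\widetilde{\mathcal C}^\alpha_k(-t)$ is precisely the polynomial killed by $R(k,\alpha)$. So the nontrivial content lies in the first-order pair.

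Next I would substitute $P$ and $Q$ into the two first-order equations using the variable change $t=-1/x^2$, so that $g_P(x)=x^{2N}P(t)$ and $g_Q(x)=x^{2N+1}Q(t)$ become (up to the constant $\alpha+N$) the renormalized Gegenbauer polynomials $\widetilde C^{\alpha}_{2N}(x)$ and $\widetilde C^{\alpha}_{2N+1}(x)$. Using the relation \eqref{eqn:ghdiff} already employed in the proof of Lemma \ref{defPQ} to convert derivatives in $t$ into derivatives in $x$, together with the derivative identity $\tfrac{d}{dx}\widetilde C^{\alpha}_k(x) = (\alpha+[\tfrac{k+1}{2}])\widetilde C^{\alpha+1}_{k-1}(x)$ from \eqref{der_ap}, the two first-order equations collapse to a single linear identity among $\widetilde C^{\alpha}_{2N}$, $\widetilde C^{\alpha}_{2N+1}$, $\widetilde C^{\alpha+1}_{2N-1}$ and $\widetilde C^{\alpha+1}_{2N}$, with coefficients depending linearly on the normalization constant $B/A$. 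A classical three-term identity on Gegenbauer polynomials (analogous to \eqref{eqn:C3even} used in the even case, and corresponding to the odd-parity companion formula) then forces $B/A = \alpha + N$, which is exactly the factor appearing in \eqref{oddhomvectors}.

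The only nontrivial step, and the place where one should be careful, is keeping track of the precise Gegenbauer identity that forces this ratio: the parity of indices shifts by one relative to the even case, so one must use the odd-index analogue of the three-term recurrence. Once that identity is in hand, both first-order equations are satisfied simultaneously by the same choice $B/A=\alpha+N$, and the conclusion follows. Finally, that $\tilde F_{2N+1}$ is a genuine homogeneous polynomial of degree $2N+1$ in $\xi_1,\ldots,\xi_n$ is immediate from \eqref{eqn:Ctilde}: the substitution $\widetilde{\mathcal C}^\alpha_k(|\xi'|^2/\xi_n^2)$ produces only even powers of $\xi_n^{-1}$, which are absorbed by the prefactor $\xi_n^{2N}$ together with the factors $\underline{\xi'}$ and $\underline{\xi_n}$. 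This completes the proof.
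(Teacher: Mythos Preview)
Your proposal is correct and follows exactly the approach the paper intends: the paper itself omits the proof of Lemma~\ref{lem:svodd}, stating only that it is ``similar'' to that of Lemma~\ref{defPQ}, and your outline carries out precisely that parallel argument (reduce via Lemma~\ref{spinorhypergeomodd} to the first-order pair, pass to the variable $x$ via \eqref{eqn:ghdiff}, and use the odd-parity three-term identity \eqref{eqn:C3odd} together with \eqref{eqn:Cidentity} to force $B/A=\alpha+N$). One minor correction: the derivative formulas for $\widetilde C^\alpha_k$ split by parity as $\tfrac{d}{dx}\widetilde C^\alpha_{2N}=2(\alpha+N)\widetilde C^{\alpha+1}_{2N-1}$ and $\tfrac{d}{dx}\widetilde C^\alpha_{2N+1}=2\,\widetilde C^{\alpha+1}_{2N}$ rather than the unified form you wrote, but this does not affect your argument.
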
   
\begin{remark}
In contrast to the even case in Lemma \ref{defPQ}, the 
coefficient $-\lambda-\frac{n}{2}+N+1 \, (=\alpha+N)$ shows 
up in the odd case in Lemma \ref{lem:svodd} with respect to 
the renormalized Gegenbauer polynomials. We note
\[
{\mathcal C}^{\alpha}_{2N}(-t)\underline{\xi'}+{\mathcal C}^{\alpha}_{2N+1}(-t)\underline{\xi_n}=
(\alpha)_N(\widetilde{\mathcal C}^{\alpha}_{2N}(-t)\underline{\xi'}
+(\alpha+N)\widetilde{\mathcal C}^{\alpha}_{2N+1}(-t)\underline{\xi_n}).
\] 
\end{remark}
As in Section 4, the homomorphisms of generalized Verma modules defined by the singular vectors described  
above induce equivariant differential operators acting on local sections of induced homogeneous vector bundles 
on the generalized flag manifolds. We describe these differential operators in the non-compact picture
of the induced representations. 

In the following theorem, we retain the notation of 
Section \ref{4.1}: $\mS^n\equiv\mS^{p-1,q-1}$ is the
spin representation of $Spin(p-1,q-1)$, and 
$\mS_\pm^n\equiv\mS_\pm^{p-1,q-1}$ are the half-spin 
representations when $n=p+q-2$ is even. They 
are extended to the representations 
$\mS_\lambda^n\equiv\mS_\lambda^{p-1,q-1}$,
$\mS_{\pm,\lambda}^n\equiv\mS_{\pm,\lambda}^{p-1,q-1}$,
respectively, of the parabolic subgroup $\widetilde P=\widetilde MAN_+$
with $\widetilde M\simeq Spin(p-1,q-1)$ by letting $A$
act as the one-dimensional representation ${\mathbb C}_\lambda$
and $N_+$ trivially. Then the branching law of the restriction 
with respect to the pair of the parabolic subgroups 
$\widetilde P\supset {\widetilde P}'$ of $Spin_o(p,q)\supset Spin_o(p,q-1)$
is given as 
\begin{eqnarray}
 & & \mS_{\lambda}^n \simeq \mS_{+,\lambda}^{n-1}\oplus \mS_{-,\lambda}^{n-1},  \quad  n: \text{odd},
\nonumber \\
 & & \mS_{\pm,\lambda}^n  \simeq \mS_{\lambda}^{n-1},  \quad\quad\quad\,\,\,  n: \text{even}.
\end{eqnarray}
We recall from \eqref{acoef} and \eqref{bcoef} the polynomials
$a_j(\lambda)\equiv a^{N,n}_j(\lambda)$ and 
$b_j(\lambda)\equiv b^{N,n}_j(\lambda)$. 
Then we have
\begin{eqnarray}
& & N!\,\widetilde{\mathcal C}^{-\lambda-\frac{n}{2}+1}_{2N}(-t)=
\sum\limits_{j=0}^{N}a_j(\lambda-\frac{1}{2})t^j, \nonumber 
\\
& & N!\,\widetilde{\mathcal C}^{-\lambda-\frac{n}{2}+1}_{2N-1}(-t)=
2N\sum\limits_{j=0}^{N-1}b_j(\lambda-\frac{1}{2})t^j, \nonumber 
\\
& & N!\,\widetilde{\mathcal C}^{-\lambda-\frac{n}{2}+1}_{2N+1}(-t)
      =
      2\,\sum\limits_{j=0}^{N}b_j(\lambda-\frac{1}{2})t^j.\nonumber
\end{eqnarray}
Therefore, by applying Theorem \ref{folkloreg}  to Lemmas \ref{defPQ} and \ref{lem:svodd}
with $\lambda$ replaced by $-\lambda$, we obtain the following theorem:

\begin{theorem}\label{thm:5.6}
Let $({\widetilde G}, {\widetilde G}')=(Spin_o(p,q), Spin_o(p,q-1))$ and $\lambda\in{\mathbb C}$. We decompose
the Dirac operator $D$ on $\mR^{p-1,q-1}=\mR^{p-1,q-2}\oplus\mR^{0,1}$ as 
\[
D=D'+\underline{\partial}_n\equiv \sum_{i=1}^{n-1}e_i\frac{\partial}{\partial x_i}
+e_n\frac{\partial}{\partial x_n},
\]
and write the Laplace--Beltrami operator on $\mR^{p-1,q-2}$ as
\[
\square'=-(D')^2=\sum_{i=1}^{n-1}\epsilon_i\frac{\partial^2}{\partial x_i^2}.
\]
We introduce a family of $\mathrm{End}(\mS^n)$-valued differential operators $D_K^{\mS}(\lambda)$
of order $K$ $(K\in\mN)$ by 
\begin{align*}
 D_{2N}^{\mS}(\lambda)&:=
  \sum\limits_{j=0}^N {a}_j(-\la -\frac{1}{2})(\square')^j\frac{\partial^{2N-2j}}{\partial x_n^{2N-2j}}
                  +2N\,\sum\limits_{j=0}^{N-1} {b}_j(-\la -\frac{1}{2})(\square')^j
									\frac{\partial^{2N-2j-2}}{\partial x_n^{2N-2j-2}}{D}'\underline{\pa}_n,
\\
 D_{2N+1}^{\mS}(\lambda)&:=
                  \,\sum\limits_{j=0}^N {{ a}}_j(-\la-\frac{1}{2})(\square')^j
									 \frac{\partial^{2N-2j}}{\partial x_n^{2N-2j}}{D}'
                  +(-2\lambda-n+2N+2)\sum\limits_{j=0}^{N} {{ b}}_j(-\la-\frac{1}{2})(\square')^j
									\frac{\partial^{2N-2j}}{\partial x_n^{2N-2j}} \underline{\pa}_n.
%
%
\end{align*}
\begin{enumerate}
\item[(1)]
The differential operators $D_K^\mS(\lambda)$ $(K\in\mN)$ induce ${\widetilde G}'$-homomorphisms 
\begin{align*}
& \mathrm{Ind}^{\widetilde G}_{\widetilde{P}}(\mS^n_\lambda)\to\bigoplus_{\epsilon=\pm} 
\mathrm{Ind}^{{\widetilde G}'}_{{\widetilde P}'}(\mS^{n-1}_{\epsilon, \lambda+K}) \quad \text{for $n$ odd},
\\
 & \mathrm{Ind}^{\widetilde G}_{\widetilde{P}}(\mS^n_{\pm, \lambda})\to
\mathrm{Ind}^{{\widetilde G}'}_{{\widetilde P}'}(\mS^{n-1}_{\lambda+K})  \quad\quad\,\, \text{for $n$ even},
\end{align*}
by the following formulas
\[
\mathscr{D}_K^\mS(\lambda)f:=(D_K^\mS(\lambda)f)|_{x_n=0}
\]
in the non-compact picture
\begin{align*}
& \mathscr{D}_K^\mS(\lambda): 
\lC^\infty(\mR^{p-1,q-1}, \mS^{n})\to \lC^\infty(\mR^{p-1,q-2}, \mS^{n-1}_{+}\oplus\mS^{n-1}_{-})
& & \text{for $n$ odd},
\\
& \mathscr{D}_K^\mS(\lambda): 
\lC^\infty(\mR^{p-1,q-1}, \mS^{n}_\pm)\to \lC^\infty(\mR^{p-1,q-2}, \mS^{n-1})
 & & \text{for $n$ even},
\end{align*}
In particular, the following infinitesimal relations are satisfied in both cases:
\begin{eqnarray}
\mathscr{D}_K^\mS(\lambda)d\pi^{\widetilde G}_{\mS, \lambda}(X)
=d\pi^{{\widetilde G}'}_{\mS, \lambda+K}(X)\mathscr{D}_K^\mS(\lambda)
\quad
\text{ for all $X\in\gog'$}.  
\end{eqnarray} 
\item[(2)]
Conversely, if $\lambda\in{\mathbb C}$ satisfies
\[
-\lambda+n-\frac{3}{2}\notin\mN_+\quad \text{and}\quad -2\lambda+n-1\notin\mN_+,
\]
and if there exist an irreducible finite-dimensional representation $W$ of
$\widetilde{P'}$ and a non-trivial differential ${\widetilde G}'$-homomorphism $T$, then
\begin{align*}
&
W\simeq
   \begin{cases}  \mS^{n-1}_{\epsilon, \lambda+K}
                       &\text{($n$ odd)},
                       \\
                   \mS^{n-1}_{\lambda+K}    
                       &\text{($n$ even)},
 \end{cases}
\end{align*}
for some $K\in\mN$ and $\epsilon=\pm$ ($n$ odd case) and $T$ is given by 
a scalar multiple of $\mathscr{D}^\mS_K(\lambda)$ in the non-compact picture.
\end{enumerate}
\end{theorem}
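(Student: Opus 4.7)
The plan for Part (1) is direct: take the singular vectors $\tilde F_{2N}\cdot s_{-\lambda}$ and $\tilde F_{2N+1}\cdot s_{-\lambda}$ produced in Lemmas \ref{defPQ} and \ref{lem:svodd} (with $\lambda$ replaced by $-\lambda$), then invoke Theorem \ref{folkloreg} to convert each one into a $\tilde G'$-equivariant differential operator $\mathscr{D}_K^{\mS}(\lambda)$ acting between the appropriate induced representations. To get the explicit formula, I would expand $\widetilde{\mathcal C}^{-\lambda-\frac{n}{2}+1}_{2N}$, $\widetilde{\mathcal C}^{-\lambda-\frac{n}{2}+1}_{2N-1}$, and $\widetilde{\mathcal C}^{-\lambda-\frac{n}{2}+1}_{2N+1}$ in powers of $t=|\xi'|^2/\xi_n^2$ and match coefficients with the polynomials $a_j,b_j$ from \eqref{acoef}--\eqref{bcoef}, using exactly the three Gegenbauer identities displayed just above the theorem. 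Under the inverse Fourier transform $\xi_j\mapsto -i\partial_{x_j}$, $e_n\underline\xi_n\mapsto -\partial_{x_n}^2 \cdot(\text{sign})$, etc., the Clifford factors $\uxi'$ and $\underline{\xi_n}$ translate into $D'$ and $\underline\partial_n$, which gives precisely the stated $\mathrm{End}(\mS^n)$-valued operators. The target bundle is determined by the decomposition of $\mS^n$ under the Levi $\tilde L'\cong Spin(p-1,q-2)\cdot A$, which splits according to the parity of $n$ exactly as stated.

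For Part (2) the strategy is to run the F-method backwards. By Theorem \ref{folkloreg} the classification of $\tilde G'$-equivariant differential operators out of $\mathrm{Ind}^{\tilde G}_{\tilde P}(\mS^n_\lambda)$ is equivalent to the classification, as an $\tilde L'$-module, of the space $\mathrm{Sol}(\tilde\gog,\tilde\gog';\mS^n_{-\lambda}{}^\ch)$ of polynomial solutions annihilated by the operators $d\tilde\pi_{\mS,-\lambda}(E_j)$, $1\le j\le n-1$, from Lemma \ref{lem:5.1}. The first key step is to show that, under the two genericity hypotheses on $\lambda$, every homogeneous element of $\mathrm{Sol}$ is $Spin_o(p-1,q-2)$-invariant. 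This proceeds exactly as in the proof of Theorem \ref{basis}(2): form the commutator $\epsilon_i\xi_i\,d\tilde\pi_{\mS,-\lambda}(E_j)-\epsilon_j\xi_j\,d\tilde\pi_{\mS,-\lambda}(E_i)$ on a solution; in the scalar case the purely second-order terms cancel and one obtains a factor $(E-\lambda-1)$ times the infinitesimal rotation $\epsilon_j\xi_j\partial_{\xi_i}-\epsilon_i\xi_i\partial_{\xi_j}$. In the spinorial case there is an additional contribution from the Clifford term $-\tfrac12\epsilon_je_jD$, which produces instead a factor of the form $(E-\lambda-\tfrac12)$ or $(2E-2\lambda-n+1)$ times the $\mathfrak{so}(p-1,q-2)$-action on spinors; the two exclusions $-\lambda+n-\tfrac32\notin\mathbb N_+$ and $-2\lambda+n-1\notin\mathbb N_+$ are precisely what is needed to kill these two families of scalar factors on every homogeneous component.

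Once invariance under $Spin_o(p-1,q-2)$ is established, the second step is structural: any $Spin_o(p-1,q-2)$-invariant, $\mS$-valued homogeneous polynomial of degree $K$ must be of the form \eqref{pol:even} or \eqref{pol:odd}, since $\{\uxi',\underline{\xi_n},\xi_n\}$ generate the ring of such invariants and $(\uxi')^2$, $(\underline{\xi_n})^2$ are scalars. Then Lemmas \ref{spinorhypergeom} and \ref{spinorhypergeomodd} reduce the condition $f\in\mathrm{Sol}$ to a rank-two system of ODEs in $P(t),Q(t)$, and Lemmas \ref{defPQ}/\ref{lem:svodd} show that, up to an overall scalar, this system has a unique polynomial solution of each degree. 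Since the $L'$-module $W^\ch$ appearing in a prospective homomorphism $T$ corresponds via Frobenius reciprocity (the last step in the proof of Theorem \ref{folkloreg}) to a single $\tilde L'$-irreducible in $\mathrm{Sol}$, this forces $W$ to be one of the spin bundles listed and $T$ to be a scalar multiple of $\mathscr{D}^{\mS}_K(\lambda)$. The hardest step of the whole argument is the first step of Part (2): correctly isolating the algebraic condition on $\lambda$ that prevents ``exceptional'' (non--$Spin_o(p-1,q-2)$-invariant) spinorial singular vectors from appearing, and verifying that the Clifford cross-terms $e_jD$ really do collapse to a single scalar on each homogeneity slice under the stated hypotheses.
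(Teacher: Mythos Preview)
Your plan for Part (1) is essentially the paper's own argument: Lemmas \ref{defPQ} and \ref{lem:svodd} supply the singular vectors, the three Gegenbauer identities just above the theorem convert them to the $a_j,b_j$ expansions, and Theorem \ref{folkloreg} (with $\lambda\mapsto -\lambda$) yields the operators. Nothing to add there.

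For Part (2) your route diverges from the paper. You propose to attack $\mathrm{Sol}(\gog,\gog';\mS_{-\lambda})$ directly: form the commutator $\epsilon_i\xi_i\,d\tilde\pi_{\mS,-\lambda}(E_j)-\epsilon_j\xi_j\,d\tilde\pi_{\mS,-\lambda}(E_i)$, argue that it factors as a scalar in $E$ times the full $\mathfrak{spin}(p-1,q-2)$-action, and use the two numerical exclusions on $\lambda$ to force $Spin_o(p-1,q-2)$-invariance of every homogeneous solution, after which Lemmas \ref{spinorhypergeom}--\ref{lem:svodd} finish the job. The paper does \emph{not} carry out this commutator computation in the spinor case. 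Instead it invokes Proposition \ref{prop:Sbranch}: Theorem \ref{groth} gives the identity
\[
M^\gog_\gop(\mS^n_{-\lambda})|_{\gog'}\ \simeq\ \bigoplus_{b\ge 0} M^{\gog'}_{\gop'}(\mS^{n-1}_{(\epsilon,)\,-\lambda-b})
\]
in the Grothendieck group, and then the two hypotheses on $\lambda$ are read off as (i) pairwise distinctness of the $\mathfrak{Z}(\gog')$-infinitesimal characters of the summands and (ii) irreducibility of each summand via \eqref{eqn:anti}. Once the restriction is a multiplicity-free direct sum of irreducible $\gog'$-Verma modules, Theorem \ref{folkloreg} and the nonvanishing of the $\tilde F_K$ from Part (1) pin down $W$ and $T$ uniquely.

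The trade-off: the paper's argument is short and conceptual, and it explains the two exclusions cleanly (one for distinct central characters, one for irreducibility), but it does not describe the extra singular vectors that appear outside the generic range. Your approach, if the commutator step goes through, would give a hands-on description of $\mathrm{Sol}$ at every $\lambda$ and make the role of the exceptional parameters visible at the level of polynomials---closer in spirit to Theorem \ref{basis}(3). The risk is exactly the step you flag as hardest: in the spinor case the Clifford contribution $-\tfrac12\epsilon_j e_jD$ does not cancel between the two terms of the commutator, and you must check that what survives combines with the scalar rotation $\epsilon_j\xi_j\partial_{\xi_i}-\epsilon_i\xi_i\partial_{\xi_j}$ into the \emph{total} $\mathfrak{so}(p-1,q-2)$-action on $\mathrm{Pol}[\xi]\otimes\mS$, with a single scalar prefactor in $E$. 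Your guessed form of that prefactor and its link to the two numerical conditions is plausible but not yet established; if you pursue this route, that is the computation to write out in full.
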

\begin{remark}\label{rem:5.7}
For $n=p+q-2$ odd, the infinitesimal action ${d}\pi^{\widetilde G}_{\mS,\lambda}$
of the Lie algebra $\gog$ on the non-compact picture decomposes into a direct 
sum of two $\gog'$-modules:
\[
\lC^\infty(\mR^{p-1,q-1}, \mS^{n})\simeq \lC^\infty(\mR^{p-1,q-1}, \mS^{n-1}_{+})\oplus
\lC^\infty(\mR^{p-1,q-1}, \mS^{n-1}_{-}).
\]
If $f\in \lC^\infty(\mR^{p-1,q-1}, \mS^{n-1}_\epsilon)$ $(\epsilon=\pm)$, then 
$\mathscr{D}_K^\mS(\lambda)f\in \lC^\infty(\mR^{p-1,q-2}, \mS^{n-1}_\delta)$
where $\delta=(-1)^K\epsilon$.
\end{remark}
\begin{proof}[Proof of Theorem \ref{thm:5.6}.] 
\begin{enumerate}
\item[(1)]
It follows from 
Lemmas \ref{defPQ} and \ref{lem:svodd} that 
\[
\widetilde{F}_K\cdot s_\lambda \in \mathrm{Sol}(\gog, \gog'; \mS^n_\lambda)
\quad \text{for all $K\in\mN$ and $s_\lambda\in\mS^n_\lambda$}.
\]
Furthermore, since $\widetilde{F}_K$ is an $\gom'$-invariant element in
$\mathrm{Pol}[\gon_+]\otimes\mathrm{End}(\mS^n_\lambda)$, the subspace 
$\widetilde{F}_K(\mS^n_\lambda)$ (or $\widetilde{F}_K(\mS^n_{\pm,\lambda})$
for $n$ odd) belongs to the same $\gom'$-isotypic component, namely,
\[
\mS^n|_{\gom'}\simeq \mS^{n-1}_+\oplus \mS^{n-1}_-\, (n:\text{even}), 
\quad\text{or}\quad \mS^n_\pm|_{\gom'}\simeq \mS^{n-1}\, (n:\text{odd}). 
\]
Now the F-method in Section \ref{sec:2 correction} (with $\lambda$
replaced by $-\lambda$) leads us to Theorem \ref{thm:5.6} 
as in the scalar case (Theorem \ref{basis}).
\item[(2)]
The second statement follows from Theorem \ref{folkloreg} and Proposition
\ref{prop:Sbranch} below.
\end{enumerate}
\end{proof}
\begin{proposition}\label{prop:Sbranch}
(Branching laws $\gog\downarrow\gog'$)
We recall $n=p+q-2$ and $(\gog, \gog')=(\mathfrak{so}(n+2,{\mathbb C}), \mathfrak{so}(n+1,{\mathbb C}))$.
\begin{enumerate}
\item[(1)]
In the Grothendieck group of $\gog'$-modules, we have 
\begin{alignat}{2}
M^\gog_\gop(\mS^n_\lambda)|_{\gog'} &\simeq\bigoplus_{\epsilon=\pm}\bigoplus_{b=0}^\infty 
M^{\gog'}_{\gop'}(\mS^{n-1}_{\epsilon,\lambda -b})
\quad &&\text{for $n$ odd}, \nonumber
\\
M^\gog_\gop(\mS^n_{\pm,\lambda})|_{\gog'} &\simeq\bigoplus_{b=0}^\infty 
M^{\gog'}_{\gop'}(\mS^{n-1}_{\lambda -b})
\quad &&\text{for $n$ even}, 
\notag
\end{alignat}
\item[(2)]
If $\lambda\in{\mathbb C}$ satisfies the following two conditions:
\begin{eqnarray}
& & \lambda+n-\frac{3}{2}\notin\mN_+, \label{eqn:1lmd}
\\ 
& & 2\lambda +n-1\notin\mN_+, \label{eqn:2lmd}
\end{eqnarray}
then the first statement gives an irreducible decomposition 
as $\gog'$-modules.
\end{enumerate}
\end{proposition}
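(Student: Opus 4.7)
The plan is to apply the general branching formula (Theorem \ref{groth}) in the present spinor-valued setting, and then upgrade the resulting identity in the Grothendieck group to a genuine direct-sum decomposition in $\lO^{\gop'}$ by examining $\mathfrak{Z}(\gog')$-infinitesimal characters.

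First I would identify $\gon_-/(\gon_- \cap \gog')$ as an $\gol'$-module. From the coordinates of Section \ref{3.1}, $\gon_-(\mR) \simeq \mR^n$ while $\gon_-'(\mR) \simeq \mR^{n-1}$, so the quotient is one-dimensional; on it $\gom' \simeq \mathfrak{so}(n-1, \mC)$ acts trivially and the split (hyperbolic) direction of $\gol'$ acts with weight $-1$. Hence
\[
S(\gon_-/(\gon_- \cap \gog')) \simeq \bigoplus_{b \in \mN} \mC_{-b}
\]
as $\gol'$-modules. Combining this with the classical branching of spin representations from $Spin(n)$ to $Spin(n-1)$ already recorded in Section \ref{srsection}, namely $\mS^n|_{\gom'} \simeq \mS^{n-1}_+ \oplus \mS^{n-1}_-$ for $n$ odd and $\mS^n_\pm|_{\gom'} \simeq \mS^{n-1}$ for $n$ even, the tensor product $F_\lambda|_{\gol'} \otimes S(\gon_-/(\gon_- \cap \gog'))$ decomposes into a multiplicity-free direct sum of precisely the irreducible $\gol'$-modules appearing on the right-hand side of the claimed formula. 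Substituting into Theorem \ref{groth}, with all multiplicities $m(\lambda, \mu)$ equal to $0$ or $1$, yields the identity in $K(\lO^{\gop})$ asserted in part (1).

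For part (2), I would compute the $\mathfrak{Z}(\gog')$-infinitesimal character of each summand $M^{\gog'}_{\gop'}(\mS^{n-1}_{\epsilon, \lambda-b})$ (respectively $M^{\gog'}_{\gop'}(\mS^{n-1}_{\lambda-b})$ for $n$ even) by adding the half-sum $\rho(\gog')$ of positive roots of $\mathfrak{so}(n+1,\mC)$ to its highest weight. The result takes the form
\[
\Big(\lambda - b + \tfrac{n-1}{2},\ \tfrac{n-2}{2},\ \tfrac{n-4}{2},\ \ldots,\ \ast\Big) \in (\mathfrak{j}')^*,
\]
where the last entry is $\epsilon/2$ when $n$ is odd and $1$ when $n$ is even. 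Once all these vectors are pairwise distinct in $(\mathfrak{j}')^*/W(\gog')$, the identity from part (1) promotes to a genuine direct-sum decomposition of $\gog'$-modules, because modules with distinct generalized infinitesimal characters admit no extensions.

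The heart of the argument, and the main technical hurdle, is the case analysis of when two such vectors can lie in the same $W(\gog')$-orbit. There are two independent sources of coincidence. First, a sign change in the first coordinate identifies the characters indexed by $(b, \epsilon)$ and $(b', -\epsilon)$ precisely when $b + b' = 2\lambda + n - 1$; this family is eliminated exactly by the hypothesis \eqref{eqn:2lmd}. Second, a permutation sending the first coordinate into one of the fixed positions $\tfrac{n-2}{2}, \tfrac{n-4}{2}, \ldots$ (possibly combined with a sign change, and subject to the parity constraint on the number of sign changes in type $D_k$ when $n$ is odd) forces $\lambda + n - \tfrac{3}{2} \in \mN_+$, which is excluded by \eqref{eqn:1lmd}. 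Tracking the Weyl-group constraints carefully and separately in type $B_k$ (when $n$ is even) and in type $D_k$ (when $n$ is odd) then yields the distinctness and completes the proof.
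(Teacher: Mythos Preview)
Your argument for part (1) is correct and follows the paper exactly: apply Theorem \ref{groth} with the branching $\mS^n|_{\gom'}$ of spin modules and the identification $S(\gon_-/(\gon_-\cap\gog'))\simeq\bigoplus_{b\in\mN}\mC_{-b}$ as $\gol'$-modules.

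For part (2), however, there is a genuine gap. The proposition asserts an \emph{irreducible} decomposition, which requires two separate facts: (a) the Grothendieck-group identity lifts to an actual direct sum of $\gog'$-modules, and (b) each summand $M^{\gog'}_{\gop'}(\mS^{n-1}_{(\epsilon,)\lambda-b})$ is itself irreducible. You argue only for (a), via distinctness of $\mathfrak{Z}(\gog')$-infinitesimal characters, and never address (b).

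Furthermore, your attribution of condition \eqref{eqn:1lmd} to a ``second source of Weyl-orbit coincidence'' is mistaken. For summands indexed by $b\ne b'$ the tail $(\tfrac{n}{2}-1,\tfrac{n}{2}-2,\ldots)$ is identical, so the two multisets of absolute values can agree only if $|\lambda-b+\tfrac{n-1}{2}|=|\lambda-b'+\tfrac{n-1}{2}|$; a permutation moving the first coordinate onto a tail entry would double that tail entry in one multiset but not the other, hence cannot produce a coincidence. Thus distinctness is governed by \eqref{eqn:2lmd} alone. In the paper, condition \eqref{eqn:1lmd} enters instead through the irreducibility criterion \eqref{eqn:anti}: one checks that $\langle\mu+\rho(\gog'),\beta^\vee\rangle\notin\mN_+$ for every $\beta\in\Delta^+(\gog')\setminus\Delta(\gol')$, and it is precisely \eqref{eqn:1lmd} (together with \eqref{eqn:2lmd} in the $n$ even case, where short roots appear) that guarantees this for all $b\in\mN$. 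You should replace the ``permutation'' paragraph by this irreducibility check.
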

\begin{proof}
\begin{enumerate}
\item[(1)]
We apply Theorem \ref{groth} with
\begin{align*}
&&&
F_\lambda :=
   \begin{cases}  \mS^n_\lambda\equiv \mS^n\otimes\mC_\lambda
                       &\text{for $n$ odd},
                       \\
                   \mS^n_{\epsilon,\lambda}\equiv \mS_\epsilon^n\otimes\mC_\lambda    
                       &\text{for $n$ even}.
 \end{cases}
\end{align*}
Then we have 
\begin{align*}
&&&
F_\lambda|_{\gol'}\simeq
   \begin{cases}  \mS^{n-1}_{+,\lambda}\oplus \mS^{n-1}_{-,\lambda}
                       &\text{for $n$ odd},
                       \\
                   \mS^{n-1}_\lambda   
                       &\text{for $n$ even}.
 \end{cases}
\end{align*}
Since the symmetric tensor algebra 
$S(\gon_-/\gon_-\cap\gog')\simeq\bigoplus\limits_{b\in\mN}{\mathbb C}_{-b}$
as a module of $\gol'\simeq\mathfrak{so}(n-1,{\mathbb C})\oplus\mathfrak{so}(2,{\mathbb C})$,
we have
\begin{align*}
&&&
F_\lambda|_{\gol'}\otimes S(\gon_-/\gon_-\cap\gog')\simeq
   \begin{cases}  \bigoplus\limits_{\epsilon=\pm}\bigoplus\limits_{b=0}^\infty 
\mS^{n-1}_{\epsilon,\lambda -b}
\quad &\text{for $n$ odd},
                       \\
\bigoplus\limits_{b=0}^\infty 
\mS^{n-1}_{\lambda -b}
\quad &\text{for $n$ even}.                					
 \end{cases}
\end{align*}
Here the first statement follows from Theorem \ref{groth}. 
\item[(2)]
Suppose $n$ is odd. Then the ${\mathfrak Z}(\gog')$-infinitesimal character
of the $\gog'$-module $M^{\gog'}_{\gop'}(\mS^{n-1}_{\epsilon,\lambda-b})$ is
given by
\begin{eqnarray*}
(\lambda-b+\frac{n-1}{2}, \frac{n}{2}-1, \cdots , \frac{5}{2}, \frac{3}{2}, \epsilon\frac{1}{2})
\in{\mathbb C}^{\frac{n+1}{2}} / W(D_{\frac{n+1}{2}}).
\end{eqnarray*}
They are distinct when $b$ runs over $\mN$ and $\epsilon=\pm$ if and only if 
\[
\lambda-b +\frac{n-1}{2}\not= -(\lambda-b' +\frac{n-1}{2})
\]
for all $(b,b')\in\mN^2$ with $b\not=b'$, namely, $\lambda$ satisfies
(\ref{eqn:2lmd}). Furthermore, the $\gog'$-module $M^{\gog'}_{\gop'}(\mS^{n-1}_{\epsilon,\lambda-b})$
is irreducible if 
\[
\lambda-b +n+\frac{3}{2}\notin\mN_+
\] 
by  (\ref{eqn:anti}), and  in particular, if (\ref{eqn:1lmd}) is satisfied.

Therefore, if  both (\ref{eqn:1lmd}) and (\ref{eqn:2lmd}) are fulfilled,
then there is no extension among the irreducible
$\gog'$-modules $M^{\gog'}_{\gop'}(\mS^{n-1}_{\epsilon,\lambda-b})$,
and hence the formula in $(1)$ gives a direct sum of irreducible
$\gog'$-modules.
 
Suppose $n$ is even. Then the ${\mathfrak Z}(\gog')$-infinitesimal character
of the $\gog'$-module $M^{\gog'}_{\gop'}(\mS^{n-1}_{\lambda-b})$ is
\begin{eqnarray*}
(\lambda-b+\frac{n-1}{2}, \frac{n}{2}-1, \cdots , 2, 1)
\in{\mathbb C}^{\frac{n}{2}} / W(B_{\frac{n}{2}}).
\end{eqnarray*}
They are distinct when $b$ runs over $\mN$ if and only if 
(\ref{eqn:2lmd}) is satisfied. Furthermore, the condition (\ref{eqn:anti})
amounts to 
\[
2(\lambda-b +\frac{n-1}{2})\notin\mN_+\quad\text{and}\quad \lambda-b +\frac{n-3}{2}\notin\mN_+
\]
which are satisfied if $\lambda$ fulfills (\ref{eqn:1lmd}) and (\ref{eqn:2lmd}). Thus the second statement 
also follows for $n$ even.
\end{enumerate}
\end{proof}
As in the scalar case,
 we have for $\lambda\in \mN+\frac{1}{2}$ an additional set of singular vectors 
in ${\rm{Sol}}({\mathfrak {g}}, {\mathfrak {g}}';\mS_{\lambda})$.
To see this, we retain the notation of Section \ref{4.1} and define the space of
monogenic spinors of degree ($j\in\mN$) by
\[
{\fam2 M}^j(\mR^{p-1,q-1}, \mS^{n}):=\{s\in\mathrm{Pol}[\xi_1, \cdots , \xi_n]\otimes\mS^n:Ds=0, Es=js \},
\]
where $D=\sum\limits_{k=1}^ne_k\frac{\partial}{\partial\xi_k}$ is the Dirac operator and
$E=\sum\limits_{k=1}^n\xi_k\frac{\partial}{\partial\xi_k}$ is the Euler homogeneity operator.
For $n=p+q-2$ even, we also define for $\epsilon=\pm$ 
\[
{\fam2 M}^j(\mR^{p-1,q-1}, \mS_\epsilon^{n}):=\{s\in\mathrm{Pol}[\xi_1, \cdots , \xi_n]\otimes\mS_\epsilon^n:Ds=0, Es=js \}.
\]
Then ${\fam2 M}^j(\mR^{p-1,q-1}, \mS^{n})$ for $n$ odd (or 
${\fam2 M}^j(\mR^{p-1,q-1}, \mS_\epsilon^{n})$ for $n$ even)
is an irreducible $Spin(p-1,q-1)$-submodule 
of $\mathrm{Pol}[\xi_1, \cdots , \xi_n]\otimes\mS^n$
with highest weight
$(j+\frac{1}{2}, \frac{1}{2}, \cdots , \frac{1}{2})$ for $n$ odd
(or $(j+\frac{1}{2}, \frac{1}{2}, \cdots , \epsilon\frac{1}{2})$ for $n$ 
even), respectively.
\begin{lemma}\label{lem:MkSol}
Suppose $\lambda\in -\frac{1}{2}+\mN$. Then 
\begin{eqnarray}
& & {\fam2 M}^{\lambda+\frac{1}{2}}(\mR^{p-1,q-1}, \mS^{n})\subset \mathrm{Sol}(\gog, \gog; \mS^n_\lambda),
\nonumber \\ \nonumber
& & {\fam2 M}^{\lambda+\frac{1}{2}}(\mR^{p-1,q-1}, \mS_\epsilon^{n})\subset \mathrm{Sol}(\gog, \gog; \mS^n_{\epsilon, \lambda})\quad \text{for $n$ even}.
\end{eqnarray}
\end{lemma}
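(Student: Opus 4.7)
The plan is to verify the inclusion directly using the explicit formula for the action $d\tilde{\pi}_{S,\lambda}$ obtained in Lemma \ref{lem:5.1}. By definition, for a homogeneous $s \in \Pol[\xi_1,\ldots,\xi_n]\otimes\mS^n$, membership in $\mathrm{Sol}(\gog,\gog;\mS^n_\lambda)$ amounts to the vanishing of $d\tilde{\pi}_{S,\lambda}(E_j)s$ for all $j=1,\ldots,n$. Using
\[
d\tilde{\pi}_{S,\lambda}(E_j) = i\left(\tfrac{1}{2}\epsilon_j\xi_j\square + (\lambda-E-\tfrac{1}{2})\partial_{\xi_j} - \tfrac{1}{2}\epsilon_j e_j D\right),
\]
the proof reduces to evaluating each of the three summands on a monogenic spinor.

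First, I would exploit the relation $\square = -D^2$, valid because $\{e_i,e_j\} = -2\epsilon_i\delta_{ij}$ in the Clifford algebra. If $Ds = 0$, this immediately forces $\square s = 0$ and hence $\xi_j\square s = 0$; likewise $e_j Ds = 0$. Therefore the only surviving contribution to $d\tilde{\pi}_{S,\lambda}(E_j)s$ is the middle term $i(\lambda - E - \tfrac{1}{2})\partial_{\xi_j}s$.

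Next, I would invoke homogeneity: if $Es = ks$, then $\partial_{\xi_j} s$ is homogeneous of degree $k-1$, so $E\partial_{\xi_j}s = (k-1)\partial_{\xi_j}s$, and the middle term becomes $i(\lambda - k + \tfrac{1}{2})\partial_{\xi_j}s$. This vanishes for every $j$ and every $s \in \mathcal{M}^k(\mR^{p-1,q-1},\mS^n)$ precisely when $\lambda - k + \tfrac{1}{2} = 0$, that is, when $k = \lambda + \tfrac{1}{2}$. This is exactly the hypothesis $\lambda \in -\tfrac{1}{2}+\mN$ together with the degree condition in the statement of the lemma. Hence $\mathcal{M}^{\lambda+\frac{1}{2}}(\mR^{p-1,q-1},\mS^n) \subset \mathrm{Sol}(\gog,\gog;\mS^n_\lambda)$.

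Finally, for the case $n$ even, the operators $\square$, $D$, $\partial_{\xi_j}$, $E$, and multiplication by $e_j$ all preserve the decomposition $\mS^n = \mS^n_+ \oplus \mS^n_-$ appropriately (note $e_j D$ sends $\mS_\pm$ to $\mS_\pm$ since it has an even number of Clifford factors acting on a spinor-valued operator of odd Clifford degree composed with another odd factor), so the same computation applied to $s \in \mathcal{M}^{\lambda+\frac{1}{2}}(\mR^{p-1,q-1},\mS^n_\epsilon)$ yields the second inclusion. There is no real obstacle here; the only point requiring minor care is tracking the parity/chirality in the half-spinor case to confirm that each of the three summands respects the $\mS^n_\epsilon$ decomposition, so that the vanishing argument applies verbatim within the restricted subspace.
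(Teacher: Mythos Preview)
Your proof is correct and follows essentially the same approach as the paper: both use Lemma \ref{lem:5.1}, the identity $\square=-D^2$, and the observation that $Ds=0$ together with homogeneity $Es=(\lambda+\tfrac{1}{2})s$ annihilate all three summands of $d\tilde{\pi}_{S,\lambda}(E_j)$. The paper first rewrites the operator as $i\big((\lambda-E-\tfrac{1}{2})\partial_{\xi_k}-\tfrac{1}{2}e_k(\mathrm{Id}-\epsilon_k\xi_ke_kD)D\big)$ before applying $Ds=0$, whereas you kill the $\xi_j\square$ and $e_jD$ terms separately, but this is only a cosmetic difference.
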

\begin{proof}
Since $D^2=-\square$, we have 
\[
\xi_k\square - e_kD=-e_k(\mathrm{Id}-\epsilon_k\xi_ke_kD).
\]
Hence, by Lemma \ref{lem:5.1}, we have
\begin{eqnarray} \label{eqn:spinactiondualrewritten}
d\tilde{\pi}_{S,\lambda}(E_k) \nonumber
=i\left((\lambda-E-\frac{1}{2})\partial_{\xi_k}-\frac{1}{2}e_k(\mathrm{Id}-\epsilon_k\xi_ke_kD)D\right)\quad\text{for}\quad k=1,\cdots ,n.
\end{eqnarray}
If $s\in{\fam2 M}^{\lambda+\frac{1}{2}}(\mR^{p-1,q-1}, \mS^{n})$, then 
\begin{align}\nonumber 
 E(\partial_{\xi_k}s)=(\lambda-\frac{1}{2})\partial_{\xi_k}s\quad (1\leq k\leq n),\quad Ds=0.
\end{align}
Therefore, $d\tilde{\pi}_{S,\lambda}(E_k)s=0$ for all $k$ ($1\leq k\leq n$). Thus the lemma is proved.
\end{proof}
We also need the branching laws of these modules when restricted 
from $Spin(p-1,q-1)$ to the subgroup $Spin(p-1,q-2)$, which are given by 
\begin{eqnarray}\label{eqn:MBodd}
& & {\fam2 M}^{j}(\mR^{p-1,q-1}, \mS^{n})\simeq 
\bigoplus_{\epsilon=\pm}\bigoplus_{i=0}^j{\fam2 M}^{i}(\mR^{p-1,q-2}, \mS_\epsilon^{n-1})
\quad \text{for $n$ odd}, \\ \label{eqn:MBeven}
& & {\fam2 M}^{j}(\mR^{p-1,q-1}, \mS_\epsilon^{n})\simeq 
\bigoplus_{i=0}^j{\fam2 M}^{i}(\mR^{p-1,q-2}, \mS^{n-1})
\quad\quad\,\,\,\, \text{for $n$ even}.
\end{eqnarray} 

Let us summarize our results for spinor representation 
in the following theorem analogous to Theorem \ref{basis}.

\begin{theorem}\label{finalspinor}
Let $({\widetilde G}, {\widetilde G}')=(Spin_o(p,q), Spin_o(p,q-1))$, $n=p+q-2$, 
and $\lambda \in {\mathbb{C}}$. 
We recall from 
Lemmas \ref{defPQ} and \ref{lem:svodd} that 
$\widetilde{F}_K\equiv \widetilde{F}^\lambda_K\in\mathrm{Pol}[\xi_1,\dots ,\xi_n]\otimes\lC^{\mathbb C}_n$
($K\in\mN$) is defined by
\begin{eqnarray}
\label{eqn:spinwN}
&  & \tilde{F}_{2N} = 
\xi_n^{2N}\widetilde{\mathcal C}^{\alpha}_{2N}(-t)+
\xi_n^{2N-2}\widetilde{\mathcal C}^{\alpha}_{2N-1}(-t)
\underline{\xi'}\underline{\xi_n},
\nonumber \\
& &  \tilde{F}_{2N+1} = \xi_n^{2N}\left(\widetilde{\mathcal C}^{\alpha}_{2N}(-t)\underline{\xi'}
+(-\lambda-\frac{n}{2}+N+1)\,\widetilde{\mathcal C}^{\alpha}_{2N+1}(-t)\underline{\xi_n}\right),
\end{eqnarray}
where $\alpha=-\lambda-\frac{n}{2}+1$ and $t=\frac{\sum\limits_{j=1}^{n-1}\epsilon_j\xi_j^2}{\epsilon_n\xi_n^2}$. 
Then
\begin{enumerate}
\item[{\rm{(1)}}]
For any $\lambda \in {\mathbb C}$, we have 
\begin{eqnarray}
 & & \mathrm{Sol}(\gog, \gog'; \mS_\lambda)\supset \bigoplus_{K=0}^\infty
\{ \widetilde{F}_K\cdot s_\lambda: \, s_\lambda\in\mS_\lambda\},
\nonumber \\ \nonumber
 & & \mathrm{Sol}(\gog, \gog'; \mS_{\epsilon, \lambda})\supset \bigoplus_{K=0}^\infty
\{\widetilde{F}_K\cdot s_\lambda: \, s_\lambda\in\mS_{(-1)^K\epsilon,\lambda}\}
\quad \text{for $n$ even}.
\end{eqnarray}
Moreover, if $\lambda$ satisfies (\ref{eqn:1lmd}) and (\ref{eqn:2lmd}),
then the above inclusions are the equalities and the inverse Fourier 
transform (see (\ref{eqn:phi})) gives all the singular vectors via the 
isomorphisms as modules of
$\widetilde{L'}\simeq Spin(p-1,q-2)\times\mR$:
\begin{eqnarray}
 & &  M_{{\mathfrak {p}}}^{{\mathfrak {g}}}({\mS_\lambda})
  ^{{\mathfrak {n}}_+'} 
  \overset{\sim} {\underset{\varphi}{\leftarrow}}
 \mbox{\rm Sol}(\mathfrak{g},\mathfrak{g}';\mathbb{S}_\lambda)
 \simeq\bigoplus_{K=0}^{\infty}\{\tilde{F}_K\cdot s_\lambda:\, s_\lambda\in\mS_\lambda\},  
\nonumber \\ \nonumber 
& &  M_{{\mathfrak {p}}}^{{\mathfrak {g}}}({\mS_{\epsilon, \lambda}})
  ^{{\mathfrak {n}}_+'} 
  \overset{\sim} {\underset{\varphi}{\leftarrow}}
 \mbox{\rm Sol}(\mathfrak{g},\mathfrak{g}';\mathbb{S}_{\epsilon, \lambda})
 \simeq\bigoplus_{K=0}^{\infty}\{\tilde{F}_K\cdot s_\lambda:\, s_\lambda\in\mS_{(-1)^K\epsilon, \lambda}\} 
\quad \text{for $n$ even}. 
\end{eqnarray}

\item[{\rm{(2)}}]
 For $\la\in\mN+\frac{1}{2},$ 
 we have ${\mathfrak {l}}'$-injective maps
\begin{eqnarray}
 & &  M_{{\mathfrak {p}}}^{{\mathfrak {g}}}({\mS^n_\lambda})
  ^{{\mathfrak {n}}_+'} 
  {\underset{\varphi}
   {\overset \sim \leftarrow}
   }
 \,\,
  \mbox{\rm Sol}(\mathfrak{g},\mathfrak{g}';\mathbb{\mS}^n_\lambda)
 \quad\supset 
  \bigoplus_{K=0}^{\infty}\{\tilde{F}_K\cdot s_\lambda:\, s_\lambda\in\mS^n_\lambda\}
  \oplus \bigoplus_{\epsilon=\pm}\bigoplus_{i=1}^{\la + \frac 1 2}{\fam2 M}_{i,\epsilon},
	\quad\,\,\,\,\text{for $n$ odd},
	\nonumber\\ \nonumber
	& &  M_{{\mathfrak {p}}}^{{\mathfrak {g}}}({\mS^n_{\epsilon, \lambda}})
  ^{{\mathfrak {n}}_+'} 
  {\underset{\varphi}
   {\overset \sim \leftarrow}
   }
 \,\,
  \mbox{\rm Sol}(\mathfrak{g},\mathfrak{g}';\mathbb{\mS}^n_{\epsilon, \lambda})
 \supset 
  \bigoplus_{K=0}^{\infty}\{\tilde{F}_K\cdot s_\lambda:\, s_\lambda\in\mS^n_{(-1)^K\epsilon, \lambda}\}
  \oplus\bigoplus_{i=1}^{\la + \frac 1 2}{\fam2 M}_{i},
	\quad\text{for $n$ even},
\end{eqnarray}

Here
 ${\fam2 M}_{i,\epsilon}$ and ${\fam2 M}_{i}$ 
are the summands in the branching laws 
(\ref{eqn:MBodd}) and (\ref{eqn:MBeven}), respectively.    
\end{enumerate}
\end{theorem}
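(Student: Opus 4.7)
The inclusions stated in parts (1) and (2) are direct consequences of earlier work in this section. Specifically, Lemmas \ref{defPQ} and \ref{lem:svodd} show that $\widetilde F_K\cdot s_\lambda$ lies in $\mathrm{Sol}(\gog,\gog';\mS_\lambda)$ for every $K\in\mN$ and every $s_\lambda\in\mS_\lambda$, whereas Lemma \ref{lem:MkSol} combined with the $Spin(p-1,q-1)\downarrow Spin(p-1,q-2)$ branching rules \eqref{eqn:MBodd}--\eqref{eqn:MBeven} inserts the additional monogenic summands ${\fam2 M}_{i,\epsilon}$ and ${\fam2 M}_i$ whenever $\lambda\in\mN+\tfrac12$. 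The substantive task is therefore the converse inclusion in part (1) under the generic hypothesis \eqref{eqn:1lmd}--\eqref{eqn:2lmd}, together with the verification that the listed summands in part (2) are internally in direct sum.

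The plan is to follow the template used for the scalar case in Theorem \ref{basis}, adapted to the spinorial setting. The first step is to prove that, when \eqref{eqn:1lmd}--\eqref{eqn:2lmd} hold, any homogeneous element of $\mathrm{Sol}(\gog,\gog';\mS_\lambda)$ of degree $K$ is forced to lie in the $\widetilde{M}'$-invariant subalgebra generated by $\underline{\xi'}$, $\underline{\xi_n}$ and $\xi_n$, i.e.\ to have the shape \eqref{pol:even} or \eqref{pol:odd}. I would establish this by computing the antisymmetrized product $\epsilon_i\xi_i\, d\tilde\pi_{S,\lambda}(E_j)-\epsilon_j\xi_j\, d\tilde\pi_{S,\lambda}(E_i)$ from formula \eqref{eqn:spinactiondual}: the second-order terms in $\square$ cancel, leaving a first-order operator of the form $(E-\lambda+\tfrac12)\bigl(\epsilon_j\xi_j\partial_{\xi_i}-\epsilon_i\xi_i\partial_{\xi_j}\bigr)$ plus a Clifford correction produced by $-\tfrac12\epsilon_j e_j D$. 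A careful bookkeeping shows that, on homogeneous components of degree $K$ with $K-\lambda-\tfrac12\ne 0$, the vanishing of this commutator together with the original system \eqref{eqn:spinactiondual} forces the $\mathfrak{so}(p-1,q-2)$-rotation generators to annihilate $f$, i.e.\ $f$ is $\widetilde{M}'$-invariant. Under this invariance, Lemmas \ref{spinorhypergeom} and \ref{spinorhypergeomodd} reduce the system to the coupled ordinary differential equations that were solved uniquely up to scalar by the renormalized Gegenbauer combinations \eqref{5.13} and \eqref{oddhomvectors}.

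The second step is a character/multiplicity comparison. By construction, each subspace $\widetilde F_K\cdot\mS_\lambda$ carries a single $\widetilde{L}'$-type with $A$-weight $\lambda-K$, whose spin part matches the restriction $\mS^n|_{\gom'}$ as specified by the parities in Remark \ref{rem:5.7}. On the other hand, Proposition \ref{prop:Sbranch}(2) asserts that under \eqref{eqn:1lmd}--\eqref{eqn:2lmd} the restriction $M_\gop^\gog(\mS_\lambda)|_{\gog'}$ is a multiplicity-free direct sum of $\gog'$-Verma modules whose highest weights match exactly these $\widetilde{L}'$-types; via the Frobenius-type isomorphism \eqref{eqn:WMpV} each summand contributes a one-dimensional space of singular vectors. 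This pinches the inclusion into equality in part (1). For part (2), the monogenic spaces ${\fam2 M}^{\lambda+\frac12}$ carry $\widetilde{L}'$-highest weights of the form $(j+\tfrac12,\tfrac12,\dots,\pm\tfrac12)$, whose generalized $\mathfrak{Z}(\gog')$-infinitesimal characters are pairwise distinct and distinct from those of the $\gog'$-modules generated by $\varphi(\widetilde F_K\cdot s_\lambda)$; hence their images land in distinct direct summands of $M_\gop^\gog(\mS_\lambda)^{\gon_+'}$, in perfect analogy with Theorem \ref{basis}(3).

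The main obstacle I foresee is the $\widetilde{M}'$-invariance reduction in the generic case. In the scalar setting of Theorem \ref{basis} the analogous antisymmetrized commutator was a clean multiple of $(E-\lambda-1)$ times a rotation generator; here the Clifford correction $-\tfrac12\epsilon_je_jD$ in \eqref{eqn:spinactiondual} contributes extra first-order terms of the form $\epsilon_i\epsilon_j(\xi_ie_j-\xi_je_i)D$ which do not factor through the same scalar multiplier. One must verify that no new Clifford-covariance types survive, i.e.\ that the only representations of $\widetilde{M}'$ in $\mathrm{Pol}[\gon_+]\otimes\mathrm{End}(\mS^n)$ that admit polynomial solutions of the full system are the four types $\mathrm{id}$, $\underline{\xi'}$, $\underline{\xi_n}$, and $\underline{\xi'}\underline{\xi_n}$ appearing in \eqref{pol:even}--\eqref{pol:odd}. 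Once this rigidity is established, the remainder of the proof is a direct transcription of the scalar argument using the Gegenbauer identities already proved in Lemmas \ref{defPQ} and \ref{lem:svodd}.
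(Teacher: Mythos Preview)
Your overall plan is sound and your second step---the multiplicity comparison via Proposition \ref{prop:Sbranch} and the Frobenius isomorphism \eqref{eqn:WMpV}---is exactly how the paper obtains the equality in part (1). In fact this is the whole argument: once Proposition \ref{prop:Sbranch}(2) gives a multiplicity-free direct sum of irreducible $\gog'$-Verma modules under \eqref{eqn:1lmd}--\eqref{eqn:2lmd}, each summand contributes a single $\widetilde L'$-type to $M_\gop^\gog(\mS_\lambda)^{\gon_+'}$, and Lemmas \ref{defPQ} and \ref{lem:svodd} have already produced a nonzero singular vector of each required type. Your first step (the direct $\widetilde M'$-invariance reduction via the antisymmetrized operators) is therefore unnecessary. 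The paper never carries it out in the spinor case; Lemmas \ref{spinorhypergeom} and \ref{spinorhypergeomodd} \emph{assume} the invariant form \eqref{pol:even}--\eqref{pol:odd} from the outset, and the completeness is handled entirely by the branching-law count. You correctly flagged the Clifford correction $-\tfrac12\epsilon_j e_j D$ as the obstruction to the scalar-style commutator argument; the point is simply that you do not need to overcome it.

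One correction to your treatment of part (2): the claim that the monogenic pieces have $\mathfrak Z(\gog')$-infinitesimal characters \emph{distinct from} those of the $\gog'$-submodules generated by $\varphi(\widetilde F_K\cdot s_\lambda)$ is false. The scalar analogue in Theorem \ref{basis}(3) shows precisely the opposite: $\varphi(H'_j)$ lands inside the submodule $M_j$ generated by $\varphi(w_{\lambda+1-j,\lambda})$, so their infinitesimal characters coincide. The same phenomenon occurs here. The direct sum in part (2) is instead a statement about $\widetilde L'$-modules: the summands $\widetilde F_K\cdot\mS_\lambda$ are (half-)spin types $\mS^{n-1}$ at $A$-weight $\lambda-K$, whereas the ${\fam2 M}_i$ (resp.\ ${\fam2 M}_{i,\epsilon}$) are higher monogenic $\widetilde M'$-types at $A$-weight $-\tfrac12$; these $\widetilde L'$-types are mutually inequivalent, which is what yields the internal direct sum.
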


\vskip 0.8pc
In the second part \cite{KOSS2} of the series, we shall prove the existence of lifts 
of homomorphisms corresponding to singular vectors in generalized Verma 
modules induced from spinor representations to homomorphisms of semi-holonomic
generalized Verma modules covering them. According to the philosophy of
parabolic geometries, \cite{cs}, we get curved versions of our equivariant 
differential operators acting on sections of spinor bundles on manifolds 
with conformal structure.   


\section{Appendix: Gegenbauer polynomials}

 \numberwithin{equation}{section}
 \setcounter{equation}{0}
 
In the Appendix we summarize for reader's convenience a few basic 
conventions and properties of the Gegenbauer polynomials.

The Gegenbauer polynomials are defined in terms of their generating function
\begin{eqnarray}
\frac{1}{(1-2xt+t^{2})^{\alpha}}=\sum_{l=0}^{\infty}C_l^{\alpha}(x) t^{l},
\end{eqnarray}
and satisfy the recurrence relation
\begin{eqnarray}
C_{l}^\alpha(x) = \frac{1}{l}\left(2x(l+\alpha-1)C_{l-1}^\alpha(x) -
(l+2\alpha-2)C_{l-2}^\alpha(x)\right)
\end{eqnarray}
with $C_0^\alpha(x)  = 1,\, C_1^\alpha(x)  = 2 \alpha x$.
The Gegenbauer polynomials are solutions of the Gegenbauer differential equation
\begin{eqnarray}
\label{eqn:Gdiff}
\left((1-x^{2})\frac{d^2}{dx^2}-(2\alpha+1)x\frac{d}{dx}
+l(l+2\alpha)\right) g=0.
\end{eqnarray}
They are given as special values of the Gaussian hypergeometric series
when the series is finite:
\begin{eqnarray}
C_l^{\alpha}(x)=\frac{(2\alpha)_{{l}}}{l!}
\,_2F_1\left(-l,2\alpha+l;\alpha+\frac{1}{2};\frac{1-x}{2}\right)=
\label{eqn:Cpoly}
\sum_{k=0}^{[\frac{l}{2}]}
(-1)^k\frac{\Gamma(l-k+\alpha)}{\Gamma(\alpha)k!(l-2k)!}(2x)^{l-2k}.
\end{eqnarray}
We renormalize the Gegenbauer polynomials by 
\begin{eqnarray}\label{eqn:Gegen2}
\widetilde{C}_l^{\alpha}(x):=\frac{\Gamma(\alpha)}{\Gamma(\alpha+[\frac{l+1}{2}])}
C^\alpha_l(x)=\frac{1}{(\alpha)_{[\frac{l+1}{2}]}}C^\alpha_l(x).
\end{eqnarray}
Then ${\widetilde C}^\alpha_l(x)$ is a non-zero solution to 
\eqref{eqn:Gdiff} for all $\alpha\in{\mathbb C}$ and $l\in{\mathbb N}$.

We observe that $x\mapsto x^lC_l^\alpha(x^{-1})$ is an even
polynomial of degree $2[\frac{l}{2}]$. Therefore there exists 
uniquely a polynomial of degree $[\frac{l}{2}]$, to be denoted
by ${\mathcal C}^\alpha_l(t)$, such that 
\begin{eqnarray}\label{eqn:Cntilde}
{\mathcal C}^\alpha_l(x^2)=x^lC_l^\alpha(x^{-1}).
\end{eqnarray}
It follows from (\ref{eqn:Cpoly}) and (\ref{eqn:Cntilde}) that 
\begin{equation}
\label{eqn:tildeCpoly}
{\mathcal C}_l^{\alpha}(-t)=\sum_{k=0}^{[ \frac{n}{2}]}
\frac{\Gamma(l-k+\alpha)2^{l-2k}}{\Gamma(\alpha)k!(l-2k)!}t^{k}.
\end{equation}
Similarly to the renormalization \eqref{eqn:Gegen2},
we set
\[
\widetilde{\mathcal C}_l^{\alpha}(t)=
\frac{1}{(\alpha)_{[\frac{l+1}{2}]}}{\mathcal C}_l^{\alpha}(t).
\] 
Then
\begin{eqnarray}
& & \widetilde{\mathcal C}^\alpha_0(t)=1, \nonumber
\\
& & \widetilde{\mathcal C}^\alpha_1(t)=2, \nonumber
\\
& & \widetilde{\mathcal C}^\alpha_2(t)=2\, (\alpha+1)-t, \nonumber
\\
& & \widetilde{\mathcal C}^\alpha_3(t)=2\,(\frac{2}{3}(\alpha+2)-t), \nonumber
\\
& & \widetilde{\mathcal C}^\alpha_4(t)=\frac{1}{2}\,(\frac{4}{3}(\alpha+2)(\alpha+3)
-4(\alpha+2)t+t^2). \nonumber
\end{eqnarray}
In particular, for $\alpha=-\lambda-\frac{n}{2}$
\begin{eqnarray}
& & {N!}\, \widetilde{\mathcal C}^\alpha_{2N}(-t) =
\frac{N!}{(\alpha)_N}\,{\mathcal C}^\alpha_{2N}(-t)=
\sum\limits_{j=0}^Na_j(\lambda)t^j,
\label{eqn:Caj}
\\
& & \frac{1}{2}{N!}\, \widetilde{\mathcal C}^\alpha_{2N+1}(-t)
=\frac{N!}{2(\alpha)_{N+1}}\,{\mathcal C}^\alpha_{2N+1}(-t)
=\sum\limits_{j=0}^Nb_j(\lambda)t^j,
\label{eqn:Cbj}
\end{eqnarray}
where the coefficients
$a_j(\lambda)\equiv a^{N,n}_j(\lambda)$ and 
$b_j(\lambda)\equiv b^{N,n}_j(\lambda)$ are defined in 
(\ref{acoef}) and (\ref{bcoef}), respectively.

We recall from \eqref{eqn:Rla} that 
$$
R(l,\alpha):=4t(1+t)\frac{d^2}{dt^2}+((6-4l)t+4(1-\alpha-l))\frac{d}{dt}+l(l-1).
$$
\begin{lemma}\label{lem:ghGegen}
Suppose $l\in{\mathbb N}$ and $g(x)=x^lh(-\frac{1}{x^2})$.
\begin{enumerate}
\item[{\rm{(1)}}]
$h(t)$ satisfies $R(l,\alpha)h(t)=0$ if and only if $g(x)$ 
satisfies the Gegenbauer differential equation \eqref{eqn:Gdiff}.
\item[{\rm{(2)}}]
If $h(t)$ is a polynomial of degree $[\frac{l}{2}]$ and satisfies 
$R(l,\alpha)h(t)=0$, then 
g(x) is a scalar multiple of  the renormalized Gegenbauer polynomial
$\widetilde{C}^\alpha_l(x)$ and $h(t)$
is a scalar multiple of
$\widetilde{\mathcal C}^\alpha_l(-t)$.  
\end{enumerate}
\end{lemma}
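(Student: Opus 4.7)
The plan is to establish part (1) by a direct change-of-variable calculation in the Gegenbauer equation \eqref{eqn:Gdiff}, and to deduce part (2) from part (1) via a uniqueness statement for parity-$l$ polynomial solutions at the ordinary point $x=0$.

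For part (1), I would set $t = -1/x^2$, so that $dt/dx = 2/x^3$, and compute derivatives of $g(x) = x^l h(t)$ by the chain rule:
\begin{align*}
g'(x) &= l x^{l-1} h + 2 x^{l-3} h', \\
g''(x) &= l(l-1) x^{l-2} h + (4l-6) x^{l-4} h' + 4 x^{l-6} h''.
\end{align*}
Substituting into $(1-x^2) g'' - (2\alpha+1) x g' + l(l+2\alpha) g = 0$ and collecting terms by the order of derivative of $h$, one checks that the coefficient of $x^l h$ cancels identically, which encodes the role of the spectral value $l(l+2\alpha)$. What remains involves only $x^{l-2}, x^{l-4}, x^{l-6}$; dividing formally by $x^{l-6}$, substituting $x^2 = -1/t$, and then multiplying by $t^2$ produces exactly
\[
4t(1+t) h'' + \bigl((6-4l)t + 4(1-\alpha-l)\bigr) h' + l(l-1) h = R(l,\alpha) h,
\]
giving the ``if'' direction. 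Since the correspondence $t \leftrightarrow x$ is a bijection on $\{t \ne 0\} \leftrightarrow \{x \ne 0\}$ and both equations are polynomial in the derivatives, the implication is reversible.

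For part (2), let $h(t) = \sum_{k=0}^{[l/2]} c_k t^k$ be annihilated by $R(l,\alpha)$. Then
\[
g(x) = x^l h(-1/x^2) = \sum_{k=0}^{[l/2]} (-1)^k c_k x^{l-2k}
\]
is a polynomial of degree at most $l$ sharing the parity of $l$, and by part (1) it solves the Gegenbauer equation \eqref{eqn:Gdiff}. Since $x=0$ is an ordinary point of that equation, the analytic solution space there is two-dimensional, and because the Gegenbauer operator is invariant under $x \mapsto -x$, it splits as a direct sum of a one-dimensional even subspace and a one-dimensional odd subspace. The renormalized Gegenbauer polynomial $\widetilde{C}_l^\alpha$ is a solution of parity $l$, and a direct inspection of \eqref{eqn:tildeCpoly} shows that the coefficient of $t^{[l/2]}$ in $\widetilde{\mathcal{C}}_l^\alpha(-t)$ equals $1/N!$ when $l = 2N$ and $2/N!$ when $l = 2N+1$; in particular $\widetilde{C}_l^\alpha \not\equiv 0$ for every $\alpha \in \mathbb{C}$. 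Hence the parity-$l$ polynomial solution space is spanned by $\widetilde{C}_l^\alpha$, forcing $g$ to be a scalar multiple of $\widetilde{C}_l^\alpha$. Reversing the change of variable via \eqref{eqn:Cntilde} then shows that $h(t)$ is the same scalar multiple of $\widetilde{\mathcal{C}}_l^\alpha(-t)$.

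The main obstacle is the bookkeeping in part (1): tracking the various negative powers of $x$ and the signs coming from $t = -1/x^2$. Once the cancellation of the $x^l h$ term is verified, the reduction in powers of $x$ makes the substitution $x^2 \leftrightarrow -1/t$ work cleanly, and the remainder of the argument relies only on the standard uniqueness theorem for polynomial solutions of a parity-respecting linear ODE at an ordinary point.
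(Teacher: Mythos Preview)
The paper states Lemma \ref{lem:ghGegen} in the Appendix without proof, so there is no argument to compare against. Your proof is correct: the change-of-variable computation in part (1) checks out (the cancellation of the $x^l h$ coefficient via $-l(l-1)-(2\alpha+1)l+l(l+2\alpha)=0$ is exactly right, and the remaining terms reorganize into $R(l,\alpha)h$ after dividing by $x^{l-6}$ and substituting $x^2=-1/t$), and the parity argument in part (2) is sound since $x=0$ is an ordinary point of the Gegenbauer equation and the operator commutes with $x\mapsto -x$, so the even and odd solution subspaces are each one-dimensional. Your verification that the top coefficient of $\widetilde{\mathcal{C}}_l^\alpha(-t)$ is $1/N!$ (respectively $2/N!$) confirms the nonvanishing of $\widetilde{C}_l^\alpha$ needed to conclude.
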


The Gegenbauer polynomials satisfy the Rodrigues formula
$$
C_l^{\alpha}(x) =
\frac{(-2)^l}{l!}
\frac{\Gamma(l+\alpha)\Gamma(l+2\alpha)}
{\Gamma(\alpha)\Gamma(2l+2\alpha)}(1-x^2)^{-\alpha+1/2}\frac{d^l}{dx^l}
\left[(1-x^2)^{l+\alpha-1/2}\right],
$$
a basic formula for derivative  
\begin{eqnarray}
\label{der_ap}
 \frac{d}{dx}C^{\alpha}_{l}(x)=2\alpha\, C^{\alpha +1}_{l-1}(x)
\end{eqnarray}
and the following identities:
\begin{eqnarray}\label{eqn:Gpolyspec}
& & l C_{l}^{\alpha}(x)-2\alpha\, xC^{\alpha +1}_{l-1}(x)
+2\alpha C^{\alpha +1}_{l -2}(x)=0,
\\
\label{eqn:Cidentity}
& & -2\alpha\, C_{l}^{\alpha+1}(x)+(l+2\alpha)\, C^{\alpha}_{l}(x)
+2\alpha x\, C^{\alpha +1}_{l -1}(x)=0.
\end{eqnarray}

The formulas \eqref{der_ap} and \eqref{eqn:Gpolyspec} are
restated in terms of the renormalized Gegenbauer polynomials
$\widetilde{C}^\alpha_l$ as below:
\begin{eqnarray}
& & \frac{d}{dx}\widetilde{C}^{\alpha}_{2N}(x)=2(\alpha + N)\, \widetilde{C}^{\alpha +1}_{2N-1}(x), 
 \\
& & \frac{d}{dx}\widetilde{C}^{\alpha}_{2N+1}(x)=2\, \widetilde{C}^{\alpha +1}_{2N}(x), 
 \\
& & N \widetilde{C}_{2N}^{\alpha}(x)-(\alpha+N)\, x\widetilde{C}^{\alpha +1}_{2N-1}(x)
+ \widetilde{C}^{\alpha +1}_{2N -2}(x)=0, \label{eqn:C3even} \\
& & (2N+1)\, \widetilde{C}_{2N+1}^{\alpha}(x)-2\, x\widetilde{C}^{\alpha +1}_{2N}(x)
+ 2\,\widetilde{C}^{\alpha +1}_{2N -1}(x)=0. \label{eqn:C3odd}
\end{eqnarray}


\subsection*{Acknowledgments}

T. Kobayashi was partially supported by 
Institut des Hautes \'{E}tudes Scientifiques, France and
        Grant-in-Aid for Scientific Research (B) (22340026) and (A) (25247006), Japan
        Society for the Promotion of Science.  
P. Somberg and V. Sou\v cek acknowledge the financial support from the grant GA P201/12/G028.
 All four authors are grateful
 to the Max Planck Institut f\"ur Mathematik in Bonn,
where a part of the work was done.


 
  \end{document}